\newtheorem{theorem}{Theorem}[section]
\newtheorem{lemma}[theorem]{Lemma}
\newtheorem{corollary}[theorem]{Corollary}
\newtheorem{remark}[theorem]{Remark}
\newtheorem{example}[theorem]{Example}
\newtheorem{definition}[theorem]{Definition}
\newcommand{\R}{\mathbb{R}}
\newcommand{\Z}{\mathbb{Z}}
\newcommand{\N}{\mathbb{N}}
\newcommand{\bP}{\mathbb{P}}
\newcommand{\bQ}{\mathbb{Q}}
\newcommand{\bR}{\mathbb{R}}
\newcommand{\bS}{\mathbb{S}}
\newcommand{\bG}{\mathbb{G}}
\newcommand{\dps}{\displaystyle}
\newcommand{\ii}{\infty}
\newcommand\1{{\ensuremath {\mathds 1} }}
\renewcommand\phi{\varphi}
\newcommand{\gS}{\mathfrak{S}}
\newcommand{\wto}{\rightharpoonup}
\newcommand{\cS}{\mathcal{S}}
\newcommand{\cC}{\mathcal{C}}
\newcommand{\cX}{\mathcal{X}}
\newcommand{\cF}{\mathcal{F}}
\newcommand{\cD}{\mathcal{D}}
\newcommand{\cH}{\mathcal{H}}
\newcommand{\bc}{\mathbf{c}}
\newcommand{\bx}{\mathbf{x}}
\newcommand{\by}{\mathbf{y}}
\newcommand{\bPhi}{\mathbf{\Phi}}
\newcommand{\PiGC}{\Pi_{\rm GC}}
\newcommand{\norm}[1]{ \left\| #1 \right\|}
\renewcommand{\geq}{\geqslant}
\renewcommand{\leq}{\leqslant}
\renewcommand{\tilde}{\widetilde}
\newcommand{\eps}{\varepsilon}
\newcommand{\nn}{\nonumber}
\newcommand{\rd}{{\rm d}}
\title{Grand-Canonical Optimal Transport}
\author[S. Di Marino]{Simone Di Marino}
\address{Universit\`{a} di Genova, DIMA, MaLGa, Via Dodecaneso 35, 16146 Genova, Italy}
\email{simone.dimarino@unige.it}
\author[M. Lewin]{Mathieu Lewin}
\address{CNRS \& CEREMADE, Universit\'e Paris-Dauphine, PSL University, 75016 Paris, France}
\email{mathieu.lewin@math.cnrs.fr}
\author[L. Nenna]{Luca Nenna}
\address{Universit\'e Paris-Saclay, CNRS, Laboratoire de math\'ematiques d'Orsay, ParMA, Inria Saclay, 91405, Orsay, France. }
\email{luca.nenna@universite-paris-saclay.fr}
\date{\today}
\begin{document}

 \begin{abstract}
We study a generalization of the multi-marginal optimal transport problem, which has no fixed number of marginals $N$ and is inspired of statistical mechanics. It consists in optimizing a linear combination of the costs for all the possible $N$'s, while fixing a certain linear combination of the corresponding marginals.
\end{abstract}

 \maketitle

\setcounter{tocdepth}{1}
 \tableofcontents

\section{Introduction}

The theory of optimal transport plays an important role in many applications~\cite{Villani-03,Villani-09,Santambrogio-15,PeyCut-19}. Its generalization to the \emph{multi-marginal} case, where $N\geq3$ marginals are given instead of only two, has been particularly studied in the last years. The latter has applications in many areas including Economics~\cite{CarEke-10,ChiCanNes-10,Pass-14}, Financial Mathematics~\cite{BeiHenPen-13,DolSon-14,DolSon-14b,GalHenTou-14,HenTanTou-16,HenTou-16}, Statistics~\cite{BigKle-12_ppt,CarCheGal-16}, Image Processing~\cite{RabPeyBer-12}, Tomography~\cite{AbrAbrBerCar-17}, and Quantum Physics and Chemistry in the setting of the \emph{Strictly Correlated Electron} model in \emph{Density Functional Theory} (DFT)~\cite{Seidl-99,SeiPerLev-99,SeiGorSav-07,GorSei-10,ButPasGor-12,ColPasMar-15,CotFriKlu-13,CotFriPas-15,FriMenPasCotKlu-13,MenLin-13}.

In this paper, we study a further generalization where the number of marginals $N$ is not fixed. This model takes its roots in Statistical Physics, where it usually goes under the name \emph{Grand-Canonical}~\cite{Ruelle}. It has recently been used for Coulomb and Riesz costs where it naturally occurs in the large-$N$ limit of the multi-marginal problem~\cite{LewLieSei-18,CotPet-19,LewLieSei-20,LewLieSei-23_DFT}. A truncated version has been studied in \cite{BouButChaPas-21,BouButChaPas-21b}. Its entropic regularization is well known in the literature~\cite{ChaChaLie-84,ChaCha-84,JanKunTsa-22} and plays a central role in the density functional theory of inhomogeneous classical fluids at positive temperature~\cite{Evans-79,Evans-92}. An entropic grand canonical problem has also recently appeared in~\cite{BarLav-23}, where it is interpreted as a relative entropy minimization with respect to branching Brownian motion, in the framework of regularized unbalanced optimal transport. We will come back to the entropic model at the end of the paper in Section~\ref{sec:entropy}.

In short, the Grand-Canonical Optimal Transport (GC-OT) problem can be formulated in the Kantorovich form as follows
\begin{equation}
\cC(\rho):=\inf\left\{\sum_{n=0}^\ii \int_{\Omega^n}c_n\,{\rm d}\bP_n\  :\ \sum_{n=0}^\ii \bP_n(\Omega^n)=1,\quad \sum_{n=1}^\ii n\, \bP_n(\cdot,\Omega^{n-1})=\rho\right\}
\label{eq:GC-OT_intro}
\end{equation}
where each $\bP_n$ is a symmetric measure on $\Omega^n$, $c_n$ is the symmetric cost for the $n$-marginal problem and $\bP_n(\cdot,\Omega^{n-1})$ is the first marginal of $\bP_n$. Notice the factor $n$ multiplying the marginal in the constraint involving $\rho$, which accounts for the fact that there are $n$ equal such marginals since all the $\bP_n$ are symmetric. The family $\bP=(\bP_n)_{n\geq0}$ forms a probability which describes the behavior of some agents whose number is unknown or can vary. In this interpretation $\bP_n(\Omega^n)$ is the probability that there are $n$ agents and $\bP_0\in[0,1]$ is the one that there is no agent at all. In the GC-OT problem~\eqref{eq:GC-OT_intro} only the \emph{average} quantity $\rho$ is fixed and fluctuations of the number of agents are allowed. Solving the minimization problem $\cC(\rho)$ requires in particular to determine the best way to distribute the number of agents through the measures $\bP_n$, in order to reproduce the given average $\rho$, depending on the corresponding costs $\bc=(c_n)_{n\geq0}$.

The (symmetric) $N$-marginal problem corresponds to having $\rho(\Omega)=N\in\N$ and adding the constraint that all the $\bP_n$ are zero except $\bP_N$ (the usual optimal transport problem is for $N=2$). It is not true in general that this will be the solution to the grand-canonical problem~\eqref{eq:GC-OT_intro}. The optimum will largely depend on the choice of the costs $c_n$. If $\rho(\Omega)$ is not an integer, an optimum $\bP$ will necessarily involve at least two non-trivial $\bP_n$'s. One natural question is whether this is limited to $\bP_N$ and $\bP_{N+1}$ where $N$ is the unique integer so that $N<\rho(\Omega)<N+1$, or whether higher fluctuations are favorable.

Problems of the form~\eqref{eq:GC-OT_intro} appear for instance in the semi-classical approximation of Density Functional Theory~\cite{LewLieSei-23_DFT}, in which case $\Omega=\R^3$ and
\begin{equation}
c_0=c_1=0,\qquad c_n(x_1,...,x_n)=\sum_{1\leq j<k\leq n}\frac{1}{|x_j-x_k|}
 \label{eq:Coulomb_intro}
\end{equation}
is the pairwise Coulomb repulsion between $n$ electrons. The function $\rho$ is then interpreted as the total average density of electrons in the system. The integral $\int_{\R^3}\rho$ is the average number of electrons. Solving this problem is believed to give some information on the amount of correlations in the system, depending on the density $\rho$, which could then be used in the approximation of the more precise quantum problem.

Let us mention that the symmetric constraint on the measures $\bP_n$ does not prevent us from treating systems with different kinds of agents. If we want to transport a certain amount of croissants produced in bakeries to Parisian caf\'es as in~\cite[Chap.~3]{Villani-09}, then $\Omega$ will be a finite set containing the properties (precise location, size, etc) of both bakeries and  caf\'es. The symmetry just means that the croissants are all the same and we do not want to distinguish which one is sent where. This point of view is further discussed in Example~\ref{rmk:non-symmetric} below.

In this article, we discuss several mathematical properties of the GT-OT problem~\eqref{eq:GC-OT_intro}, within the framework of optimal transport theory.
The paper is organized as follows. In Section~\ref{sec:existence} we formulate the problem and show the existence of a minimizer $\bP=(\bP_n)_{n\geq0}$, under appropriate assumptions on the costs. In Section~\ref{sec:support} we focus on the case of pairwise costs and derive some properties on the \emph{support} of $\bP$, that is, on how many of the $\bP_n$'s are non zero. We particularly discuss the Coulomb case~\eqref{eq:Coulomb_intro}. We also study the truncated problem where all the $\bP_n$ are assumed to vanish after some $N_\text{max}$ and the convergence to the true problem when $N_\text{max}\to\ii$. This is useful for doing practical computations. Section~\ref{sec:duality} is devoted to duality theory and the existence of the dual potential. In Section~\ref{sec:1D} we study the 1D case, confirming thereby some predictions on the shape of the optimal plan made in \cite{MirSeiGor-13}. Finally, in Section~\ref{sec:entropy} we study the entropic regularization of the GC-OT problem.

\section{Main properties of Grand-Canonical Optimal Transport}\label{sec:existence}

\subsection{Subsystems are often grand-canonical}\label{sec:localization}

In order to motivate the problem, let us first explain why it is natural to let the number of marginals vary, even if we start with a system which has a well defined number of agents $N$. Consider a symmetric Borel probability measure $\mathscr{P}$ over $\Omega^N$, where $\Omega\subset\R^d$ is any given Borel set\footnote{We work in $\R^d$ to avoid general spaces and because this is the situation in the applications we have in mind. Everything holds the same in a more abstract setting.} and $N\geq2$ is typically a large number. The symmetry means here that
$$\mathscr{P}(A_{\sigma(1)}\times\cdots \times A_{\sigma(N)})=\mathscr{P}(A_1\times\cdots \times A_N)$$
for any permutation $\sigma\in\gS_N$ and any Borel sets $A_1,...,A_N\subset\Omega$. In applications, $\mathscr{P}$ describes $d$ properties of $N$ agents, taking their values in $\Omega$ (e.g. their space location along the $d$ coordinates in $\R^d$), with $\mathscr{P}(A_1\times\cdots \times A_N)$ being the probability that agent 1 is in $A_1$, agent 2 is in $A_2$, etc. The symmetry of $\mathscr{P}$ simply means that our agents are all identical and indistinguishable from one another.
When $N$ is very large it seems natural to allow it to vary a little, for instance to account for the fact that it can probably not be known exactly. However, grand-canonical states also occur naturally at fixed $N$ when we look at \emph{subsystems}. Let us explain this now.

Let us fix a subset $A\subset\Omega$. Imagine that we would like to ignore what is happening outside of $A$ and only concentrate on what the agents in $A$ are doing. Of course, although there are exactly $N$ agents in total, the number of agents in the subset $A$ can vary from 0 to $N$. A simple counting argument leads us to introducing the measures $\bP_n$ on $A^n$ defined by
\begin{equation}
\begin{cases}
\bP_0=\mathscr{P}\left((\Omega\setminus A)^N\right)&\text{for $n=0$,}\\
\bP_n(B_n)={N\choose n}\mathscr{P}\Big(B_n\times (\Omega\setminus A)^{N-n}\Big)&\text{for $1\leq n\leq N-1$,}\\
\bP_N(B_N)=\mathscr{P}\left(B_N\right)&\text{for $n=N$,}\\
0&\text{for $n\geq N+1$,}
       \end{cases}
 \label{eq:geometric}
\end{equation}
for any $B_n\subset A^n$. In other words, the conditional measure $\bP_n$ describes what $n$ agents are doing in $A$, in the situation that there are exactly $n$ agents in $A$ and $N-n$ outside. The combinatorial factor in the definition of $\bP_n$ is because in the expression it was assumed, by symmetry, that the $n$ first are in $A$ whereas the $N-n$ last ones are outside. Then $\bP_n(A^n)$ is the probability that there are exactly $n$ agents in $A$ and thus
$$\bP_0+\sum_{n=1}^N\bP_n(A^n)=1.$$
On the other hand, the average number of agents in $A$ is given by
$$\sum_{n=1}^Nn\,\bP_n(A^n)\in[0,N].$$
The corresponding density of agents in $A$ is the positive measure defined by
$$\rho_\bP(B):=\bP_1(B)+\sum_{n=2}^Nn\,\bP_n(B\times A^{n-1})$$
for every $B\subset A$. It is the sum of the marginals with the multiplication factor $n$ seen before. A calculation shows that
$$\rho_\bP(B):=N\mathscr{P}(B\times \Omega^{N-1}),$$
that is, $\rho_\bP$ is just the restriction of $N$ times the first marginal of $\mathscr{P}$ to the set $A$.
Our conclusion is that although the total number of agents $N$ can be fixed, it is never fixed as soon as we look at a subsystem, which is always represented by a grand-canonical probability $\bP$. Here we have $P_n\equiv0$ for $n\geq N+1$ because we start with $N$ agents in total. But if we let $N$ be arbitrarily large with $A$ fixed, then $\bP$ can in principle have infinitely many non-trivial $\bP_n$'s, as in~\eqref{eq:GC-OT_intro}.

The above construction is well-known in statistical mechanics, where the family $\bP=(\bP_0,...,\bP_N)$ is often called the \emph{localization of $\mathscr{P}$ to the set $A$}~\cite{Ruelle-67,RobRue-67,Ruelle,Lewin-11}. Note that there is definitely some loss of information when we look at this $\bP$ instead of the big probability $\mathscr{P}$ since everything which is happening outside of $A$ has been averaged over. Even if we would cover $\Omega$ with several $A$'s and look at the corresponding localized probabilities, we would in general not be able to reconstruct $\mathscr{P}$, since the correlation between the different domains is discarded. We believe that approximating a large multi-marginal problem by a collection of smaller local grand-canonical ones is a strategy which might be helpful in practice. This is in the spirit of embedding theories~\cite{SunCha-16,LinLin-21} used for large quantum systems.

\subsection{The Grand-Canonical Optimal Transport problem}

\subsubsection{Grand-canonical probabilities}
Let again $\Omega\subset \R^d$ be any Borel set. Consider any positive Borel measure $\rho$ such that $\rho(\Omega)<\ii$. The set of grand-canonical probabilities with density $\rho$ is denoted by
\begin{multline}
\PiGC(\rho):= \Big\{ \bP=(\bP_n)_{n\geq0}\ :\ \bP_0\in[0,1],\quad \bP_n \in\mathscr{M}_{\rm sym}(\Omega^n)\\
\bP_0+\sum_{n\geq1}\bP_n(\Omega^n)=1,\quad \rho_\bP=\rho\Big\}
\label{eq:GC_set}
\end{multline}
where $\rho_\bP$ is by definition the positive measure satisfying
$$\rho_\bP(B):=\bP_1(B)+\sum_{n\geq2}n\,\bP_n(B\times\Omega^{n-1}),$$
for every $B\subset\Omega$, that is, the sum of $n$ times the first marginal of the $\bP_n$'s. In~\eqref{eq:GC_set}, $\mathscr{M}_{\rm sym}(\Omega^n)$ denotes the set of finite positive symmetric Borel measures over $\Omega^n$. The following examples show that $\PiGC(\rho)$ is never empty.

\begin{example}[Usual multi-marginal probabilities]\label{ex:usual}
If $\rho(\Omega)=N\in\N$, then we can consider any $N$-agent symmetric probability $P_N$ of first marginal $\rho/N$, for instance
$$\bP_N=\left(\frac{\rho}{N}\right)^{\otimes N}$$
for independent agents, and take $\bP_n\equiv0$ for $n\neq N$. This is usually called a \emph{canonical probability} because the number of agents does not vary.

If $\rho(\Omega)=N+t$ with $N\in\N$ and $t\in(0,1)$ we need at least two non-vanishing $\bP_n$'s. We can for instance first write $\rho=(1-t)\rho_{N}+t\rho_{N+1}$ with $\rho_N(\Omega)=\rho_{N+1}(\Omega)-1=N$ and then take $\bP_N=t\bQ_N$ and $\bP_{N+1}=(1-t)\bQ_{N+1}$ with an obvious notation.
\end{example}

\begin{example}[Poisson states]\label{ex:Poisson}
For $\rho$ any positive measure over $\Omega$ with $\rho(\Omega)<\ii$ we define the associated \emph{Poisson grand-canonical probability} $\bG_\rho=(\bG_{\rho,n})_{n\geq0}\in\PiGC(\rho)$ by
$$\bG_{\rho,0}=e^{-\rho(\Omega)},\qquad \bG_{\rho,n}=e^{-\rho(\Omega)}\frac{\rho^{\otimes n}}{n!}. $$
This state has a Poisson distribution in the number $n$ of agents, which then behave independently for every $n$.
Such Poisson states can be obtained as the localization to $\Omega$ of $N$ i.i.d. agents in the limit $N \to \infty$. To see this, let us consider any point $x^*$ outside of $\Omega$ and the \emph{canonical} probability measure associated with $N\geq \rho(\Omega)$ independent agents distributed according to $\frac{\rho+(N-\rho(\Omega))\delta_{x^*}}N$, that is,
$$\mathscr{P}^N=\Bigl(\frac{\rho + (N-\rho(\Omega)) \delta_{x^*}}N \Bigr)^{\otimes N}.$$
If we localize this probability to $\Omega$ as we did in the previous section, we obtain the grand-canonical probability
$$ \bP^N_n=\begin{cases}
\Bigl( \frac{N-\rho(\Omega)}{N}\Bigr)^{N}&\text{for $n=0$,}\\
\frac{\rho^{\otimes n}}{N^n} \Bigl( \frac{N-\rho(\Omega)}{N}\Bigr)^{N-n} { {N} \choose {N-n}}&\text{for $1\leq n\leq N$,}\\
0&\text{for $n\geq N+1.$}
\end{cases}$$
In the limit $N\to\ii$, we have $\bP^N_n \to \bG_{\rho,n}$, in total variation norm.
\end{example}

Next we discuss the corresponding Grand-Canonical Optimal Transport (GC-OT) problem, where we minimize some total average cost at fixed density $\rho_\bP=\rho$. We assume for simplicity that we have a certain cost $c_n$ for each number of agent $n$, that is, different numbers $n$ are not coupled to one another. We therefore consider a family $\bc=(c_n)_{n\geq0}$ with $c_0\in\R$ and $c_n$ a symmetric lower semi-continuous bounded-below function over $\Omega^n$. The minimization problem reads
\begin{equation}
\boxed{\cC(\rho):=\inf_{\bP\in\PiGC(\rho)}\left\{c_0\bP_0+\sum_{n\geq1}\int_{\Omega^n}c_n\,{\rm d}\bP_n\right\}.}
\label{eq:GC_OT}
\end{equation}
In the following we will often use the shorthand notation
\begin{equation}
\bP(\bc):=c_0\bP_0+\sum_{n\geq1}\int_{\Omega^n}c_n\,{\rm d}\bP_n
\label{eq:average_cost}
\end{equation}
for the average cost of a grand-canonical probability $\bP$.

Although one can allow any possible costs $c_n$ for a mathematical exercise, in practice the $c_n$ are often related with one another. A large part of the paper will be devoted to costs describing \emph{pairwise interactions between agents}, where all the $c_n$ are expressed in terms of the two-agent cost $c_2$ only.

\begin{definition}[Pairwise costs]
A \emph{pairwise grand-canonical cost} takes the form
\begin{equation}
 c_0=c_1=0,\qquad c_n(x_1,...,x_n)=\sum_{1\leq j<k\leq n}c_2(x_j,x_k)\quad\text{for $n\geq3$.}
 \label{eq:pairwise}
\end{equation}
\end{definition}

Such costs are ubiquitous in applications, but many other possibilities can of course be examined. For instance, any fixed $k$-agent cost $c_k$ induces a family of costs $c_n=\sum_{1\leq j_1<\cdots <j_k\leq n}c_k(x_{j_1},...,x_{j_k})$ for $n\geq k+1$ in the same manner. On the other hand, a cost of the form
$c_n(x_1,...,x_n)=\sum_{j=1}^nc_1(x_j)$
is of no interest in our setting since then the total average cost equals
\begin{equation}
 \sum_{n\geq1}\int_{\Omega^n}c_n\;{\rm d}\bP_n=\int_\Omega c_1\,{\rm d}\rho_\bP=\int_\Omega c_1\,{\rm d}\rho
 \label{eq:one-particle_cost}
\end{equation}
for $\bP\in\PiGC(\rho)$ and there is nothing left to optimize over.

\begin{example}[Interacting classical particles]
For systems of interacting classical particles~\cite{Ruelle} $c_n$ is of the form~\eqref{eq:pairwise} with, usually, a translation-invariant $c_2(x,y)=w(x-y)$. The probability $\bP$ represents the spatial location of the particles in $\Omega\subset\R^d$, $w$ is their pairwise interaction potential and $\rho_\bP$ is their average spatial density. In a gas of neutral atoms typical interactions are strongly repulsive at the origin and attractive with a fast decay at infinity, like for the Lennard-Jones potential
\begin{equation}
c_2(x,y)=\frac{A}{|x-y|^a}-\frac{B}{|x-y|^b},\qquad A,B>0,\qquad a>b>d.
\label{eq:Lennard-Jones}
\end{equation}
In Coulomb gases the particles are charged like electrons and then
\begin{equation}
c_2(x,y)=\begin{cases}
|x-y|^{2-d}&\text{for $d\geq3$,}\\
-\log|x-y|&\text{for $d=2$,}\\
-|x-y|&\text{for $d=1$.}
\end{cases}
\label{eq:Coulomb}
\end{equation}
The corresponding grand-canonical problem was studied in~\cite{LewLieSei-18,LewLieSei-20}. Riesz gases form a larger family of interactions comprising the Coulomb cost, defined by
\begin{equation}
c_2(x,y)=\begin{cases}
|x-y|^{-s}&\text{for $s>0$,}\\
-\log|x-y|&\text{for $s=0$,}\\
-|x-y|^{|s|}&\text{for $s<0$.}
\end{cases}
\label{eq:Riesz}
\end{equation}
The GC-OT for Riesz costs is used in~\cite{LewLieSei-18,CotPet-19,CotPet-19b,Lewin-22}.
\end{example}

\begin{remark}[Sub-additivity]
For any pairwise cost in the form~\eqref{eq:pairwise}, we have the inequality
\begin{equation}
 \cC(\rho_1+\rho_2)\leq \cC(\rho_1)+\cC(\rho_2)+\iint_{\Omega\times\Omega}c_2(x,y)\,\rd \rho_1(x)\,\rd \rho_2(y)
 \label{eq:subadditive}
\end{equation}
which is shown in~\cite[Eq.~(3.2)]{LewLieSei-18}. In other words
$$\rho\mapsto \cC(\rho)-\frac12\iint_{\Omega\times\Omega}c_2(x,y)\rd \rho(x)\,\rd \rho(y)$$
is sub-additive (on the set of $\rho$'s for which the two terms make sense).
\end{remark}

\begin{example}[Triviality of the pairwise harmonic cost]\label{ex:harmonic2}
The pairwise harmonic cost
$$c_0=c_1=0,\qquad c_n(x_1,...,x_n)=\sum_{1\leq j<k\leq n}|x_j-x_k|^2$$
is not interesting in our context because of symmetry. We obtain $\cC(\rho)=0$ for any $\rho$, with possible optimizers given by
$$\bP_0=1-\frac{\rho(\Omega)}{N},\qquad \bP_N=\frac1N\int_{\Omega} (\delta_y)^{\otimes N}\,{\rm d}\rho(y),\qquad \rho(\Omega)\leq N\in\N$$
or any convex combination of those. The same holds for any non-negative pairwise cost $c_2$ which vanishes on the diagonal.
\end{example}

\begin{example}[Mapping two densities with different masses]\label{rmk:non-symmetric}
The grand-canonical formalism allows us to transport a density $\rho_1$ to another $\rho_2$, with possibly different masses. The idea is just to work in $\Omega\times\{1,2\}$ instead of $\Omega$ and to choose $\rho(x,\sigma)=\rho_\sigma(x)$. The interpretation is that we have two different populations with the members in each groups being indistinguishable from each other, as is often the case in applications. The number of agents of each type can vary when we transport the density $\rho_1$ of the first population onto the second density $\rho_2$. Each optimal measure $\bP_N(x_1,\sigma_1,...,x_N,\sigma_N)$ describes $N_1=\sum_{j=1}^N(2-\sigma_j)$ agents of the first type which are transported to $N_2=\sum_{j=1}^N(\sigma_j-1)$ of the second. This seems a natural model which we will further investigate in future work. Note that the harmonic cost is again trivial here, by the same argument as in Example~\ref{ex:harmonic2}. The entropic regularization of this model was already considered in~\cite[Sec.~5]{ChaChaLie-84}. For other approaches like the unbalanced optimal transport problem, see~\cite{KanRub-58,ChiPeySchVia-18,LieMieSav-18,LieMieSav-16}.
\end{example}

\subsubsection{Well-posedness and existence of optimizers}
The well-posedness of the grand-canonical problem~\eqref{eq:GC_OT} requires specific assumptions on $c_n$ to avoid a collapse due to the possibility of having infinitely many agents in the system.\footnote{The situation is much easier if we add the constraint (or know in advance) that $\bP_n\equiv0$ for $n$ larger than some $N$.} This is a well known problem mentioned for instance in~\cite{ChaChaLie-84}. Note also that the problem could be well posed for some well behaved density $\rho$ (e.g.~ with compact support) and not for other ones. Here we will look for assumptions which work for all $\rho$'s. The following is inspired of statistical mechanics~\cite{Ruelle}.

\begin{definition}[Stability]
Let $\Omega\subset\R^d$. We say that the family of symmetric costs $\bc=(c_n)_{n\geq0}$ is \emph{stable} whenever there exist two constants $A,B\geq0$ such that
$c_n\geq -A-Bn$
on $\Omega^n$ for all $n\geq0$.
\end{definition}

Stability implies that
\begin{equation}
\bP(\bc)= c_0\bP_0+\sum_{n\geq1}\int_{\Omega^n}c_n\,{\rm d}\bP_n\geq -A-B\rho_\bP(\Omega)
 \label{eq:stable_lower_bound}
\end{equation}
and hence
$\cC(\rho)\geq -A-B\rho(\Omega)>-\ii.$
This makes $\cC(\rho)$ a well defined minimization problem, which is manifestly convex in $\rho$. The set of finite measures $\rho$ such that $\cC(\rho)<\ii$ is also convex.

In the case of pairwise costs as in~\eqref{eq:pairwise}, understanding the condition of stability in terms of the generating cost $c_2$ is a famous problem in statistical mechanics. Stable systems include for instance the case of
\begin{itemize}
 \item positive pairwise costs ($c_2\geq0$)
 \item positive-definite translation-invariant pairwise costs (that is, $c_2(x,y)=w(x-y)$ with $\widehat{w}\geq0$ and $\widehat{w}$ continuous at the origin)
 \item any convex combination of these two.
\end{itemize}
For instance, the Lennard-Jones-type potentials~\eqref{eq:Lennard-Jones} are always stable thanks to the strong repulsion close to the origin and the sufficiently fast decay at infinity~\cite{Dobrushin-64,FisRue-66}. On the other hand a cost satisfying $c_2(0,0)<0$ is never stable. Indeed, when $c_2$ is a continuous function, stability for the corresponding family~\eqref{eq:pairwise} is actually equivalent to the property that
\begin{equation}
\iint_{\Omega\times\Omega}c_2(x,y)\,{\rm d}\rho(x)\,{\rm d}\rho(y)\geq0\quad\text{for every finite measure $\rho\geq0$.}
\label{eq:cond_stability}
\end{equation}
Taking $\rho$ a Dirac delta gives $c_2(0,0)\geq0$. To prove the equivalence, we choose such a $\rho$ and an $\ell>0$ and we plug the Poisson state from Example~\ref{ex:Poisson} with density $\ell\rho$ into~\eqref{eq:stable_lower_bound}. After a calculation we find
$$\frac{\ell^2}2\iint_{\Omega\times\Omega}c_2(x,y)\,{\rm d}\rho(x)\,{\rm d}\rho(y)\geq -A-B\ell\rho(\Omega)$$
and this gives~\eqref{eq:cond_stability} after taking $\ell\to\ii$. Conversely, starting from~\eqref{eq:cond_stability} we can obtain the condition in the definition with $A=0$ and $B=-c_2(0,0)/2$ after taking $\rho=\sum_{j=1}^n\delta_{x_j}$.

\begin{example}[Unstability of the repulsive pairwise harmonic cost]\label{ex:harmonic}
The repulsive pairwise harmonic cost corresponds to $c_2(x,y)=-|x-y|^2$ on $\Omega=\R^d$, that is, $c_0=c_1=0$ and
\begin{equation}
c_n(x_1,...,x_n)=-\sum_{1\leq j<k\leq n}|x_j-x_k|^2=-n\sum_{j=1}^n|x_j|^2+\left|\sum_{j=1}^nx_j\right|^2.
\label{eq:harmonic_cost}
\end{equation}
In the $n$-marginal case studied for instance in~\cite{GamSwi-98,MarGerNen-17}, the first term depends on the density and it can safely be ignored. The problem is then the same as taking the attractive harmonic cost $|\sum_{j=1}^nx_j|^2$ for the center of mass of the $x_j$ (see Section~\ref{sec:center_mass} for more about this cost). In the grand canonical case the factor $n$ in the first term on the right side of~\eqref{eq:harmonic_cost} has an average against $\bP$ which does \emph{not} depend only on $\rho_\bP$. Thus the problem is \emph{not} equivalent to the harmonic cost for the center of mass. In fact, the first term makes the system unstable and we have $\cC(\rho)=-\ii$ for any $\rho$ which is supported on two points or more (and $\cC(\rho)=0$ otherwise). Consider for instance the trial state $\bP$ defined by
\begin{equation}
\bP_0=1-\frac{\rho(\Omega)}{N},\quad \bP_N=\frac{\rho(\Omega)}{N}\,\left(\frac{\rho}{\rho(\Omega)}\right)^{\otimes N},\quad  \bP_n\equiv0\quad \forall n\notin\{0,N\}
\label{eq:stupid_bP}
\end{equation}
for $N\geq \rho(\Omega)$. A calculation shows that
$$\bP(\bc)=-\frac{N-1}{2\rho(\Omega)}\iint|x-y|^2{\rm d}\rho(x)\,{\rm d}\rho(y)\underset{N\to\ii}{\longrightarrow}-\ii.$$
\end{example}

\bigskip

Although stability is a good condition for $\cC(\rho)$ to be well defined for all finite measures $\rho$, it is not sufficient to obtain the existence of optimizers. The following rather artificial examples illustrate the kind of problems which can arise from large numbers of agents with small probabilities.

\begin{example}[No-agent cost]\label{ex:artificial}
Take a cost which only favors the case with no agent: $c_0=-1$ and $c_n\equiv0$ for $n\geq1$.
Then $\cC(\rho)=-1$ for every $\rho$ but it is never attained for $\rho\neq0$.
Indeed, the average cost equals $\bP(\bc)=-\bP_0\geq-1$. This is $>-1$ as soon as one $\bP_n$ is non-zero, which ought to be the case when $\rho\neq0$. To prove that $\cC(\rho)=-1$ we can use the grand-canonical probability $\bP$ introduced in~\eqref{eq:stupid_bP}. Then we have $\rho_\bP=\rho$ as required and the average cost is
$\bP(\bc)={\rho(\Omega)}/{N}-1\to-1.$
\end{example}

\begin{example}[Too small costs]\label{ex:artificial2}
A different example is when the costs are positive but not large enough for $n\gg1$, e.g., $c_0=0$ and $c_n>0$ with $\norm{c_n}_{L^\ii(\Omega^n)}=o(n)$.
Using the same probability $\bP$ as in~\eqref{eq:stupid_bP} we see that
$0< \cC(\rho)\leq \rho(\Omega)\|c_N\|_{\ii}/N\to 0.$
Hence $\cC(\rho)=0$ is never attained for $\rho\neq0$.
\end{example}

The previous example shows that the cost $c_n$ should be at least as large as $n$ for some configurations to hope to have minimizers. Note that for a pairwise cost as in~\eqref{eq:pairwise} with $c_2(0,0)>0$ then we have
$c_n(0,...,0)=\frac{n(n-1)}{2}c_2(0,0)$
which blows up like $n^2$. The following remedy is the adaptation of another classical concept in statistical mechanics~\cite{Ruelle-70}.

\begin{definition}[Super-stability]
We say that the family of costs $\bc=(c_n)_{n\geq0}$ is \emph{super-stable} if it is stable and if for any compact set $K\subset\R^d$, there exists $\eps_K>0$ and $n_K\in\N$ such that
\begin{equation}
 c_n(x_1,...,x_n)\geq -\frac{n}{\eps_K}+\eps_K\left(\sum_{j=1}^n\1_{\Omega\cap K}(x_j)\right)^2\quad\text{on $\Omega^n$ for all $n\geq n_K$.}
 \label{eq:superstable}
\end{equation}
\end{definition}

The condition~\eqref{eq:superstable} requires that the cost $c_n$ blows up quadratically in terms of the number of agents in any fixed domain $K$. Placing all the agents in $K$, then we deduce in particular that $\|c_n\|_{L^\ii}\geq \eps_K n^2-n/\eps_K\gg n$. The usual definition of statistical mechanics~\cite{Ruelle-70} uses a partition of the space into cubes $\R^d=\cup_{z\in\Z^d} C_z$ and the condition
$$c_n(x_1,...,x_n)\geq -Bn+\eps\sum_{z\in\Z^d}\left(\sum_{j=1}^n\1_{\Omega\cap C_z}(x_j)\right)^2,\qquad\forall n\geq1,$$
which is stronger when $\Omega$ is unbounded. This is a more global condition which provides a better uniform control. In some sense our definition~\eqref{eq:superstable} is more local since all the constants can depend on $K$. This is sufficient in our setting where the total density is anyway fixed, hence there will never be too many agents in average far away. Note also that the power 2 in~\eqref{eq:superstable} is for convenience. Any power strictly larger than 1 will do.

A two-agent cost satisfying $c_2(x,y)\geq c\1(|x-y|\leq \eps)$ with $\eps,c>0$ provides a superstable pairwise family $\bc$ through~\eqref{eq:pairwise}.
Hence when $c_2=c_2'+c_2''$ with $c_2'(x,y)\geq c\1(|x-y|\leq \eps)$ and $c_2''$ stable the corresponding $\bc$ is super-stable. This includes for instance Lennard-Jones potentials as in~\eqref{eq:Lennard-Jones} and Riesz costs~\eqref{eq:Riesz} in any dimension $d\geq3$, for $s>0$ and $\Omega=\R^d$.


\begin{theorem}[Existence of optimizers]\label{thm:existence}
Let $\Omega\subset\R^d$ be any Borel set. Let $\bc=(c_n)_{n \geq 0} $ be a superstable family of lower semi-continuous costs. Then any finite $\cC(\rho)$ admits a minimizer $\bP^*$. Moreover $\rho\mapsto \cC(\rho)$ is convex and lower semi-continuous for the tight convergence of measures.\end{theorem}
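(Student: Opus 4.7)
The plan is the direct method of the calculus of variations. Let $\bP^{(k)}=(\bP_n^{(k)})_{n\geq0}$ be a minimizing sequence in $\PiGC(\rho)$ for the (assumed finite) value $\cC(\rho)$. For each fixed $n$, the marginal domination $n\,\bP_n^{(k)}(\cdot,\Omega^{n-1})\leq\rho$ together with symmetry and a union bound yields
$$\bP_n^{(k)}(\Omega^n\setminus K^n)\leq n\,\bP_n^{(k)}((\Omega\setminus K)\times\Omega^{n-1})\leq\rho(\Omega\setminus K),$$
which is uniformly small in $k$ for $K\subset\Omega$ a large compact, by tightness of the finite measure $\rho$. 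Together with $\bP_n^{(k)}(\Omega^n)\leq\rho(\Omega)/n$, Prokhorov and a diagonal extraction produce a subsequence (not relabeled) such that $\bP_n^{(k)}\to\bP_n^*$ narrowly for every $n\geq0$, and $\bP_0^{(k)}\to\bP_0^*\in[0,1]$.

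Next I verify that $\bP^*\in\PiGC(\rho)$. The Markov-type tail bound $\sum_{n>N}\bP_n^{(k)}(\Omega^n)\leq\rho(\Omega)/(N+1)$, immediate from $\sum_n n\bP_n^{(k)}(\Omega^n)=\rho(\Omega)$, combined with the mass-preserving narrow convergence $\bP_n^{(k)}(\Omega^n)\to\bP_n^*(\Omega^n)$, forces $\sum_n\bP_n^*(\Omega^n)=1$. Fatou immediately yields $\rho_{\bP^*}\leq\rho$; the reverse inequality is where super-stability enters. Since $\bP^{(k)}(\bc)$ is bounded and $\sum_n n\bP_n^{(k)}(\Omega^n)=\rho(\Omega)$, applying super-stability on a compact $K$ gives
$$\sum_{n\geq n_K}\int_{\Omega^n}\Big(\sum_{j=1}^n\1_K(x_j)\Big)^2 d\bP_n^{(k)}\leq C_K$$
uniformly in $k$. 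Writing $m:=\sum_j\1_K(x_j)$, Cauchy-Schwarz in the integral (giving $\int m\,d\bP_n\leq\sqrt{\int m^2 d\bP_n}\sqrt{\bP_n(\Omega^n)}$) and in the sum over $n$ yields
$$\sum_{n>N}n\,\bP_n^{(k)}(K\times\Omega^{n-1})=\sum_{n>N}\int m\,d\bP_n^{(k)}\leq\sqrt{C_K\,\rho(\Omega)/(N+1)}\xrightarrow[N\to\infty]{}0$$
uniformly in $k$. Together with $\sum_n n\,\bP_n^{(k)}((\Omega\setminus K)\times\Omega^{n-1})=\rho(\Omega\setminus K)$, made small by choosing $K$ large, this gives the uniform tail control in $n$ needed to exchange limit and sum in $\int\phi\,d\rho_{\bP^{(k)}}=\sum_n n\int\phi(x_1)\,d\bP_n^{(k)}$ for any bounded continuous $\phi$; passing to the limit gives $\rho_{\bP^*}=\rho$.

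Lower semi-continuity of $\bP\mapsto\bP(\bc)$ along the sequence is obtained by the shift $\tilde c_n:=c_n+A+Bn\geq0$ (stability): each $\tilde c_n$ is non-negative and lsc, so Portmanteau gives $\liminf_k\int\tilde c_n d\bP_n^{(k)}\geq\int\tilde c_n d\bP_n^*$ for each $n$, and Fatou's lemma in $n$ gives the sum version. The affine offsets $A+Bn$ contribute $A+B\rho(\Omega)$ on both sides and cancel, so $\bP^*(\bc)\leq\liminf_k\bP^{(k)}(\bc)=\cC(\rho)$, proving $\bP^*$ is a minimizer. Convexity of $\cC$ is immediate from linearity of $\bP\mapsto\bP(\bc)$ together with $\lambda\bP_1+(1-\lambda)\bP_2\in\PiGC(\lambda\rho_1+(1-\lambda)\rho_2)$ whenever $\bP_i\in\PiGC(\rho_i)$. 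Finally, lsc of $\cC$ under tight convergence $\rho_k\to\rho$ follows from the same compactness argument applied to approximate minimizers $\bP^{(k)}\in\PiGC(\rho_k)$: uniform tightness of the convergent family $\{\rho_k\}$ provides the spatial tightness of the $\bP_n^{(k)}$, and $\sup_k\rho_k(\Omega)<\infty$ gives the Markov-type tail bound in $n$.

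The main obstacle is the passage to the limit in the density constraint: under mere stability, mass could concentrate at $n=\infty$ and produce $\rho_{\bP^*}<\rho$, as the artificial examples preceding the theorem show. Super-stability is precisely the tool that rules this out, via the uniform second-moment bound on the number of agents in any compact, which in combination with Cauchy-Schwarz yields uniform tail control in $n$.
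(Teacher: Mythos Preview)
Your proof is correct and follows essentially the same approach as the paper's: tightness of each $\bP_n^{(k)}$ from marginal domination, diagonal extraction, the Markov tail bound in $n$ for the probability constraint, super-stability combined with Cauchy--Schwarz for the density constraint, and the shift $\tilde c_n=c_n+A+Bn\geq0$ plus Fatou for lower semi-continuity of the cost. The only cosmetic differences are that the paper uses Jensen's inequality in place of your first Cauchy--Schwarz step (these are equivalent here) and works directly on compacts $K$ rather than testing against bounded continuous $\phi$; the resulting tail estimate $\sum_{n>N}\rho_{\bP_n^{(k)}}(K)\leq C_K'/\sqrt{N}$ is the same.
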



\begin{proof}
After extending all our probabilities by zero we may assume that $\Omega=\R^d$.
Let $\bP^k= ( \bP^k_n )_{n \geq 0}$ be a minimizing sequence. Then, since $\rho_{\bP^k_n} \leq \rho$, we see that for every $n \geq0 $ the sequence $\{\bP^k_n\}_{k \geq 1}$ is tight. By a diagonal argument, we can assume after extracting a subsequence that $\bP^k_n \stackrel{*}{\rightharpoonup} \bP^*_n$ narrowly (that is, in duality with bounded continuous functions) for every $n$. 

First we show that $\bP^* \in \PiGC(\rho)$. We set $\lambda^k_n:= \bP^k_n( \R^{dn})$ and $\lambda^*_n:=\bP^*_n( \R^{dn})$. Since $\bP^k_n \in \Pi_{GC}(\rho)$, we have $ \sum_{n = 0}^{\infty} \lambda^k_n = 1$ and  $\sum_{n = 0}^{\infty} n\lambda^k_n = \rho(\R^d)$.
Since $\bP^k_n \stackrel{*}{\rightharpoonup} \bP^*_n$ narrowly, we have $\lambda^k_n \to \lambda^*_n$. The weights $n$ imply that $(\lambda^k_n)_n$ converges strongly in $\ell^1$, and hence $ \sum_{n = 0}^{\infty} \lambda^*_n = 1$.
Thus $\bP^*$ is a grand-canonical probability and it only remains to show that $\rho_{\bP^*}=\rho$. This is where the super-stability is useful. We find $\rho_{\bP^*}\leq\rho$ by passing to the weak limit but have to show equality (think again of our example~\eqref{eq:stupid_bP} which has the limiting $\bP^*=(1,0,...)$ and $\rho_{\bP^*}=0$). We estimate the tail of the sum using the superstability of $(c_n)_{n \geq 0}$ and the finiteness of the average cost. We fix a compact set $K$ and we know that for $k$ large enough and $M\geq n_K$
\begin{align*}
2 \cC(\rho)  \geq \bP^k(\bc) & \geq -A-C_K\rho(\R^d) + \eps_K\sum_{n \geq M} \int \left(\sum_{j=1}^n\1_{K\cap\Omega}(x_j)\right)^2 \, {\rm d} \bP_n^k  \\
  & \geq -A-C_K\rho(\R^d) + \eps_K\sum_{\substack{n \geq M\\ \bP^k_n\neq0}}  \frac1{\lambda_n^k}\left(\int\bigg(\sum_{j=1}^n\1_{K\cap\Omega}(x_j)\bigg) \, {\rm d} \bP_n^k \right)^2 \\
 & =-A-C_K\rho(\R^d) + \eps_K\sum_{\substack{n \geq M\\ \bP^k_n\neq0}}  \frac{\rho_{\bP_n^k}(K)^2}{\lambda_n^k} \\
 & \geq -A-C_K\rho(\R^d) + \eps_K\frac{ \left(\sum_{n \geq M} \rho_{\bP_n^k}(K) \right)^2 }{\sum_{n \geq M} \lambda_n^k},
\end{align*}
with $C_K=\max(B,1/\eps_K)$. In the second line we have used Jensen's inequality. Using now the inequality $\sum_{n \geq M} \lambda_n^k \leq \rho (\R^d)/ M$ this proves that
\begin{equation}\label{eqn:estiss}
\sum_{n \geq M} \rho_{\bP_n^k}(K) \leq  \frac{C'_K}{\sqrt{M}}
\end{equation}
for some constant $C'_K$. Thus we obtain from the tightness of each of the $\rho_{\bP^k_n}$
\begin{align*}
\rho_{\bP^*} (K) \geq \sum_{n=0}^{M-1} \rho_{\bP^*_n} (K) &= \limsup_{k\to \infty} \sum_{n=0}^{M-1} \rho_{\bP^k_n} (K)  \\
& =\limsup_{k\to \infty} \left(\rho(K) - \sum_{n\geq M} \rho_{\bP^k_n} (K)\right) \geq \rho(K) - \frac{C'_K}{\sqrt{M}}.
\end{align*}
Letting $M \to \infty$ and using $\rho_{\bP^*}\leq\rho$, we obtain that $\rho_{\bP^*}$ and $\rho$ agree on $K$. Since this holds for every $K$ they are equal everywhere.

Next we show that the energy of $\bP^*$ is optimal. It is sufficient to prove that $\bP \mapsto \bP(\bc)$ is lower semicontinuous over $\PiGC(\rho)$, for the weak convergence used above in the proof. Since $(c_n)_{n \geq 0}$ is stable we can consider $\tilde{c}_n = c_n +n B+A\geq0$ which is also l.s.c. Noticing that $\bP(\widetilde{\bc})=\bP(\bc)+B\rho(\R^d)+A$ for all $\bP \in \PiGC(\rho)$,
we see that it suffices to prove the lower semi-continuity for $\widetilde{\bc}$. But, for each $n$, $\bP_n\mapsto \bP_n(\widetilde{c}_n)$  is lower semi-continuous with respect to the narrow convergence and the sum is also lower semi-continuous by Fatou's lemma in $\ell^1$. The lower semi-continuity of $\cC(\rho)$ for the narrow convergence of densities is proved by following step by step the previous arguments.
\end{proof}

In general $\cC(\rho)$ is \emph{not} lower semi-continuous for the weak convergence of measures (without the tightness condition).

\begin{example}[Non weakly lower semi-continuous]
Consider a pairwise cost of the form~\eqref{eq:pairwise} with $c_2(x,y)=A|x-y|^{-a}-B|x-y|^{-b}$ a Lennard-Jones potential with $a>b>d$ and $A,B>0$. Let $\rho_1\in C^\ii_c(\R^d,\R_+)$ and $\rho_2=\delta_{R_1}+\delta_{R_2}$ with $R_1$ and $R_2$ chosen so that $c_2(R_1,R_2)<0$. Then $\cC(\rho_2)\leq c_2(R_1,R_2)<0$ and one can even prove that $\cC(\rho_2)= c_2(R_1,R_2)$. Let $\rho_n:=\rho_1+\rho_2(\cdot-n\tau)$ with $\tau\neq0$ a fixed vector. Then $\rho_n\wto\rho_1$ vaguely (that is, in duality with continuous functions tending to 0 at infinity) but it is not tight. By~\eqref{eq:subadditive} we have
$\limsup_{n\to\ii}\cC(\rho_n)\leq \cC(\rho_1)+\cC(\rho_2)<\cC(\rho_1)$
(the first inequality is actually an equality). Hence $\cC$ is not wlsc for the weak topology in duality with $C_0(\R^d)$.
\end{example}

Lower semi-continuity for non-tight sequences holds under the additional condition that the cost is increasing with $n$, in an appropriate sense.

\begin{theorem}[Weak lower semi-continuity]\label{thm:wlsc}
Let $\Omega\subset\R^d$ be a Borel set. Let $\bc=(c_n)_{n \geq 0} $ be a superstable family of lower semi-continuous costs, with
\begin{equation}
c_n(x_1,...,x_n)\geq c_{n-1}(x_1,...,x_{n-1}),\qquad\forall x_1,...,x_n\in\Omega
\label{eq:condition_wlsc}
\end{equation}
for every $n\geq1$.
Then $\rho\mapsto \cC(\rho)$ is weakly lower semi-continuous for the convergence in duality with $C_0(\R^d)$.
\end{theorem}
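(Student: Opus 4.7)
The plan is to reduce the statement to the tight-convergence case already established in Theorem~\ref{thm:existence}, by restricting the density to compact sets and exploiting monotonicity~\eqref{eq:condition_wlsc} to control the discarded ``tails''.

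My first step will be to set up a \emph{localization inequality}. Given any $\bP=(\bP_m)_{m\geq0}\in\PiGC(\mu)$ and a Borel subset $K\subset\Omega$, I define the grand-canonical analogue $\cL_K\bP=(\bQ_n)_{n\geq0}$ of the localization construction from Section~\ref{sec:localization} by
\begin{equation*}
\bQ_0=\bP_0+\sum_{m\geq1}\bP_m\bigl((\Omega\setminus K)^m\bigr),\qquad \bQ_n(B)=\sum_{m\geq n}\binom{m}{n}\bP_m\bigl(B\times(\Omega\setminus K)^{m-n}\bigr)
\end{equation*}
for $B\subset K^n$. A routine computation, exactly as in Section~\ref{sec:localization}, shows that $\cL_K\bP\in\PiGC(\mu|_K)$ and that its energy equals
\begin{equation*}
(\cL_K\bP)(\bc)=c_0\bP_0+\sum_{m\geq1}\int_{\Omega^m}c_{n(\bx,K)}\bigl((x_i)_{x_i\in K}\bigr)\,\rd\bP_m(\bx),
\end{equation*}
where $n(\bx,K)=\#\{i:x_i\in K\}$. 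Using the symmetry of $c_m$ together with~\eqref{eq:condition_wlsc} iterated $m-n(\bx,K)$ times to drop the variables lying outside $K$ one by one, the integrand is pointwise dominated by $c_m(\bx)$, which gives the key inequality $(\cL_K\bP)(\bc)\leq \bP(\bc)$.

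Second, let $\rho_k\weakto\rho$ in duality with $C_0(\R^d)$. The Banach--Steinhaus theorem furnishes $\sup_k\rho_k(\R^d)<\ii$. Passing to a subsequence, I may assume $\cC(\rho_k)\to \liminf_k\cC(\rho_k)$ and that this limit is finite (else there is nothing to prove); Theorem~\ref{thm:existence} then yields a minimizer $\bP^k$ for each $\rho_k$. Fix $R>0$ with $\rho(\partial B(0,R))=0$, available for all but countably many $R$, and set $K=\overline{B(0,R)}$. The Portmanteau theorem then gives $\rho_k|_K\weakto \rho|_K$ tightly on the compact set $K$ (with $\sup_k \rho_k(K)<\ii$). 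Combining the localization inequality with Theorem~\ref{thm:existence} applied on $K$,
\begin{equation*}
\cC(\rho|_K)\leq \liminf_k\cC(\rho_k|_K)\leq \liminf_k(\cL_K\bP^k)(\bc)\leq \liminf_k\bP^k(\bc)=\liminf_k\cC(\rho_k).
\end{equation*}

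Finally, letting $R\to\ii$ along a sequence avoiding the (at most countable) $\rho$-charged spheres, one has $\rho|_K\to\rho$ in total variation and hence tightly, so a last application of Theorem~\ref{thm:existence} yields $\cC(\rho)\leq \liminf_R\cC(\rho|_K)\leq \liminf_k\cC(\rho_k)$, which is the desired lower semi-continuity. The main obstacle will be the localization inequality itself: the combinatorial identity defining $\cL_K\bP$ requires the full symmetry of each $\bP_m$, and the energy comparison relies on iterating~\eqref{eq:condition_wlsc} up to $m-n(\bx,K)$ times per configuration, an argument which would fail without the monotonicity hypothesis (as the counterexample preceding the statement shows).
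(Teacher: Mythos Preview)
Your proof is correct and follows essentially the same route as the paper: both arguments introduce the grand-canonical localization $\cL_K\bP$ (the paper calls it $\bP_{|B_R}$), use the monotonicity~\eqref{eq:condition_wlsc} to obtain $(\cL_K\bP)(\bc)\leq\bP(\bc)$ and hence $\cC(\rho|_K)\leq\cC(\rho)$, then reduce to the tight-convergence lower semi-continuity of Theorem~\ref{thm:existence} on compact balls before letting $R\to\infty$. Your version is slightly more explicit about the Portmanteau step (choosing $R$ with $\rho(\partial B_R)=0$), which the paper leaves implicit.
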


By symmetry, any other set of $n-1$ points among the $x_j$ can be chosen on the right of~\eqref{eq:condition_wlsc}. The condition means that the cost always increases with the number of agents in the system. For a pairwise cost as in~\eqref{eq:pairwise} this is satisfied under the simple condition that $c_2\geq0$, since for a lower bound we can simply neglect all the $c_2(x_j,x_n)$.

\begin{proof}
Let $\rho^k\stackrel{*}{\wto}\rho^*$ be a sequence which converges vaguely but not necessarily narrowly, so that $\cC(\rho^k)$ is bounded. We follow~\cite{Lewin-11} and rewrite the weak convergence into the two successive narrow convergences
$$\rho^k\1_{B_R}\underset{k\to\ii}{\wto}\rho^*\1_{B_R},\qquad\qquad \rho^*\1_{B_R}\underset{R\to\ii}{\wto}\rho^*$$
by localizing first the density to a finite ball $B_R$ of radius $R$, taking $k\to\ii$ and only at the end taking $R\to\ii$. Next we claim that
\begin{equation}
 \cC\left(\rho\1_{B_R}\right)\leq \cC\left(\rho\right)
 \label{eq:increasing_energy_under_loc}
\end{equation}
for any $\rho$, which intuitively says that the cost for the agents in $B_R$ is lower than that of the full system. This will just follow from the condition~\eqref{eq:condition_wlsc}, as we will see. Admitting~\eqref{eq:increasing_energy_under_loc} we have for every $R$
$$\liminf_{k\to\ii}\cC\left(\rho^k\right)\geq \liminf_{k\to\ii}\cC\left(\rho^k\1_{B_R}\right)\geq \cC\left(\rho^*\1_{B_R}\right)$$
by Theorem~\ref{thm:existence} and the tightness of $(\rho^k\1_{B_R})_k$. Taking $R\to\ii$ and using now the tightness of $\rho^*\1_{B_R}$ gives the stated lower semi-continuity.

In order to prove~\eqref{eq:increasing_energy_under_loc} we have to go back to grand-canonical probabilities. The localization into $B_R$ naturally brings in the concept of subsystems introduced in Section~\ref{sec:localization}. Consider any grand-canonical probability $\bP\in\PiGC(\rho)$ with $\bP(\bc)<\ii$.
For any fixed ball $B_R$ we define a new grand-canonical probability $\bP_{|B_R}$ on $B_R\cap\Omega$ using the formulas
\begin{align*}
\bP_{|B_R,0}=&\bP_0+\sum_{\ell\geq 1}\bP_\ell\left((\Omega\setminus B_R)^{\ell}\right),\\
\bP_{|B_R,n}(A_1\times\cdots\times  A_n)=&\bP_n(A_1\times\cdots\times  A_n)\\&+\sum_{\ell\geq n+1}{\ell\choose n}\bP_\ell\left(A_1\times\cdots\times  A_n\times (\Omega\setminus B_R)^{\ell-n}\right)
\end{align*}
for every $A_1,...,A_n\subset B_R\cap\Omega$ and $n\geq1$. This is just the extension to grand-canonical probabilities of the construction in~\eqref{eq:geometric} for each $\bP_n$, by linearity. The interpretation is that $\bP_{|B_R}$ describes the subsystem consisting of all the agents in $B_R$. By linearity of the construction, one has again $\rho_{\bP_{|B_R}}=\rho_{\bP}\1_{B_R}$, that is, the density of the localization is just the restriction of the total density. Next we look at the cost and write
$$\int_{\Omega^n} c_n{\rm d}\bP_n=\sum_{m=0}^n{n\choose m}\int_{(B_R)^m\times(\Omega\setminus B_R)^{n-m}} c_n\,{\rm d}\bP_n,$$
that is, we look at all the possible ways to split the $n$ agents between $B_R$ and $\Omega\setminus B_R$. The combinatorial factor is again by symmetry of $\bP_n$ and $c_n$. Using $c_n(x_1,...,x_n)\geq c_m(x_1,...,x_\ell)$ by~\eqref{eq:condition_wlsc} and summing over $n$, we find
$\bP(\bc)\geq \bP_{|B_R}(\bc)\geq \cC\left(\rho\1_{B_R}\right).$
We obtain~\eqref{eq:increasing_energy_under_loc} after optimizing over $\bP$.
\end{proof}

\begin{remark}[Monge grand-canonical states]\label{rmk:Monge}
In optimal transport, the concept of \emph{Monge states} plays an important role~\cite{Villani-09}. A grand-canonical probability $\bP=(\bP_n)_{n\geq0}$ is called a \emph{Monge grand-canonical probability} whenever all the $\bP_n$ are Monge for $n\geq2$. This means that there exists a transport map $T_n:\Omega\to\Omega$ with $(T_n)^{\circ n}={\rm Id}$ such that $\bP_n=({\rm Id},T_n,\cdots,T_n^{\circ(n-1)})_\#\rho_{\bP_n}/n$. It is well known that for $N\geq3$, the $N$-marginal optimal transport problem does not always admit Monge minimizers~\cite{ColStr-16,SeiMarGerNenGieGor-17,MarGerNen-17,FriVog-18}, on the contrary to the classical $N=2$ case. We expect the same for $\cC(\rho)$ in the grand-canonical case but will not study this question further in this article. We will see in Section~\ref{sec:1D} that Grand-Canonical Monge states are optimal in dimension $d=1$ for a convex cost.
\end{remark}

\subsubsection{Relation with the multi-marginal problem}
The $n$-marginal problem can be written in the form
$$\cC_n(\rho)=\inf_{(0,...,0,\bP_n,0,...)\in\PiGC(\rho)}\int_{\Omega^n}c_n\,{\rm d}\bP_n$$
when $\rho(\Omega)=n\in\N$. The following result inspired of~\cite{LewLieSei-23_DFT} states that $\cC(\rho)$ is the convex hull of the $\cC_n$, if we decompose $\rho$ into any possible convex combination of densities with integer masses. It follows from the existence in Theorem~\ref{thm:existence}.

\begin{corollary}[Convex hull]
Let $\rho$ be a positive measure with $\rho(\Omega)<\ii$ such that $\cC(\rho)<\ii$. Then, under the same assumptions as in Theorem~\ref{thm:existence}, we have
\begin{equation}
 \cC(\rho)=\min_{\substack{\rho=\sum_{n\geq1}\alpha_n\rho_n\\ \rho_n(\Omega)=n\\ \sum_{n\geq0}\alpha_n=1}}\;\sum_{n\geq0}\alpha_n\,\cC_n(\rho_n).
 \label{eq:convex_hull}
\end{equation}
\end{corollary}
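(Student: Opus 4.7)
The plan is to exploit the natural correspondence between a grand-canonical probability $\bP=(\bP_n)_{n\geq 0}\in\PiGC(\rho)$ and a pair consisting of weights plus conditioned $n$-marginal plans: whenever $\alpha_n:=\bP_n(\Omega^n)>0$, one decomposes $\bP_n=\alpha_n\tilde\bP_n$ where $\tilde\bP_n$ is a symmetric probability on $\Omega^n$, and the density associated with $\tilde\bP_n$ is a positive measure of total mass exactly $n$. The corollary will then follow by checking the two inequalities separately, exhibiting on each side an explicit construction from the other.

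For the $\geq$ direction, Theorem~\ref{thm:existence} provides a minimizer $\bP^*$ of $\cC(\rho)$. Setting $\alpha_0^*:=\bP_0^*$, $\alpha_n^*:=\bP^*_n(\Omega^n)$, and, whenever $\alpha_n^*>0$, $\rho_n^*:=n\,\bP_n^*(\,\cdot\,,\Omega^{n-1})/\alpha_n^*$, the constraints defining $\PiGC(\rho)$ translate exactly into $\sum_{n\geq 0}\alpha_n^*=1$ and $\rho=\sum_{n\geq 1}\alpha_n^*\rho_n^*$ with $\rho_n^*(\Omega)=n$. For each such $n$, $\tilde\bP_n^*:=\bP_n^*/\alpha_n^*$ is admissible for the canonical $n$-marginal problem with density $\rho_n^*$, hence $\int c_n\,\rd\bP_n^*\geq\alpha_n^*\,\cC_n(\rho_n^*)$. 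Summing over $n$ (with the convention $\cC_0(0)=c_0$) yields
$$\cC(\rho)=\bP^*(\bc)\geq\sum_{n\geq 0}\alpha_n^*\,\cC_n(\rho_n^*),$$
which bounds $\cC(\rho)$ below by the right-hand side of~\eqref{eq:convex_hull}.

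For the $\leq$ direction, fix any admissible decomposition $\rho=\sum_{n\geq 1}\alpha_n\rho_n$ with $\sum_{n\geq 0}\alpha_n\cC_n(\rho_n)<\infty$ (the bound being trivial otherwise). For every $n\geq 1$ with $\alpha_n>0$ pick an optimizer $\bQ_n^*$ of $\cC_n(\rho_n)$, whose existence follows from Theorem~\ref{thm:existence} applied to the subclass of grand-canonical probabilities supported on the single index $n$ (superstability is trivially inherited, and tightness is immediate since the number of agents is fixed). Then $\bP:=(\alpha_0,\alpha_1\bQ_1^*,\alpha_2\bQ_2^*,\ldots)$ belongs to $\PiGC(\rho)$ with average cost exactly $c_0\alpha_0+\sum_{n\geq 1}\alpha_n\cC_n(\rho_n)$, proving the reverse inequality.

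Combining the two steps gives equality in~\eqref{eq:convex_hull}, and the chain of inequalities in the first step shows that the decomposition extracted from $\bP^*$ already attains the right-hand side, so the infimum is in fact a minimum. No serious obstacle arises: the only mild care is to omit indices with $\alpha_n^*=0$ in the definition of the $\rho_n^*$, and to verify that each $\int c_n\,\rd\bP_n^*$ is individually finite, which follows from stability since it is bounded below by $-(A+Bn)\alpha_n^*$ and $\sum_n(A+Bn)\alpha_n^*=A+B\rho(\Omega)<\infty$.
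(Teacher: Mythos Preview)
Your proof is correct and follows essentially the same approach as the paper's: both directions use the natural bijection between grand-canonical probabilities and weighted families of canonical plans, with the $\geq$ direction extracted from a minimizer $\bP^*$ of $\cC(\rho)$ and the $\leq$ direction built from (near-)optimal canonical plans $\bQ_n^*$ for each $\cC_n(\rho_n)$. The only cosmetic difference is that the paper states the additional consequence that each non-vanishing $\bP_n^*$ must itself be optimal for its own $n$-marginal problem, which is implicit in your chain of inequalities.
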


\begin{proof}
We first prove the equality with an infimum instead of a minimum on the right side of~\eqref{eq:convex_hull}.
Consider some weights $\alpha_n$ and densities $\rho_n$ as in the infimum. If for some $\alpha_n>0$ we have $\cC_n(\rho_n)=+\ii$ then there is nothing to prove. Hence we may assume that $\cC_n(\rho_n)<\ii$ for all $\alpha_n>0$ and take a corresponding minimizer $\bP_n$.
We then obtain the upper bound after introducing the grand-canonical probability $\bP=(\alpha_n\bP_n)_{n\geq0}$. For the lower bound we consider a minimizer $\bP^*=(\bP^*_n)_{n\geq0}$ for the grand-canonical problem $\cC(\rho)$ and obtain
$$\cC(\rho)=\bP^*(\bc)=\sum_{\bP_n^*\neq0}\bP_n^*(c_n)\geq \sum_{\bP_n^*\neq0}\bP_n^*(\Omega^n)\,\cC_n\left(\frac{\rho_{\bP^*_n}}{\bP^*_n(\Omega^n)}\right).$$
Letting $\alpha_n=\bP_n^*(\Omega^n)$ and $\rho_n=\rho_{\bP^*_n}/ \bP^*_n(\Omega^n)$ (when $\bP_n^*=0$ we take any $\rho_n$), we see that the right side is larger than the infimum in the statement. We obtain in addition that each non-vanishing $\bP_n^*$ has to be an optimizer of the $n$-marginal problem corresponding to its own density:
$$\frac{\bP^*_n(c_n)}{\bP^*_n(\Omega^n)}=\cC_n\left(\frac{\rho_{\bP^*_n}}{\bP^*_n(\Omega^n)}\right)$$
and thus the infimum is really a minimum.
\end{proof}

The fact that $\cC$ is a kind of convex hull let us suspect that it is also the weak closure of the $\cC_n$. The following theorem is an adaptation of a similar result in the quantum case in~\cite{LewLieSei-23_DFT} and it is somewhat the reciprocal to the discussion in Section~\ref{sec:localization}.

\begin{theorem}[Weak lower semi-continuous envelope]\label{thm:wlsc2}
Take $\Omega=\R^d$. Let $\bc=(c_n)_{n \geq 0} $ be a superstable family of lower semi-continuous costs, such that $c_0=0$ and
\begin{equation}
\lim_{\substack{\min_k|y_k|\to\ii\\\min_{k\neq \ell}|y_k-y_\ell|\to\ii}}c_{n+m}(x_1,...,x_n,y_1,...,y_m)= c_{n}(x_1,...,x_{n}),
\label{eq:condition_negligible}
\end{equation}
for every $n\geq1$. Let $\rho$ be any finite measure so that $\cC(\rho)<\ii$. Then there exists a sequence $\rho^k$ such that $N_k:=\rho^k(\R^d)\in \N$,
$$\rho^k\wto\rho\quad\text{locally and}\qquad \lim_{k\to\ii}\cC_{N_k}\big(\rho^k\big)=\cC(\rho).$$
\end{theorem}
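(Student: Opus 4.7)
The plan is to build, for each $k$, an explicit $N_k$-particle probability $\bP_{N_k}$ whose marginal density $\rho^k$ has integer mass $N_k$ and converges locally to $\rho$, and whose cost $\bP_{N_k}(c_{N_k})$ converges to $\cC(\rho)$; then to establish the matching lower bound by a localization argument. The construction starts from the convex-hull corollary, which yields a decomposition $\bP^{*}=\sum_{n\geq 0}\alpha_n\bQ_n$ of an optimizer of $\cC(\rho)$ into canonical pieces, each $\bQ_n$ being an $n$-particle optimizer of $\cC_n(\rho_n)$, with $\cC(\rho)=\sum_n\alpha_n\cC_n(\rho_n)$. First I would truncate at some index $n_{\max}^k\to\infty$, which costs only $\eta_k:=\sum_{n>n_{\max}^k}\alpha_n\to 0$ in probability mass and $o(1)$ in total cost (using $\cC(\rho)<\infty$ and stability), pick integers $N_k\geq n_{\max}^k$ with $N_k\to\infty$, and choose deterministic well-separated filler configurations $\tau^{k,n}\in(\R^d)^{N_k-n}$ with $|\tau^{k,n}_i|,|\tau^{k,n}_i-\tau^{k,n}_j|\to\infty$ as $k\to\infty$ (and an analogous $N_k$-tuple $\tau^k$). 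Writing $F_n^k$ for the symmetrization of the Dirac at $\tau^{k,n}$, the candidate state is
\[ \bP_{N_k}:=\sum_{n=0}^{n_{\max}^k}\alpha_n\,\bQ_n\otimes_{\mathrm{sym}} F_n^k + \eta_k\,F^k, \]
a symmetric $N_k$-particle probability on $\R^{dN_k}$.

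Verifying the density and its local convergence is direct. The coefficients sum to $1$, each tensor summand carries density of mass $n+(N_k-n)=N_k$ (so $\rho^k(\R^d)=N_k\in\N$), and on any compact $K$ the Dirac fillers vanish once the $\tau^{k,n}_i$ leave $K$, leaving $\rho^k|_K=\sum_{n\leq n_{\max}^k}\alpha_n\rho_n|_K\to\rho|_K$. For the cost, condition~\eqref{eq:condition_negligible} gives the pointwise convergence $c_{N_k}(x_1,\ldots,x_n,\tau^{k,n})\to c_n(x_1,\ldots,x_n)$; combined with an integrability argument based on super-stability~\eqref{eq:superstable} (which provides an envelope allowing a Fatou/dominated convergence step against $\bQ_n$) this yields $(\bQ_n\otimes_{\mathrm{sym}} F_n^k)(c_{N_k})\to\bQ_n(c_n)=\cC_n(\rho_n)$, and similarly $\eta_k\,c_{N_k}(\tau^k)\to 0$ using $c_0=0$. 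Summing and controlling the tail (uniform in $k$ via $\sum_n\alpha_n\cC_n(\rho_n)<\infty$) gives $\bP_{N_k}(c_{N_k})\to\cC(\rho)$, whence $\cC_{N_k}(\rho^k)\leq\bP_{N_k}(c_{N_k})\to\cC(\rho)$: the desired upper bound.

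The matching lower bound $\liminf_k\cC_{N_k}(\rho^k)\geq\cC(\rho)$ is the principal obstacle, because $\rho^k$ is not tight (a mass $N_k-\rho(\R^d)\to\infty$ escapes to infinity), and Theorem~\ref{thm:existence} does not apply directly. Using $\cC_{N_k}(\rho^k)\geq\cC(\rho^k)$, my plan is to take a grand-canonical minimizer $\tilde\bP^k$ of $\cC(\rho^k)$ and localize it to a ball $B_R$ via the recipe of Section~\ref{sec:localization}: this produces a grand-canonical state on $B_R$ with density $\rho^k|_{B_R}$, which converges tightly to $\rho|_{B_R}$ since for $k$ large the fillers lie outside $B_R$. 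Condition~\eqref{eq:condition_negligible}, combined with super-stability~\eqref{eq:superstable} to force the exterior particles of $\tilde\bP^k$ to be spread out with high probability, gives an asymptotic decoupling: the cost of $\tilde\bP^k$ is bounded below by that of its $B_R$-localization up to an $o(1)$ error (the exterior contribution being non-negative asymptotically by stability). Theorem~\ref{thm:existence} applied on $B_R$ then yields $\liminf_k\cC(\rho^k)\geq\cC_{B_R}(\rho|_{B_R})$, and the proof is completed by sending $R\to\infty$, since $\cC_{B_R}(\rho|_{B_R})\uparrow\cC(\rho)$. The most delicate technical step is precisely this quantitative decoupling of interior and exterior interactions, which is where super-stability is essential: it forces the exterior configurations of $\tilde\bP^k$ to be dispersed enough for condition~\eqref{eq:condition_negligible} to apply uniformly.
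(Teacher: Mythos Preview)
Your upper-bound construction is correct and parallels the paper's, though the paper is more economical. It works directly with an optimizer $\bP\in\PiGC(\rho)$ (no convex-hull decomposition), truncates to $\bP^M$ supported on $\{0,\ldots,M\}$, verifies $\bP^M(\bc)\to\cC(\rho)$ and (by tight lower semi-continuity, Theorem~\ref{thm:existence}) $\cC(\rho_{\bP^M})\to\cC(\rho)$, and then builds a \emph{single} $M$-particle state
\[
\bQ_R:=\text{Sym}\Big(\bP^M_0\,\delta_{R_1}\!\otimes\cdots\otimes\delta_{R_M}+\bP_1\otimes\delta_{R_2}\!\otimes\cdots\otimes\delta_{R_M}+\cdots+\bP_M\Big)
\]
using one filler tuple $(R_1,\ldots,R_M)$ rather than separate $\tau^{k,n}$ for each $n$. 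This gives $N_k=M$ and $\bQ_R(c_M)\to\bP^M(\bc)$ directly from~\eqref{eq:condition_negligible}, which the paper takes in the weak sense (integrated against each $\bP_n$); no separate dominated-convergence envelope from super-stability is needed at this step.

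There is a genuine gap in your lower bound. You assert that super-stability~\eqref{eq:superstable} forces the exterior particles of an \emph{arbitrary} minimizer $\tilde\bP^k$ of $\cC(\rho^k)$ to be well-separated with high probability, so that~\eqref{eq:condition_negligible} decouples interior from exterior. Super-stability does not give this: for each \emph{fixed} compact $K$ it bounds $\sum_j\1_K(x_j)$ in terms of the cost, but the constant $\eps_K$ depends on $K$ and may degenerate as $K$ translates to infinity. Nothing in~\eqref{eq:superstable} prevents $\tilde\bP^k$ from placing many exterior particles in a small far-away region, where $\min_{k\neq\ell}|y_k-y_\ell|\not\to\infty$ and~\eqref{eq:condition_negligible} is inapplicable. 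Without the monotonicity~\eqref{eq:condition_wlsc} (which this theorem does not assume), localization need not lower the cost, and your decoupling step fails.

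The paper does not localize arbitrary minimizers. Having already established that both $\cC(\rho_{\bP^M})$ and $\bP^M(\bc)$ tend to $\cC(\rho)$, it sandwiches $\cC_M(\rho_{\bQ_R})$ between them via $\cC(\rho_{\bP^M})\leq\liminf\cC_M(\rho_{\bQ_R})$ and $\cC_M(\rho_{\bQ_R})\leq\bQ_R(c_M)\to\bP^M(\bc)$, and then for each $M$ picks specific points $R_j^M$ with $|R_j^M|\geq M$ realizing
\[
\big|\cC_M(\rho_{\bQ_{R^M}})-\cC(\rho)\big|\leq\tfrac1M+\bP^M(\bc)-\cC(\rho_{\bP^M})+\big|\cC(\rho_{\bP^M})-\cC(\rho)\big|.
\]
Note that the lower sandwich inequality is a statement about the \emph{specific} densities $\rho_{\bQ_R}=\rho_{\bP^M}+(\text{weighted Diracs at the }R_j)$, not about general locally-convergent sequences.
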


The limit in~\eqref{eq:condition_negligible} is meant in the weak sense, that is,
\begin{multline*}
\lim_{\substack{\min_k|y_k|\to\ii\\\min_{k\neq \ell}|y_k-y_\ell|\to\ii}}\int_{\R^{dn}} c_{n+m}(x_1,...,x_n,y_1,...,y_m)\,{\rm d}\bP_n(x_1,...,x_n)\\= \int_{\R^{dn}}c_{n}(x_1,...,x_{n})\,{\rm d}\bP_n(x_1,...,x_n),
\end{multline*}
for every probability $\bP_n$ such that the right side is finite. Under the other condition~\eqref{eq:condition_wlsc}, the proof of Theorem~\ref{thm:wlsc} carries over to the \emph{local} weak convergence of measures. Thus if we add~\eqref{eq:condition_wlsc} to~\eqref{eq:condition_negligible}, we really obtain that $\cC$ is the closure of the $\cC_n$'s for the local weak convergence of measures.  The fact that we only have local convergence is due to the possible unboundedness of the number $N_k$ of agents in $\rho^k$. This is because we need infinitely many agents to be able to reproduce a $\bP=(\bP_n)_{n\geq0}$ with $\bP_n\neq0$ for arbitrarily large $n$. If we know that there exists a minimizer which satisfies $\bP_n\equiv0$ for $n$ large, then the local convergence can be replaced by weak convergence in duality with $C_0(\R^d)$.

\begin{proof}
Let $\bP=(\bP_n)_{n\geq0}$ be a minimizer for $\cC(\rho)<\ii$. Then
$\sum_{n\geq0}\bP_n(\R^{dn})=1$ and $\sum_{n\geq0}n\,\bP_n(\R^{dn})=\rho(\R^{d})<\ii$.
In addition, the stability gives $c_n+Bn+A\geq0$ and
$$\sum_{n\geq0}\bP_n(c_n+Bn+A)=\cC(\rho)+B\rho(\R^{d})+A<\ii.$$
This proves that $\sum_{n\geq0}|\bP_n(c_n)|<\ii$. Our first task will be to cut-off the large number of agents in $\bP$. We define
$$\bP^M_n:=\begin{cases}
\bP_0+\sum_{n\geq M+1}\bP_n(\R^{dn})&\text{for $n=0$,}\\
\bP_n&\text{for $1\leq n\leq M$,}\\
0&\text{for $n\geq M+1$.}
\end{cases}
$$
Then $\rho_{\bP^M}=\sum_{n=1}^M\rho_{\bP_n}\leq \rho$ converges narrowly to $\rho$ and
$$\bP^M(\bc)=\cC(\rho)+c_0\sum_{n\geq M+1}\bP_n(\R^{dn})-\sum_{n\geq M+1}\bP_n(c_n)\underset{M\to\ii}\longrightarrow\cC(\rho).$$
From the narrow convergence we have by Theorem~\ref{thm:existence}
$\lim_{M\to\ii}\bP^M(\bc)\geq \liminf_{M\to\ii}\cC(\rho_{\bP^M})\geq \cC(\rho)$
and this proves that
$$\lim_{M\to\ii}\bP^M(\bc)=\lim_{M\to\ii}\cC(\rho_{\bP^M})= \cC(\rho).$$
Thus we may replace $\bP$  by $\bP^M$, which only yields a small error in the density and in the average cost.

Next we take $M$ points $R_1,...,R_M\in\R^d$ and consider the $M$-particle probability
$$\bQ_R:={\rm Sym.}\bigg(\bP^M_0\delta_{R_1}\otimes\cdots \otimes\delta_{R_M}+\bP_1\otimes\delta_{R_2}\otimes\cdots \otimes\delta_{R_1}+\cdots +\bP_M\bigg)$$
symmetrized in the usual way. The density of $\bQ_R$ equals
$$\rho_{\bQ_R}=\rho_{\bP^M}+\bP^M_0\sum_{m=1}^M\delta_{R_j}+\bP_1(\R^d)\sum_{m=2}^M\delta_{R_j}+\cdots +\bP_{M-1}\big(\R^{d(M-1)}\big)\delta_{R_M}$$
and it converges to $\rho^M$ locally when $|R_j|\to\ii$ for all $j$. On the other hand, the $M$-marginal cost is
\begin{multline*}
\bQ_R(c_M)=\bP^M_0c_M(R_1,...,R_M)+\int_{\R^d}c_M(x_1,R_2,...,R_M)\,{\rm d}\bP_1(x_1)\\
+\int_{\R^{2d}}c_M(x_1,x_2,R_3...,R_M)\,{\rm d}\bP_2(x_1,x_2)+\cdots  +\int_{\R^{Md}}c_M\,{\rm d}\bP_M.
\end{multline*}
and it converges to $\bP^M(\bc)$ when $|R_j|\to\ii$ in such a way that $|R_j-R_k|\to\ii$ for $j\neq k$, due to our assumption~\eqref{eq:condition_negligible}. We have therefore
$$\cC(\rho_{\bP^M})\leq \liminf\cC_{M}(\rho_{\bQ_R})\leq \bQ_R(c_M)\longrightarrow \bP^M(\bc).$$
Thus for each $M$ we can find some positions $R_1^M,...,R_M^M$ such that
$$|R_j^M|\geq M,\qquad \forall j=1,...,M$$
$$\left|\cC_M(\rho_{\bQ_{R^M}})-\cC(\rho)\right|\leq \frac{1}{M}+\bP^M(\bc)-\cC(\rho_{\bP^M})+\left|\cC(\rho_{\bP^M})-\cC(\rho)\right|.$$
The densities $\rho_{\bQ_{R^M}}$ make up the sought-after sequence.
\end{proof}

\section{Support for pairwise repulsive costs}\label{sec:support}

\subsection{Support and truncation}\label{sec:support_def}

When $\cC(\rho)$ has a minimizer $\bP$, a natural question is to ask how many agents are necessary to minimize the given grand-canonical cost $\bc$, that is, how many of the $\bP_n$'s are non zero. This is important for numerical purposes because if we know that not too many $\bP_n$'s are different from zero, we can then reduce the number of unknowns.

\begin{definition}[Support]
We call
$${\rm Supp}(\bP)=\left\{n\geq0\ :\ \bP_n\neq0\right\}$$
the \emph{support} in $n$ of a grand-canonical probability $\bP=(\bP_n)_{n\geq0}$ and we say that $\bP$ has a \emph{compact support} whenever ${\rm Supp}(\bP)$ is bounded.
\end{definition}

If minimizers for $\cC(\rho)$ are known to have a compact support, this has the consequence that one can rewrite the grand-canonical optimal transport problem as a usual multi-marginal problem, at the expense of adding one variable. We explain this now. Assume for instance that
$$\cC(\rho)=\inf_{\substack{\bP\in\PiGC(\rho)\\ {\rm supp}(\bP)\subset [0,N]}}\bP(\bc)$$
for some $N$ which might depend on $\rho$. Let us then introduce the new cost on $\widetilde\Omega^N$ with $\widetilde\Omega:=\Omega\times\{0,1\}$ defined by
$$\widetilde{c}(x_1,\sigma_1,...,x_N,\sigma_N):=c_{n}(x_{i_1},...,x_{i_n})$$
where $n=\sum_{j=1}^N\sigma_j$ and $\{i_1<\cdots <i_n\}=\{i\ :\ \sigma_i=1\}$,
that is, we only retain the $x_i$ for which $\sigma_i=1$. What we are doing here is to associate to each agent a new variable $\sigma_i$ which determines whether the agent is in the system ($\sigma_i=1$) or not ($\sigma_i=0$). Then, a symmetric probability $\widetilde\bP_N$ exactly corresponds to a grand-canonical probability $\bP=(\bP_n)_{n\leq N}$ of support in $[0,N]$ through the relations
\begin{align*}
\bP_n(A_1\times\cdots\times A_n)=&{N\choose n}\widetilde\bP_N\Big((A_1\times\{1\})\times\cdots\times(\Omega\times\{0\})^{N-n}\Big),\\
\bP_0=&\widetilde\bP_N\Big((\Omega\times\{0\})^N\Big),\\
\bP_N(A_1\times\cdots \times A_N)=&\widetilde\bP_N\Big((A_1\times\{1\})\times\cdots\times (A_N\times\{1\})\Big).
\end{align*}
In other words, the grand-canonical $\bP$ is the \emph{localization to $\Omega\times\{1\}$} of the canonical $\widetilde\bP_N$, as defined in Section~\ref{sec:localization}. The total cost coincides with the grand-canonical cost
$$\int_{(\Omega\times\{0,1\})^N}\widetilde c_N\,{\rm d}\widetilde\bP_N=\sum_{n=0}^N\int_{\Omega^n}c_n\,{\rm d}\bP_n$$
and the density equals
$\rho_{\bP}(x)=\rho_{\widetilde\bP_N}(x,1)$.
We see that the grand-canonical problem can always be rewritten as a symmetric $N$-marginal problem, with the difference that not all the first marginal is fixed. Only its projection to $\Omega\times\{1\}$ is given.

\begin{remark}
For a pairwise cost as in~\eqref{eq:pairwise} we can rewrite the new $N$-particle cost in the simple form
$$\widetilde{c}(x_1,\sigma_1,...,x_N,\sigma_N):=\sum_{1\leq j<k\leq N}\sigma_j\sigma_k\,c_2(x_j,x_k).$$
\end{remark}

In numerical simulations, we of course always have to truncate the support of the multi-plan $\bP$ to some $N$ and thus can rewrite the problem as the above $N$-marginal optimization. Knowing the size of the support of exact minimizers is important to suppress or diminish the approximation due to the truncation. Good quantitative estimates are then useful and they will often depend on $\rho$.

\begin{definition}[Truncated Grand-Canonical problem]
The truncated Grand-Canonical problem is defined by
\begin{equation}
\label{pb:truncated}
\cC^{\leq N}(\rho):=\inf_{\substack{\bP\in\PiGC(\rho)\\ {\rm supp}(\bP)\subset[0,N]}}\bP(\bc).
\end{equation}
\end{definition}

This is the same as replacing $c_n$ by $+\ii$ for every $n\geq N+1$. The truncated problem~\eqref{pb:truncated} was studied in \cite{BouButChaPas-21} where it was obtained as the weak closure of $\cC_N$ in a similar spirit as Theorems~\ref{thm:wlsc} and~\ref{thm:wlsc2}. The following provides the convergence of $\cC^{\leq N}(\rho)$ towards $\cC(\rho)$ when $N\to\ii$.

\begin{theorem}[Convergence of the truncated problem]\label{thm:truncation}
Let $\Omega\subset\R^d$ be any Borel set. Let $\bc=(c_n)_{n \geq 0} $ be a stable family of lower semi-continuous costs with $c_1\in L^1(\Omega,\rd \rho)$. Let $\rho$ be a finite measure on $\Omega$ such that
\begin{equation}
\cC\big((1+\eps)\rho\big)<\ii\qquad\text{for some $\eps>0$.}
\label{eq:condition_rho_not_extreme}
\end{equation}
Then we have
\begin{equation}
 \lim_{N\to\ii}\cC^{\leq N}(\rho)=\cC(\rho).
 \label{eq:limit_truncation}
\end{equation}
If in addition $\bc=(c_n)_{n \geq 0} $ is superstable, then any associated sequence of optimizers $\bP^N=(\bP_0^N,...,\bP_N^N,0,...)$ for $\cC^{\leq N}(\rho)$ converges narrowly to a minimizer $\bP^*$ for $\cC(\rho)$, up to subsequences.
\end{theorem}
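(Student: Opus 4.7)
The inequality $\cC^{\leq N}(\rho)\geq \cC(\rho)$ and the non-increasing character of $N\mapsto \cC^{\leq N}(\rho)$ are immediate, so the limit $\ell:=\lim_{N\to\ii}\cC^{\leq N}(\rho)\in[\cC(\rho),\ii]$ exists and the only task is to prove $\ell\leq \cC(\rho)$. The strategy is to combine a near-minimizer $\bP\in\PiGC(\rho)$ with $\bP(\bc)\leq \cC(\rho)+\eta$ and a \emph{reservoir} $\bQ\in\PiGC((1+\eps)\rho)$ of finite cost (which exists by hypothesis~\eqref{eq:condition_rho_not_extreme}) so that the tail of $\bP$ beyond $N$ can be absorbed into a grand-canonical probability with density exactly $\rho$ and support inside $[0,N]$.

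Concretely, for $\gamma\in(0,1]$ and $N\geq 1$ I set $\bR^{(N)}_{\gamma,n}:=(1-\gamma)\bP_n+\tfrac{\gamma}{1+\eps}\bQ_n$ for $2\leq n\leq N$ and $\bR^{(N)}_{\gamma,n}:=0$ for $n>N$. To keep the density equal to $\rho$, I push the lost tail density
\begin{equation*}
\tau^N_\gamma:=(1-\gamma)\sum_{n>N}n\,\bP_n(\cdot,\Omega^{n-1})+\tfrac{\gamma}{1+\eps}\sum_{n>N}n\,\bQ_n(\cdot,\Omega^{n-1})
\end{equation*}
onto the one-particle level $\bR^{(N)}_{\gamma,1}:=(1-\gamma)\bP_1+\tfrac{\gamma}{1+\eps}\bQ_1+\tau^N_\gamma$, and I fix $\bR^{(N)}_{\gamma,0}$ to enforce total mass $1$. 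A direct bookkeeping gives $\rho_{\bR^{(N)}_\gamma}=\rho$ exactly and
\begin{equation*}
\bR^{(N)}_{\gamma,0}=\frac{\gamma\eps}{1+\eps}+(1-\gamma)\bP_0+\frac{\gamma}{1+\eps}\bQ_0-(1-\gamma)\!\sum_{n>N}\!(n-1)\bP_n(\Omega^n)-\tfrac{\gamma}{1+\eps}\!\sum_{n>N}\!(n-1)\bQ_n(\Omega^n).
\end{equation*}
The last two sums go to $0$ as tails of the convergent series $\sum n\bP_n(\Omega^n)=\rho(\Omega)$ and $\sum n\bQ_n(\Omega^n)=(1+\eps)\rho(\Omega)$, and the \emph{positive buffer} $\gamma\eps/(1+\eps)$---created solely by the mismatch between the mass and the density of $\bQ$---then forces $\bR^{(N)}_{\gamma,0}\geq 0$ for $N$ large. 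Hence $\bR^{(N)}_\gamma\in\PiGC(\rho)$ with support in $[0,N]$, and this is where hypothesis~\eqref{eq:condition_rho_not_extreme} is used.

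For the cost, stability implies $\sum_n\int(c_n+A+Bn)\,\rd\bP_n<\ii$, hence $\sum_{n=1}^N\int c_n\,\rd\bP_n\to \bP(\bc)-c_0\bP_0$ and similarly for $\bQ$. The only new term is $\int c_1\,\rd\tau^N_\gamma$, which vanishes in the limit by dominated convergence, using $\tau^N_\gamma\leq\rho$ (pointwise), $\tau^N_\gamma\to 0$ (setwise), and the hypothesis $c_1\in L^1(\Omega,\rd\rho)$. Collecting the three pieces,
\begin{equation*}
\lim_{N\to\ii}\bR^{(N)}_\gamma(\bc)=(1-\gamma)\bP(\bc)+\tfrac{\gamma}{1+\eps}\bQ(\bc)+\tfrac{c_0\gamma\eps}{1+\eps},
\end{equation*}
so $\limsup_N\cC^{\leq N}(\rho)$ is bounded by this quantity; sending $\gamma\to 0$ gives $\leq \bP(\bc)\leq \cC(\rho)+\eta$ and then $\eta\to 0$ proves~\eqref{eq:limit_truncation}.

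For the tight convergence of minimizers under super-stability, let $\bP^N$ minimize $\cC^{\leq N}(\rho)$ (existence from Theorem~\ref{thm:existence} applied to the closed constraint $\supp(\bP)\subset[0,N]$). The first part gives $\bP^N(\bc)=\cC^{\leq N}(\rho)\to\cC(\rho)<\ii$, so the costs are uniformly bounded and the argument of Theorem~\ref{thm:existence} applies \emph{verbatim}: each $\bP^N_n$ is tight (its first marginal is dominated by $\rho/n$); a diagonal extraction yields $\bP^N_n\stackrel{*}{\rightharpoonup}\bP^*_n$ for every $n$; the super-stability estimate~\eqref{eqn:estiss} holds uniformly in $N$ and forces $\rho_{\bP^*}=\rho$; and lower semi-continuity identifies $\bP^*$ as a minimizer of $\cC(\rho)$. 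The main obstacle throughout lies in the construction above: naively truncating $\bP$ at level $N$ fails because the lost density has mean number of agents $\geq N$ and cannot be redistributed inside $[0,N]$ while simultaneously matching mass, density and positivity; the admixture with $\bQ$ is precisely what opens the $\gamma\eps/(1+\eps)$ slack on the $n=0$ side needed to reconcile all three constraints.
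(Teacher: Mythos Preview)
Your proof is correct and follows essentially the same strategy as the paper: push the truncated tail density to the $n=1$ level (controlled via $c_1\in L^1(\rd\rho)$), use a reservoir $\bQ\in\PiGC((1+\eps)\rho)$ to create the positive slack $\gamma\eps/(1+\eps)$ on the $n=0$ level, and invoke the proof of Theorem~\ref{thm:existence} for the tight-convergence part. The only cosmetic difference is that the paper splits into two cases ($\bP_0>0$ handled by direct truncation, $\bP_0=0$ by first mixing with $\bQ$ and some vacuum before truncating), whereas you merge both steps into a single $\gamma$-parametrized construction.
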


The second part of the result could also be rephrased in the setting of Gamma-convergence.

The main condition~\eqref{eq:condition_rho_not_extreme} used in the theorem is probably too strong, but it was chosen on account of its simplicity. We claim that it implies $\cC(\rho)<\ii$. Indeed, consider a $\bP=(\bP_n)_{n\geq0}\in \PiGC((1+\eps)\rho)$ such that $\bP(\bc)<\ii$. Then we can introduce the modified state $\bQ$ defined by
\begin{equation}
\begin{cases}
\bQ_0=\bP_0+\frac{1-\bP_0}{1+\eps}&\text{for } n=0,\\
\bQ_n=\frac{\bP_n}{1+\eps}&\text{for } n\geq1.
  \end{cases}
 \label{eq:decreasing_rho}
\end{equation}
It satisfies $\rho_\bQ=\rho$ and
$$\bQ(\bc)=\left(\bP_0+\frac{1-2\bP_0}{1+\eps}\right)c_0+\frac{\bP(\bc)}{1+\eps}<\ii,$$
thus proving the claim that $\cC(\rho)<\ii$.
The reader should interpret~\eqref{eq:condition_rho_not_extreme} as the assumption that $\rho$ must be in the interior of the convex set of densities for which $\cC(\rho)<\ii$. The condition is inspired of~\cite{ChaCha-84} and will re-appear several times in this work, in particular in Section~\ref{sec:entropy}. It could be replaced by the more complicated condition that $\rho=t_1\rho_1+t_2\rho_2$ for two arbitrary densities such that $\max(\rho_1(\Omega),\rho_2(\Omega))>\rho(\Omega)$, $t_1+t_2<1$ and $\cC(\rho_1),\cC(\rho_2)<\ii$.

By using the same construction~\eqref{eq:decreasing_rho} to increase the density instead of decreasing it, we can also see that if there exists $\bP\in \PiGC(\rho)$ such that $\bP(\bc)<\ii$ and $\bP_0\neq0$, then the condition~\eqref{eq:condition_rho_not_extreme} is automatically satisfied. Indeed, similarly as in~\eqref{eq:decreasing_rho} we can introduce the deformed state defined for $0\leq \eps\leq\bP_0/(1-\bP_0)$ by
\begin{equation}
\bP_{\eps,n}:=\begin{cases}
\bP_0-\eps(1-\bP_0)&\text{for $n=0$},\\
(1+\eps)\bP_n&\text{for $n\geq1$.}
\end{cases}
 \label{eq:deformed_Chayes}
\end{equation}
In other words we increase all the $\bP_n$ with $n\geq1$ and compensate by decreasing $\bP_0$.
We have
$$\bP_\eps(\bc)=-\eps\bP_0c_0+(1+\eps)\bP(\bc),$$
which is therefore finite. In addition, we have $\rho_{\bP_\eps}=(1+\eps)\rho$ and thus conclude that $\cC(\eta\rho)$ is finite for every $0\leq \eta<\frac{1}{1-\bP_0}$. This proves the claim that if there exists a $\bP\in\PiGC(\rho)$ such that $\bP_0\neq0$ and $\bP(\bc)<\ii$, then~\eqref{eq:condition_rho_not_extreme} is satisfied.

Our conclusion is that~\eqref{eq:condition_rho_not_extreme} is only an assumption for the case that all states $\bP\in\PiGC(\rho)$ of finite cost $\bP(\bc)<\ii$ must satisfy $\bP_0=0$. Note that for repulsive pairwise costs we will prove later in Lemma~\ref{lem:repulsive_N1} that $\bP^*_0$ always vanishes for a minimizer $\bP^*$ when $\rho(\Omega)\geq1$.

\begin{proof}
When the limit~\eqref{eq:limit_truncation} holds, any minimizer $\bP^{\leq N}$ for $\cC^{\leq N}(\rho)$ forms a minimizing sequence for $\cC(\rho)$. When $\bc$ is superstable, its convergence to a minimizer of $\cC(\rho)$ follows from the proof of Theorem~\ref{thm:existence}. We thus only have to discuss the validity of the limit~\eqref{eq:limit_truncation}.
Let $\bP\in\PiGC(\rho)$ be any state such that $\bP(\bc)<\ii$. We will show that $\limsup_{N\to\ii}\cC^{\leq N}(\rho)\leq\bP(\bc)$. Since it is clear from the definition that $\cC(\rho) \leq \cC^{\leq N}(\rho)$, we immediately obtain the claimed limit~\eqref{eq:limit_truncation} after optimizing over~$\bP$. We have here two possibilities.

\medskip

\noindent\textsl{Case 1: $\bP_0\neq0$.} We introduce the probability $\bP^{\leq N}$ defined by
$$\begin{cases}
\bP^{\leq N}_0=\bP_0-\sum_{n\geq N+1}\rho_{\bP_n}(\Omega)+\sum_{n\geq N+1}\bP_n(\Omega^n)\\
\bP^{\leq N}_1=\bP_1+\sum_{n\geq N+1}\rho_{\bP_n}\\
\bP^{\leq N}_n=\bP_n\qquad \text{for $n=2,...,N$,}\\
\bP^{\leq N}_n=0\qquad \text{for $n\geq N+1$,}
\end{cases}$$
which has density $\rho_{\bP^{\leq N}}=\rho$ as required. Note that
$$\sum_{n\geq N+1}\rho_{\bP_n}(\Omega)=\sum_{n\geq N+1}n\bP_n(\Omega^n)\geq (N+1)\sum_{n\geq N+1}\bP_n(\Omega^n)$$
so that $\bP^{\leq N}_0\leq \bP_0$. We thus need $\bP_0>0$ to make sure that $\bP^{\leq N}_0\geq0$ for $N$ large enough. Then we have
\begin{align*}
\cC^{\leq N}(\rho)\leq \bP^{\leq N}(\bc)&=\bP(\bc)-\sum_{n\geq N+1}\bP_n(c_n)+\sum_{n\geq N+1}\rho_{\bP_n}(c_1) \\
&\qquad +c_0\sum_{n\geq N+1}(1-n)\bP_n(\Omega^n)\\
&\leq \bP(\bc)+\sum_{n\geq N+1}(2A+(A+B)n)\bP_n(\Omega^n)+\sum_{n\geq N+1}\rho_{\bP_n}(c_1)
\end{align*}
which converges to $\bP(\bc)$ as claimed, since we have assumed that $c_1\in L^1(\Omega,\rd \rho)$, thus $\sum_{n\geq 1}|\rho_{\bP_n}(c_1)|<\ii$.

\medskip

\noindent\textsl{Case 2: $\bP_0=0$.} We have to first slightly modify $\bP$ by adding a small component in the vacuum before we apply the previous argument. Since we need to do this at fixed density without generating a too large error in the total cost, this is where the condition~\eqref{eq:condition_rho_not_extreme} becomes useful.
Let $\bQ\in\PiGC((1+\eps_0)\rho)$ with $\eps_0>0$ be such that $\bQ(\bc)<\ii$ and consider the state
$$\bQ_\eps:=(1-\eps)\bP+\frac{\eps}{1+\eps_0}\bQ+\frac{\eps\eps_0}{1+\eps_0}\delta_0$$
where $\delta_0=(1,0,...)$ is the vacuum. Then we have $\rho_{\bQ_\eps}=\rho$, $\bQ_{\eps,0}>0$ and
$$\bQ_\eps(\bc)=(1-\eps)\bP(\bc)+\frac{\eps}{1+\eps_0}\bQ(\bc)+\frac{\eps\eps_0 c_0}{1+\eps_0}\underset{\eps\to0}{\longrightarrow}\bP(\bc).$$
Truncating $\bQ_\eps$ as in Step 1 concludes the proof of the limit.
\end{proof}

\subsection{$\bc$--monotonicity}
Our main tool for establishing properties of the support will be the $\bc$-monotonicity of minimizers, which expresses the optimality with regard to displacements and variations of the number of agents at fixed total density.
For a vector $X=(x_1,...,x_N)\in(\R^d)^N$ and a set of indices $I=\{i_1<i_2<\cdots<i_K\}\subset\{1,...,N\}$ with $|I|:=K$ we denote $X_I:=(x_{i_1},...,x_{i_K})$. The following is an adaptation of a classical fact in multi-marginal optimal transport~\cite{Villani-09,Pass-12}.

\begin{lemma}[Grand-canonical $\bc$-monotonicity]\label{lem:cmonot}
Let $\Omega\subset\R^d$ be any Borel set. Let $\bc=(c_n)_{n \geq 0} $ be a family of lower semi-continuous costs. Assume that $\cC(\rho)<\ii$ admits a minimizer $\bP^*$ which satisfies $\bP_N\neq0$ and $\bP_K\neq0$ for some $N, K\geq0$. Then we have
\begin{equation}\label{eq:cmonot}
c_{|I|+|J|}(X_I,Y_J) + c_{N+K-|I|-|J|}(X_{I^c},Y_{J^c})\\
\geq c_N(X)+c_K(Y),
\end{equation}
for every $I\subset\{1,...,N\}$ and $J\subset\{1,...,K\}$, $\bP_N\otimes \bP_K$--almost everywhere on $\Omega^{N+K}$. In particular, there exist $X\in\Omega^N$ and $Y\in\Omega^K$ such that~\eqref{eq:cmonot} holds.
\end{lemma}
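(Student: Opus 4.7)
I will use a variational perturbation argument: if the claimed inequality failed on a positive-measure set, one could construct a grand-canonical probability $\bP'\in\PiGC(\rho)$ with strictly smaller cost, contradicting the minimality of $\bP^*$. Fix subsets $I\subset\{1,\dots,N\}$, $J\subset\{1,\dots,K\}$, set $n_1:=|I|+|J|$, $n_2:=N+K-n_1$, and consider the Borel map $T_{I,J}(X,Y):=(X_I,Y_J)$ on $\Omega^N\times\Omega^K$, and analogously $T_{I^c,J^c}$. Put
\[F(X,Y):=c_N(X)+c_K(Y)-c_{n_1}(X_I,Y_J)-c_{n_2}(X_{I^c},Y_{J^c}).\]
Since the $c_n$ are bounded below and $\bP^*(\bc)<\ii$, one has $c_N+c_K<\ii$ $\bP^*_N\otimes\bP^*_K$-almost everywhere. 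Arguing by contradiction, suppose that $(\bP^*_N\otimes\bP^*_K)(\{F>0\})>0$ and pick $\delta>0$ together with a Borel set $E_\delta\subset\{F\ge\delta\}\cap\{c_N+c_K<\ii\}$ of positive $\bP^*_N\otimes\bP^*_K$-measure.

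\textbf{The competitor.} For $\lambda>0$ small let $\gamma:=\lambda(\bP^*_N\otimes\bP^*_K)|_{E_\delta}$ and define
\begin{align*}
\bP'_N&:=\bP^*_N-\bigl((\pi_1)_\#\gamma\bigr)^{\rm sym},& \bP'_K&:=\bP^*_K-\bigl((\pi_2)_\#\gamma\bigr)^{\rm sym},\\
\bP'_{n_1}&:=\bP^*_{n_1}+\bigl((T_{I,J})_\#\gamma\bigr)^{\rm sym},& \bP'_{n_2}&:=\bP^*_{n_2}+\bigl((T_{I^c,J^c})_\#\gamma\bigr)^{\rm sym},
\end{align*}
setting $\bP'_n:=\bP^*_n$ otherwise (with obvious adjustments when $n_1\in\{0,N+K\}$ or when indices coincide). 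Positivity of $\bP'_N$ uses the symmetry of $\bP^*_N$: since $(\pi_1)_\#\gamma\leq\lambda\bP^*_K(\Omega^K)\,\bP^*_N$ and $\sigma_\#\bP^*_N=\bP^*_N$ for every permutation $\sigma$, each permuted push-forward stays below $\lambda\bP^*_K(\Omega^K)\,\bP^*_N$, hence so does their average $\bigl((\pi_1)_\#\gamma\bigr)^{\rm sym}$; for $\lambda$ small this is dominated by $\bP^*_N$, and the same reasoning handles $\bP'_K$. Total mass is preserved since both the mass removed and the mass added equal $2\gamma(\Omega^{N+K})$. For the density, for any measure $\nu$ on $\Omega^\ell$ the density contribution of $\nu^{\rm sym}$ to $\rho_\bP$ is the sum of the $\ell$ one-variable marginals of $\nu$, a permutation-invariant quantity. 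A direct accounting then shows that both the density removed (via $\bP'_N,\bP'_K$) and the density added (via $\bP'_{n_1},\bP'_{n_2}$) coincide with the sum of all $N+K$ one-variable marginals of $\gamma$ on $\Omega^N\times\Omega^K$, because $I\sqcup I^c$ and $J\sqcup J^c$ partition those coordinates. Hence $\rho_{\bP'}=\rho_{\bP^*}=\rho$ and $\bP'\in\PiGC(\rho)$.

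\textbf{Energy drop and conclusion.} Each $c_n$ being symmetric, $\int c_n\,d\mu^{\rm sym}=\int c_n\,d\mu$, so
\[\bP^*(\bc)-\bP'(\bc)\;=\;\int_{E_\delta}F\,d\gamma\;\geq\;\lambda\delta\,(\bP^*_N\otimes\bP^*_K)(E_\delta)\;>\;0,\]
contradicting the optimality of $\bP^*$. Therefore, for each fixed $(I,J)$, inequality~\eqref{eq:cmonot} holds $\bP^*_N\otimes\bP^*_K$-almost everywhere. Intersecting the corresponding full-measure sets over the finitely many choices of $(I,J)$ yields a common set of full measure, which is non-empty precisely because $\bP^*_N\neq0$ and $\bP^*_K\neq0$; this provides the $(X,Y)$ claimed in the ``in particular'' statement. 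The main technical delicacy is the marginal bookkeeping that shows $\rho_{\bP'}=\rho$ together with positivity of $\bP'_N,\bP'_K$ (both relying crucially on the symmetry of $\bP^*$); the restriction to $\{c_N+c_K<\ii\}$ avoids any $\ii-\ii$ ambiguity in the cost comparison.
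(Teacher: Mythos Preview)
Your argument is correct and follows essentially the same variational perturbation strategy as the paper: build a competitor in $\PiGC(\rho)$ by removing mass from $\bP^*_N,\bP^*_K$ and adding the corresponding pushforwards to $\bP^*_{n_1},\bP^*_{n_2}$, then use optimality. The only organizational difference is that the paper works with product rectangles $A_1\times\cdots\times A_N\times B_1\times\cdots\times B_K$ and shows the \emph{integrated} inequality is nonnegative on each such rectangle (from which the pointwise a.e.\ statement follows by standard measure theory), whereas you argue directly by contradiction on a general Borel ``bad set'' $E_\delta$. Your route is slightly more streamlined and makes the positivity check for $\bP'_N,\bP'_K$ a bit cleaner via the global bound $(\pi_1)_\#\gamma\leq\lambda\,\bP^*_K(\Omega^K)\,\bP^*_N$; the paper's rectangle approach instead uses the explicit constants $q_N,r_K$ and the marginal decomposition $\bQ_{N'},\bQ^c_{N-N'}$, which makes the density bookkeeping marginally more transparent but is otherwise equivalent.
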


The inequality~\eqref{eq:cmonot} states that, on the support of $\bP_N\otimes\bP_K$, exchanging the position of $|I|$ agents in $\bP_N$ with that of $|J|$ agents in $\bP_K$ must not decrease the cost.

\begin{proof}
Consider any Borel sets $A_1,...,A_N,B_1,...,B_K\subset\Omega$ such that
$q_N:=\bP_N(A_1\times\cdots\times A_ N)>0$ and $r_K:=\bP_K(B_1\times\cdots\times B_K)>0$.
We prove that~\eqref{eq:cmonot} is valid when integrated against $\bP_N\otimes\bP_K$ on $A_1\times\cdots\times A_N\times B_1\times\cdots\times B_K$. We assume for simplicity of notation that $I=\{1,...,N'\}$ and $J=\{1,...,K'\}$. We define the two marginals by
$$\bQ_{N'}(C):=\bP_N(C\times A_{N'+1}\times\cdots \times A_N),\qquad \forall C\subset A_1\times\cdots A_{N'},$$
$$\bQ^c_{N-N'}(C):=\bP_N(A_1\times\cdots\times A_{N'}\times C),\qquad \forall C\subset A_{N'+1}\times\cdots A_{N}.$$
as well as $\bR_{K'}$ and $\bR^c_{K-K'}$ for $\bP_K$ by similar formulas. We then introduce
\begin{multline*}
\bP_n'=\bP_n+\\\begin{cases}
\dps-\eps\, r_K\,\bP_N\bigotimes_{j=1}^N\1_{A_j}&\text{$n=N$,}\\
\dps-\eps\, q_N\,\bP_K\bigotimes_{j=1}^K\1_{B_j}&\text{$n=K$,}\\
\dps\eps \left(\bQ_{N'}\bigotimes_{j=1}^{N'}\1_{A_j}\right)\otimes \left(\bR_{K'}\bigotimes_{j=1}^{K'}\1_{B_j}\right) &\text{$n=N'+K'$,}\\
\dps\eps \left(\bQ^c_{N-N'}\bigotimes_{j=N'+1}^{N}\1_{A_j}\right)\!\otimes\! \left(\bR^c_{K-K'}\bigotimes_{j=K'+1}^{K}\1_{B_j}\right) &\text{$n=N-N'+K-K'$.}
\end{cases}
\end{multline*}
This is a grand-canonical probability for $0<\eps<\min(q_N^{-1},r_K^{-1})$.
Note that $\bP'$ is not symmetric but it can be symmetrized at no cost since the $c_n$'s are symmetric. For a non-symmetric measure, the density is defined to be the sum of all its marginals. Noticing that
$$\rho_{\bP_N\bigotimes_{j=1}^N\1_{A_j}}=\rho_{\bQ_{N'}\bigotimes_{j=1}^{N'}\1_{A_j}}+\rho_{\bQ_{N-N'}\bigotimes_{j=N'+1}^N\1_{A_j}}$$
we obtain that $\rho_{\bP'}=\rho$, hence $\bP'\in\PiGC(\rho)$. The average cost equals
\begin{multline*}
\bP'(\bc)-\bP(\bc) = \eps\int_{A_1\times\cdots\times A_N}\int_{B_1\times\cdots \times B_K}\Big(-c_N(X)-c_K(Y)\\
+c_{N'+K'}(X_I,Y_J)+c_{N-N'+K-K'}(X_{I^c},Y_{J^c})\Big){\rm d}\bP_N(X)\,{\rm d}\bP_K(Y)
\end{multline*}
and it must be non-negative due to the optimality of $\bP$. This concludes the proof of the lemma.
\end{proof}

\subsection{Repulsive pairwise costs}
Now we discuss implications for pairwise costs as in~\eqref{eq:pairwise} which, we recall, take the form
\begin{equation}
c_0=c_1=0,\qquad c_n(x_1,...,x_n)=\sum_{1\leq j<k\leq n}c_2(x_j,x_k).
\label{eq:pairwise2}
\end{equation}
We assume throughout the section that $c_2$ is a \emph{strictly positive (repulsive) lower semi-continuous} function on $\Omega^2$. For any compact set $K\subset\R^d$, we have $\eps_K:=\min_{K\times K}c_2>0$ since $c_2$ attains its minimum. This gives
$$c_n(x_1,...,x_n)\geq \frac{\eps_K}2 N_K(N_K-1)\geq \frac{\eps_K}2N_K^2-\frac{\eps_K}2n,\qquad N_K:=\sum_{j=1}^n\1_K(x_j)$$
and proves that $\bc=(c_n)$ is super-stable. By Theorem~\ref{thm:existence} $\cC(\rho)$ admits a minimizer $\bP^*$ when it is finite.

In this section we discuss several possible conditions on $c_2>0$ which imply that the support of minimizers is always compact. There will be some overlap between these conditions but we have not found a general simple theory which covers everything. To be more precise, we will give a quantitative estimate on the support of minimizers
\begin{itemize}
 \item when $\Omega$ is a bounded domain,
 \item when $1/c_2$ satisfies a triangle-type inequality, which is for instance the case of Riesz interactions $c_2(x,y)=|x-y|^{-s}$ with $s>0$ in $\R^d$,
 \item when $c_2$ is asymptotically doubling.
\end{itemize}
In the Coulomb case $c_2(x,y)=|x-y|^{-1}$ we will show that the length of the support is controlled by $\sqrt{\rho(\Omega)}$ but it can in principle grow with $\rho(\Omega)$.

Before turning to the general case, we start by solving the GC-OT problem when there is only one agent or less in average, $\rho(\Omega)\leq1$. The support is then always in $\{0,1\}$. We also show that whenever there are more than one agent ($\rho(\Omega)>1$), the probability $\bP_0$ that there is no agent at all must vanish. Recall that the positivity of $\bP_0$ played a role through the condition~\eqref{eq:condition_rho_not_extreme} discussed after Theorem~\ref{thm:truncation}.

\begin{lemma}[The case of zero or one agent]\label{lem:repulsive_N1}
Let $\Omega\subset \R^d$ and $\bc=(c_n)_{n\geq0}$ be a pairwise cost  as in~\eqref{eq:pairwise2} with $c_2$ a \emph{strictly positive} lower semi-continuous function on $\Omega^2$.
\begin{itemize}
\item If $\rho(\Omega)\leq1$, then $\cC(\rho)=0$ with the unique minimizer
\begin{equation}
 \bP^*=\big(1-\rho(\Omega)\,,\, \rho\,,\,0\,,...\big).
 \label{eq:Nleq1}
\end{equation}
\item If $\rho(\Omega)>1$ and $\cC(\rho)<\ii$ then all the minimizers satisfy $\bP_0^*=0$.
\end{itemize}
\end{lemma}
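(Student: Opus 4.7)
For part (1), my plan is to guess the minimizer and verify it directly. I would take as candidate $\bP^* = (1 - \rho(\Omega), \rho, 0, 0, \ldots)$, which lies in $\PiGC(\rho)$ thanks to the hypothesis $\rho(\Omega) \leq 1$ (ensuring $\bP^*_0 \in [0,1]$) and clearly has density $\rho$. Since $c_0 = c_1 = 0$ and $\bP^*_n \equiv 0$ for $n \geq 2$, its cost is zero, giving $\cC(\rho) \leq 0$; the matching lower bound $\cC(\rho) \geq 0$ is automatic because every $c_n$ is non-negative. For uniqueness, I would argue that any minimizer $\bQ$ satisfies $\int_{\Omega^n} c_n\,\rd\bQ_n = 0$ for each $n \geq 2$. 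Because $c_2$ is strictly positive everywhere on $\Omega^2$, the sum $c_n = \sum_{j<k} c_2(x_j, x_k)$ is strictly positive pointwise on $\Omega^n$ for $n \geq 2$, and hence the positive measure $\bQ_n$ must vanish. The constraints $\bQ_0 + \bQ_1(\Omega) = 1$ and $\bQ_1 = \rho_{\bQ} = \rho$ then uniquely determine $\bQ = \bP^*$.

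For part (2), I would argue by contradiction using the grand-canonical $\bc$-monotonicity from Lemma~\ref{lem:cmonot}. Assume $\bP^*_0 > 0$. The two identities $\sum_{n \geq 1} \bP^*_n(\Omega^n) = 1 - \bP^*_0 < 1$ and $\sum_{n \geq 1} n\,\bP^*_n(\Omega^n) = \rho(\Omega) > 1$ force the existence of some $N \geq 2$ with $\bP^*_N \neq 0$ (otherwise the second sum would be bounded by the first). I would then apply Lemma~\ref{lem:cmonot} to the pair $(N, K) = (N, 0)$ with the subsets $I = \{1, \ldots, N-1\}$, $J = \emptyset$, which furnishes some $X = (x_1, \ldots, x_N) \in \Omega^N$ with
\[ c_{N-1}(x_1, \ldots, x_{N-1}) + c_1(x_N) \geq c_N(x_1, \ldots, x_N) + c_0. \]
Using $c_0 = c_1 = 0$ together with the pointwise identity
\[ c_N(x_1, \ldots, x_N) - c_{N-1}(x_1, \ldots, x_{N-1}) = \sum_{j=1}^{N-1} c_2(x_j, x_N) > 0, \]
this inequality has no solution on $\Omega^N$, yielding the desired contradiction.

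I do not anticipate a genuine obstacle here; both claims rest on combining $c_0 = c_1 = 0$ with the pointwise strict positivity of $c_2$. The only small subtlety is confirming that Lemma~\ref{lem:cmonot} remains meaningful in the degenerate case $K = 0$: then $\bP_K$ is simply a positive scalar, but the statement still asserts the existence of some $X \in \Omega^N$ realizing the monotonicity inequality, which is all that is required to reach the contradiction.
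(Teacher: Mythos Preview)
Your proposal is correct and follows essentially the same approach as the paper: for part~(1) you verify directly that $\bP^*$ achieves zero cost and that any competitor with $\bQ_n\neq0$ for some $n\geq2$ has strictly positive cost, while for part~(2) you combine the existence of some $N\geq2$ with $\bP^*_N\neq0$ (forced by $\rho(\Omega)>1$) with the $\bc$-monotonicity of Lemma~\ref{lem:cmonot} applied to $K=0$, which fails because splitting off any subset of points strictly lowers a pairwise cost with $c_2>0$. The paper's proof is the same, only phrased with a general split $I,I^c$ rather than your specific choice $I=\{1,\dots,N-1\}$.
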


\begin{proof}

If $\rho(\Omega)\leq1$, then we have $\cC(\rho)=0$ for the mentioned probability, which is the only possible one in $\PiGC(\rho)$ supported on $\{0,1\}$. Probabilities with $\bP_n\neq0$ for some $n\geq2$ all have a positive cost since $c_2>0$ by assumption.
If $\rho(\Omega)>1$ then a minimizer has at least one $\bP_N^*\neq0$ with $N\geq2$. But for a pairwise potential as in~\eqref{eq:pairwise2} with $c_2>0$, we have
$$c_N(X)=c_{|I|}(X_I) + c_{N-|I|}(X_{I^c})+\sum_{i\in I}\sum_{j\in I^c}c_2(x_i,x_j)>c_{|I|}(X_I) + c_{N-|I|}(X_{I^c}).$$
Thus~\eqref{eq:cmonot} cannot hold for $N\geq2$ and $K=0$ and we deduce that $\bP^*_0=0$.
\end{proof}

Next we discuss the interpretation of the $\bc$-monotonicity~\eqref{eq:cmonot} in spirit of charged particles. This will guide our analysis in the Coulomb case and allow us to adapt some tools developed in this context~\cite{Lieb-84,FraKilNam-16,FraNamBos-18}. For a pairwise cost as in~\eqref{eq:pairwise2}, the inequality~\eqref{eq:cmonot} can be rewritten in the form
\begin{multline}
 \sum_{i\in I}\sum_{k\in J}c_2(x_i,y_k)+\sum_{i\in I^c}\sum_{k\in J^c}c_2(x_i,y_k)\\
 \geq \sum_{i\in I}\sum_{j\in I^c}c_2(x_i,x_j)+\sum_{k\in J}\sum_{\ell\in J^c}c_2(y_k,y_\ell).
 \label{eq:cmonot_c2}
\end{multline}
Let us introduce a new cost for two kinds of agents (with positions $x_i$ and $y_k$) with a flipped sign for their mutual interaction
\begin{equation}
W_{N+K}(X;Y):=\sum_{1\leq i<j\leq N}c_2(x_i,x_j)+\sum_{1\leq k<\ell\leq K}c_2(y_k,y_\ell)-\sum_{i=1}^N\sum_{k=1}^Kc_2(x_i,y_k).
\label{eq:cost_charged}
\end{equation}
We should think here that the $x_i$ (resp. $y_k$) repel each other with the interaction $c_2$, but are attracted to the $y_k$ with the interaction $-c_2$. If $c_2$ is the Coulomb cost, then this is exactly the situation when we have $N$ electrons of charge $-1$ located at $x_1,...,x_N$ and $K$ nuclei of charge $+1$ located at $y_1,...,y_K$. After exchanging $J$ with $J^c$, the $\bc$-monotonicity condition~\eqref{eq:cmonot} can then be rewritten in the form
\begin{equation}
W_{N+K}(X;Y)\leq  W_{N'+K'}(X_I;Y_J)+W_{N-N'+K-K'}(X_{I^c};Y_{J^c})
\label{eq:cmonot_pairwise}
\end{equation}
with $N'=|I|$ and $K'=|J|$. This means that the cost should always increase when the system is split in two independent subsystems, on the support of an optimal multi-plan. In physical terms, $\bP_N\otimes \bP_K$ must always be supported on configurations where the two kinds of particles are bound together.

Our guiding principle in the rest of the section will be to contradict~\eqref{eq:cmonot_pairwise} for $N\gg K$. That is, we will show that for any $K\geq0$ there exists an $N(K)$ so that for any $N> N(K)$ and any $X=(x_1,...,x_N)\in\Omega^N$, $Y=(y_1,...,y_K)\in\Omega^K$,~\eqref{eq:cmonot_pairwise} fails for one choice of $I,J$. In this case, we deduce immediately that all the minimizers $\bP^*$ for $\cC(\rho)<\ii$ and $\rho(\Omega)>1$ satisfy
$${\rm supp}(\bP^*)\subset \left[1,\max_{0\leq K\leq \lfloor\rho(\Omega)\rfloor}N(K)\right].$$
This is because we know that $\bP_K\neq0$ for at least one $K\leq\lfloor\rho(\Omega)\rfloor$, by definition of $\rho$.

\subsubsection{Bounded repulsive cost}
One simple situation is when the two-agent cost is bounded both from above and below, for instance a continuous positive cost over a bounded domain.

\begin{theorem}[Support for bounded repulsive costs]\label{thm:support_bounded}
Let $\Omega\subset \R^d$ and $\bc=(c_n)_{n\geq0}$ be a pairwise cost  as in~\eqref{eq:pairwise2} with $c_2$ a lower semi-continuous function such that
$0<m\leq c_2\leq M<\ii$
on $\Omega^2$. Then $\cC(\rho)$ is finite for any non-negative measure $\rho$. When $\rho(\Omega)>1$, a minimizer $\bP^*$ satisfies
\begin{equation}
 {\rm supp}(\bP^*)\subset
\left[\frac{m}M \lfloor\rho(\Omega)\rfloor\;,\; 1+\frac{M}{m}(\lceil\rho(\Omega)\rceil-1)\right].
 \label{eq:support_bounded_cost}
\end{equation}
When $\rho(\Omega)\leq1$, we have $\cC(\rho)=0$ with the unique minimizer given by~\eqref{eq:Nleq1}.
\end{theorem}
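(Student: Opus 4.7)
My plan is to combine the grand-canonical $\bc$-monotonicity of Lemma~\ref{lem:cmonot} with the uniform two-sided bounds $m\leq c_2\leq M$ to turn the pairwise optimality constraints into \emph{linear} constraints on pairs of integers in $\supp(\bP^*)$. Since Lemma~\ref{lem:repulsive_N1} already covers $\rho(\Omega)\leq 1$, I focus on $\rho(\Omega)>1$ throughout.

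The preliminary steps are short. For finiteness I would decompose $\rho=\alpha\rho_{N-1}+(1-\alpha)\rho_{N}$ with $N:=\lceil\rho(\Omega)\rceil$ and $\rho_n(\Omega)=n$, use the associated canonical trial state, and observe that its total cost is bounded by $M\binom{N}{2}<\ii$. The lower bound $c_2\geq m>0$ makes $\bc$ super-stable (this is already noted in the preamble of the section), so Theorem~\ref{thm:existence} yields a minimizer $\bP^*$, and Lemma~\ref{lem:repulsive_N1} then forces $\bP_0^*=0$.

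The heart of the proof is the following. Pick any $K,N\in\supp(\bP^*)$ with $1\leq K<N$ and apply Lemma~\ref{lem:cmonot} with $I=\{1,\ldots,L\}$ and $J=\emptyset$ for an arbitrary $1\leq L<N$. Expanding according to~\eqref{eq:pairwise2} and cancelling terms, the monotonicity inequality reduces to
$$\sum_{i=L+1}^{N}\sum_{k=1}^{K} c_2(x_i,y_k)\;\geq\;\sum_{i=1}^{L}\sum_{j=L+1}^{N}c_2(x_i,x_j),$$
which must hold $\bP_N^*\otimes\bP_K^*$-almost everywhere. Bounding the left side above by $M(N-L)K$ and the right below by $mL(N-L)$ and dividing out $N-L>0$ gives $MK\geq mL$. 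Choosing $L=N-1$ then yields the key estimate
$$N-1\leq \tfrac{M}{m}K,\qquad\text{equivalently}\qquad K\geq \tfrac{m}{M}(N-1),$$
valid for every pair $K<N$ of elements of $\supp(\bP^*)$.

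To conclude, I would set $K_0:=\min\supp(\bP^*)$ and $N_0:=\max\supp(\bP^*)$ and read off the two bounds in~\eqref{eq:support_bounded_cost} via the density identities $\sum_n\bP_n^*(\Omega^n)=1$ and $\sum_n n\,\bP_n^*(\Omega^n)=\rho(\Omega)$. When the support is not a singleton, these identities force $K_0<\rho(\Omega)<N_0$, so $K_0\leq\lceil\rho(\Omega)\rceil-1$ and $N_0-1\geq\lfloor\rho(\Omega)\rfloor$; plugging these into the key estimate gives exactly~\eqref{eq:support_bounded_cost}. The only delicate point I anticipate is the singleton degeneracy, which can only happen when $\rho(\Omega)\in\N$ and $K_0=N_0=\rho(\Omega)$; in that case~\eqref{eq:support_bounded_cost} collapses to $\tfrac{m}{M}\rho(\Omega)\leq\rho(\Omega)\leq 1+\tfrac{M}{m}(\rho(\Omega)-1)$, both trivially true from $m\leq M$.
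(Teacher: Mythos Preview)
Your proof is correct and takes essentially the same approach as the paper: both use Lemma~\ref{lem:cmonot} together with the two-sided bounds on $c_2$ to obtain the linear constraint $N\leq 1+\tfrac{M}{m}K$ for any $K<N$ in $\supp(\bP^*)$, and then read off~\eqref{eq:support_bounded_cost} from the elementary fact that $K_0\leq\lceil\rho(\Omega)\rceil-1$ and $N_0\geq\lfloor\rho(\Omega)\rfloor+1$. The only cosmetic differences are your choice $(I,J)=(\{1,\dots,N-1\},\emptyset)$ versus the paper's $(I,J)=(\{i\},\{1,\dots,K\})$---these are complementary and yield the same inequality---and your use of a canonical trial state instead of a Poisson state for finiteness.
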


\begin{proof}
Using a Poisson state as in Example~\ref{ex:Poisson} we find
$$0\leq \cC(\rho)\leq \frac12\iint_{\Omega\times\Omega}c_2(x,y)\,{\rm d}\rho(x)\,\,{\rm d}\rho(y)\leq \frac{M}2\rho(\Omega)^2.$$
Thus $\cC(\rho)$ is always finite and admits minimizers. From~\eqref{eq:cmonot_c2}, we have
\begin{align*}
 \sum_{i\in I}\sum_{k\in J}c_2(x_i,y_k)+\sum_{i\in I^c}\sum_{k\in J^c}c_2(x_i,y_k)&\leq M\big(N'K'+(N-N')(K-K')\big) \\
\sum_{i\in I}\sum_{j\in I^c}c_2(x_i,x_j)+\sum_{k\in J}\sum_{\ell\in J^c}c_2(y_k,y_\ell)&\geq m\big(N'(N-N')+K'(K-K')\big)
\end{align*}
with $|I|=N'$ and $|J|=K'$. Thus $\bP_N,\bP_K\neq0$ implies
$$N'(N-N')+K'(K-K')\leq \frac{M}{m}\big(N'K'+(N-N')(K-K')\big)$$
for all $N'\in\{0,...,N\}$ and $K'\in\{0,...,K\}$. To get a simple bound we can for instance choose $K'=K$ and $N'=1$, which gives
$$N\leq \frac{M}{m}K+1.$$
This shows that~\eqref{eq:cmonot_c2} can only hold for such $K$ and $N$.

Let $\bP^*$ be a minimizer for $\cC(\rho)$ with $\rho(\Omega)>1$ and, in the case that $\rho(\Omega)\in\N$, assume in addition that $\bP^*$ has a non trivial support (see Example~\ref{ex:usual}). Then we must have $\bP^*_K,\bP^*_N\neq0$ for some smallest $1\leq K\leq\lceil\rho(\Omega)\rceil-1$ and some $N\geq \lfloor\rho(\Omega)\rfloor+1$.
Recall that $K\geq1$ by Lemma~\ref{lem:repulsive_N1}. From the previous bound we have
$$\lfloor\rho(\Omega)\rfloor+1\leq N\leq \frac{M}{m}K+1\leq \frac{M}{m}(\lceil\rho(\Omega)\rceil-1)+1$$
and this gives the stated estimate.
\end{proof}

\begin{example}[Constant pairwise cost]
When $c_2$ is a positive constant,
\begin{equation*}
\bP(\bc)=\frac{c_2}{2}\sum_{n\geq0}n^2\bP_n(\Omega^n)-\frac{c_2}{2}\rho(\Omega)\geq \frac{c_2}{2}\rho(\Omega)(\rho(\Omega)-1),
\end{equation*}
for every $\bP\in\PiGC(\Omega)$, with equality if and only if $\bP$ is canonical, that is, $\bP_n\equiv0$ for all $n$ but one. Thus when $\rho(\Omega)=N$ is an integer, minimizers are all canonical probabilities. This follows also from Theorem~\ref{thm:support_bounded}. If $\rho(\Omega)=N+t$ is not an integer, the result tells us that the optimizers are exactly given by $\bP=(0,...,\bP_N,\bP_{N+1},0,...)$ with $\bP_N(\Omega^N)=1-t$ and $\bP_{N+1}(\Omega^{N+1})=t$ and, of course, $\rho_{\bP_N}+\rho_{\bP_{N+1}}=\rho$. Thus
$$\cC(\rho)=\frac{\rho(\Omega)(\rho(\Omega)-1)}{2}-\frac{t(t-1)}{2},\qquad t=\rho(\Omega)-\lfloor\rho(\Omega)\rfloor.$$
\end{example}

\subsubsection{Unbounded repulsive cost with a control on $1/c_2$}
Our next goal is to handle costs $c_2$ which are not necessarily bounded from above and below, but are still positive. The main point is to control the speed at which it goes to zero at infinity. In this section we use a kind of triangle inequality for $1/c_2$, up to a multiplicative constant whereas in Section~\ref{sec:asymp_doubling} we will use the different concept of ``asymptotically doubling'' functions. In Section~\ref{sec:Coulomb} we give better quantitative estimates for the special case of the Coulomb potential $c_2(x,y)=|x-y|^{-1}$ in any dimension.

\begin{theorem}[Support for unbounded pairwise positive costs]\label{thm:support_triangle}
Let $\Omega\subset \R^d$ and $\bc=(c_n)_{n\geq0}$ be a pairwise cost  as in~\eqref{eq:pairwise2} with $c_2$ a positive lower semi-continuous function such that $1/c_2$ satisfies the triangle-type inequality
\begin{equation}
 \frac1{c_2(x,y)}\leq Z\left(\frac1{c_2(x,z)}+\frac1{c_2(z,y)}\right),\qquad \forall x,y,z\in\Omega
 \label{eq:triangle}
\end{equation}
for some constant $Z>0$. Then for any non-negative measure $\rho$ such that $\cC(\rho)<\ii$ and $\rho(\Omega)>1$, a minimizer $\bP^*$ has a compact support in
\begin{equation}
{\rm Supp}(\bP^*)\subset\left[\frac{\lfloor\rho(\Omega)\rfloor+1}{2Z+1}\,,\,(2Z+1)(\lceil\rho(\Omega)\rceil-1)\right].
\label{eq:support_triangle}
\end{equation}
\end{theorem}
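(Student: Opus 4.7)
My plan is to reduce the theorem to the single claim
\[
(\star)\qquad N,K \in \supp(\bP^*)\text{ with } K < N \;\implies\; N \leq (2Z+1)K.
\]
By Lemma~\ref{lem:repulsive_N1} we have $\bP^*_0 = 0$ since $\rho(\Omega) > 1$, and the normalization $\sum_n \bP^*_n(\Omega^n) = 1$ together with $\sum_n n\,\bP^*_n(\Omega^n) = \rho(\Omega)$ then forces, unless $\supp(\bP^*) = \{\rho(\Omega)\}$ (in which case~\eqref{eq:support_triangle} is easily checked using $Z\geq 1$), the existence of some $K_{\min} \leq \lceil\rho(\Omega)\rceil - 1$ and some $K_{\max} \geq \lfloor\rho(\Omega)\rfloor + 1$ in the support. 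Applying $(\star)$ to $(K_{\min}, K_{\max})$ then gives $K_{\max} \leq (2Z+1) K_{\min}$, which rearranges to both ends of~\eqref{eq:support_triangle}.

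For $(\star)$, I invoke the grand-canonical $\bc$-monotonicity of Lemma~\ref{lem:cmonot}: pick $X \in {\rm supp}(\bP^*_N)$ and $Y \in {\rm supp}(\bP^*_K)$ on which~\eqref{eq:cmonot_c2} holds for every $I \subset \{1,\ldots,N\}$, $J \subset \{1,\ldots,K\}$. Write $d := 1/c_2$, which by~\eqref{eq:triangle} is a $Z$-quasi-metric, and let $(i_*,k_*)$ minimize $d(x_i, y_k)$, setting $R := d(x_{i_*}, y_{k_*})$. The central application of~\eqref{eq:cmonot_c2} is $I = \{i_*\}$, $J = \{1,\ldots,K\}$ (``ionize the innermost electron''), which reads
\[
\sum_{k=1}^K c_2(x_{i_*}, y_k) \;\geq\; \sum_{j\neq i_*} c_2(x_{i_*}, x_j).
\]
Minimality of $R$ bounds the left side by $K/R$; for the right side, the quasi-triangle with pivot $y_{k_*}$, together with $d(y_{k_*}, x_j) \geq R$, gives $d(x_{i_*}, x_j) \leq 2Z\,d(y_{k_*}, x_j)$, hence $c_2(x_{i_*}, x_j) \geq c_2(y_{k_*}, x_j)/(2Z)$. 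Rearranging produces the local concentration estimate
\[
\sum_{i=1}^N c_2(x_i, y_{k_*}) \;\leq\; \frac{2ZK+1}{R}.
\]

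To promote this bound on the ``charge seen by $y_{k_*}$'' into the global statement $(\star)$, my plan is to group the electrons via the Voronoi partition of $\{x_1, \ldots, x_N\}$ by closest nucleus in $d$, and to show that each Voronoi cell carries at most $2Z+1$ electrons by running an ionization-type argument at the \emph{outermost} electron of each cell. Within a single cell the $Z$-quasi-triangle inequality makes pairwise electron–electron distances comparable to the electron–nucleus distances, which effectively reduces the cell problem to the single-nucleus case $K=1$, where the outermost-electron monotonicity delivers exactly the sharp bound $2Z+1$. The main obstacle is to control contributions to the monotonicity coming from nuclei outside the cell: a direct use of~\eqref{eq:cmonot_c2} with $J = \{1,\ldots,K\}$ over-counts by a factor of $K$ through the bound $F_{i} \leq K/R_{i}$, producing only a polynomial-in-$K$ estimate. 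Obtaining the sharp linear factor requires instead applying~\eqref{eq:cmonot_c2} with $J$ equal to the cell's own nucleus together with the $y$-side of the triangle inequality~\eqref{eq:triangle}, which prevents the nuclei from bunching too closely in $d$ and ensures that cross-cell leakage in $F_{i}$ is controlled.
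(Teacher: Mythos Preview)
Your reduction to $(\star)$ and the derivation of the local concentration estimate $\sum_i c_2(x_i,y_{k_*})\leq (2ZK+1)/R$ are both correct. The gap is in the last paragraph: the Voronoi argument, as you yourself diagnose, only yields $|V_k|\leq 2ZK+1$ per cell and hence $N\leq K(2ZK+1)$, quadratic in $K$. Your proposed repair --- using $J=\{k\}$ and ``the $y$-side of the triangle inequality'' to control cross-cell leakage --- is not carried out, and I do not see how to make it work: with $J=\{k\}$ the monotonicity \eqref{eq:cmonot_c2} reads
\[
c_2(x_{i_0},y_k)+\sum_{j\neq i_0}\sum_{\ell\neq k}c_2(x_j,y_\ell)\geq \sum_{j\neq i_0}c_2(x_{i_0},x_j)+\sum_{\ell\neq k}c_2(y_k,y_\ell),
\]
which brings in the full electron--nucleus matrix on the left and does not obviously localize to the cell $V_k$. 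No amount of quasi-triangle juggling on the $y$-side seems to compensate for the fact that the left side scales like $NK$ while the goal is a bound linear in $K$.

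The paper's argument avoids this entirely through Lieb's trick: from the single-electron ionization $\sum_{j\neq i}c_2(x_i,x_j)\leq V(x_i):=\sum_k c_2(x_i,y_k)$ (your step with $I=\{i\}$, $J=\{1,\dots,K\}$, but now for \emph{every} $i$), divide by $V(x_i)$ and sum over $i$. This produces $\sum_{i<j}\frac{V(x_i)+V(x_j)}{V(x_i)V(x_j)}c_2(x_i,x_j)\leq N$; the triangle inequality \eqref{eq:triangle} is then applied inside the sum to bound each summand below by $\tfrac1Z\sum_k \frac{c_2(x_i,y_k)c_2(x_j,y_k)}{V(x_i)V(x_j)}$, and Cauchy--Schwarz in $k$ gives $\tfrac1{2Z}(N^2/K-N)\leq N$, i.e.\ $N\leq(2Z+1)K$. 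The division by $V(x_i)$ is the key normalization that makes the estimate scale correctly --- it replaces your geometric localization by an analytic one, and is what your proposal is missing.
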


The result is inspired of a celebrated paper by Lieb~\cite{Lieb-84} on the maximum number of electrons that a molecule can bind. Theorem~\ref{thm:support_triangle} can be applied to the 3D Coulomb potential where $c_2(x,y)=|x-y|^{-1}$ in $d=3$, or more generally to Riesz costs where $c_2(x,y)=|x-y|^{-s}$, $s>0$ in any dimension $d$. We obtain the inequality~\eqref{eq:support_triangle} with $Z=\max\left(1,2^{s-1}\right)$.

On the other hand, for a cost satisfying $0<m\leq c_2\leq M<\ii$ as in Theorem~\ref{thm:support_bounded} the inequality~\eqref{eq:triangle} holds with $Z=M/(2m)$
and we obtain a bound slightly worse than~\eqref{eq:support_bounded_cost}.

\begin{proof}[Proof of Theorem~\ref{thm:support_triangle}]
Let $N,K\geq1$ and $X=(x_1,...,x_N)\in\Omega^N$, $Y=(y_1,...,y_K)\in\Omega^K$. Let us introduce the potential
$V(x)=\sum_{k=1}^{K} c_2(x,y_{k}).$
The $\bc$-monotonicity~\eqref{eq:cmonot} for $I=\{i\}$ and $J=\{1,...,K\}$ can be rewritten similarly to~\eqref{eq:cmonot_c2} in the form
$\sum_{j \neq i} c_2(x_i,x_j) \leq V(x_i).$
Dividing by $V(x_i)$ and summing over $i$, we obtain the relation
$$ \sum_{i=1}^{N} \frac 1{V(x_i)} \sum_{j \neq i} c_2(x_i,x_j)=\sum_{1\leq i < j\leq N} \frac{ V(x_i) + V(x_j)}{V(x_i)V(x_j)} c_2(x_i,x_j) \leq N.$$
We bound this term using the triangle inequality~\eqref{eq:triangle} as follows:
\begin{align}
& \sum_{1\leq i < j\leq N} \frac{ V(x_i) + V(x_j)}{V(x_i)V(x_j)}c_2(x_i,x_j)\nonumber\\
&\qquad  = \sum_{1\leq i < j\leq N}  \frac {c_2(x_i,x_j)}{V(x_i)V(x_j)} \sum_{k=1}^K c_2(x_i,y_{k})c_2(x_j,y_{k})\left(\frac{1}{c_2(x_i,y_{k})}+\frac1{c_2(y_{k},x_j)}\right) \nonumber\\
&\qquad  \geq \frac 1Z \sum_{k=1}^K  \sum_{1\leq i < j\leq N} \frac {c_2(x_i,y_{k})c_2(x_j,y_{k})} { V(x_i) V(x_j) } \label{eq:use_triangle}\\
&\qquad  = \frac 1{2Z}  \sum_{k=1}^K  \left[ \left( \sum_{i=1}^N \frac{c_2(x_i,y_{k})}{ V(x_i)} \right)^2 - \sum_{i=1}^N \frac{c_2(x_i,y_{k})^2}{V(x_i)^2} \right].\nonumber
\end{align}
By convexity we have
$$ \sum_{k=1}^K  \left( \sum_{i=1}^N \frac{c_2(x_i,y_{k})}{ V(x_i)} \right)^2 \geq \frac 1{K} \left(\sum_{k=1}^K \sum_{i=1}^N \frac{c_2(x_i,y_{k})}{ V(x_i)} \right)^2 = \frac {N^2}{K}.$$
On the other hand, we have $ V(x_i)^2 \geq \sum_{k=1}^K c_2(x_i,y_k)^{2}$ with a strict inequality when $K>1$, and thus
$$\sum_{k=1}^K \sum_{i=1}^N \frac {c_2(x_i,y_k)^2}{V(x_i)^2} \leq N. $$
So, we obtain
$$ N \geq \frac 1{2Z} \left( \frac {N^2}{K} - N \right),$$
that is, $N \leq (2Z+1)K$. When $K>1$, the inequality is strict. The rest of the proof is identical to that of Theorem~\ref{thm:support_bounded}.
\end{proof}

\subsubsection{Unbounded repulsive asymptotically doubling cost}\label{sec:asymp_doubling}
In this section we discuss a different condition on $c_2$ which controls the speed at which it goes to zero at infinity and is inspired of the work~\cite{ButChaPas-18,ColMarStr-19}.

\begin{definition}[Asymptotically doubling] \label{def:asymp_doubling}
We consider
\begin{equation}
 m(r)=\inf \{ c_2(x,y) \; : \; |x-y| \leq  r\}, \quad M(r)= \sup \{ c_2(x,y) \; : \; |x-y| \geq r \}.
 \label{eq:def_m_M}
\end{equation}
We say that $c_2$ is \emph{asymptotically doubling} if there exists $C>0$ such that, for some large enough $R$ we have
$m(2r) \geq C  M(r)$ for all $r\geq R$.
\end{definition}

The last condition is a way to quantify how fast $c_2$ can converge to zero at infinity. Indeed, if $c_2(x,y)=w(|x-y|)$ is translation-invariant, radial-decreasing and continuous, then we have simply $m(r)=M(r)=w(r)$. The condition is then that $\inf_{r\geq R} w(2r)/w(r)>0$, that is, $w$ should not converge faster than a polynomial at infinity. For instance, a Riesz cost $c_2(x,y)=|x-y|^{-s}$ with $s>0$ is asymptotically doubling with $C=2^{-s}$. The following is a generalization to the grand-canonical case of ideas from~\cite{ColMarStr-19}.

\begin{theorem}[Support for asymptotically doubling pairwise costs]\label{thm:support_asymp_doubling}
Let $\Omega\subset \R^d$ and $\bc=(c_n)_{n\geq0}$ be a pairwise cost  as in~\eqref{eq:pairwise2} with $c_2$ a positive lower semi-continuous function.  Let $\rho$ be a non-negative measure such that $\cC(\rho)<\ii$, $\rho(\Omega)>1$ and so that there exists $r>0$ with
$\kappa:=\sup_{x\in\Omega}\rho(B_r(x))<1$
(for instance $\rho$ has no atom).
Let $\bP^*=(\bP_n^*)_{n\geq0}$ be any minimizer for $\cC(\rho)$.

\medskip

\noindent $\bullet$ \emph{(Diagonal estimate).} We have for all $n\geq2$
\begin{equation}\label{eq:diagonal_estimate}
\max_{1\leq i<j\leq n}c_2(x_i,x_j) \leq \frac{\rho(\Omega)}{1-\kappa} M(r),\qquad \text{$\bP^*_n$--a.e.},
\end{equation}
where $M(r)$ is defined in~\eqref{eq:def_m_M}. In particular, if $c_2$ diverges at the origin,
$\lim_{r\to0}M(r)=+\ii,$
then $\bP^*_n$ concentrates outside of the diagonal:
$$\bP_n\left(\min_{1\leq i<j\leq n}|x_i-x_j|<\eps\right)=0,\qquad \eps=M^{-1}\left(\frac{\rho(\Omega)}{1-\kappa} M(r)\right).$$

\medskip

\noindent $\bullet$ \emph{(Support).} If in addition $c_2$ is asymptotically doubling, then $\bP^*$ has compact support:
\begin{equation}
{\rm Supp}(\bP^*)\subset\left[0\,,\, 1+\rho(\Omega)\max\left(\frac2{C^2},\frac{M(r)}{(1-\kappa)m(2R_0)}\right)\right]
\label{eq:support_asymp_doubling}
\end{equation}
where $R_0$ is the smallest radius such that $\rho(\Omega\setminus B_{R_0})\leq1/2$ and $R_0\geq R$. The parameters $R$ and $C$ are the ones from Definition~\ref{def:asymp_doubling}.
\end{theorem}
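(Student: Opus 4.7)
Both bullets rest on the grand-canonical $\bc$-monotonicity (Lemma~\ref{lem:cmonot}) specialized to the pairwise form~\eqref{eq:pairwise2}. The key inequality I plan to extract comes from~\eqref{eq:cmonot} with $I=\{i\}$ and $J=\{1,\dots,K\}$: after cancelling all terms that do not involve $x_i$ (in the spirit of~\eqref{eq:cmonot_c2}), one obtains, for every $K\geq 1$ with $\bP_K^*\neq 0$ and $\bP_n^*\otimes\bP_K^*$-a.e.\ $(X,Y)$,
\[
V_i(X) := \sum_{k\neq i} c_2(x_i, x_k) \;\leq\; \sum_{j=1}^{K} c_2(x_i, y_j).
\]
Since $c_2(x_i,x_k)\leq V_i(X)$ for every $k\neq i$, the diagonal estimate reduces to bounding $V_i(X)$ pointwise.

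For the first bullet, I will condition $Y$ on the event $A_K:=\{y_j\notin B_r(x_i)\text{ for all }j\}$, on which $c_2(x_i,y_j)\leq M(r)$ so that the key inequality gives $V_i(X)\leq K M(r)$. To obtain a $K$-independent bound I will integrate against $\bP_K^*$ and combine the bounds across $K$ by a weighted average. The three mass identities I will exploit are $\sum_{K\geq 1}\lambda_K=1$ (by Lemma~\ref{lem:repulsive_N1}, which forces $\bP_0^*=0$ when $\rho(\Omega)>1$), $\sum_K K\lambda_K=\rho(\Omega)$, and $\sum_K K\mu_K(B_r(x_i))=\rho(B_r(x_i))\leq\kappa<1$, where $\mu_K$ denotes the first marginal of $\bP_K^*$ and $\lambda_K:=\bP_K^*(\Omega^K)$. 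Balancing these three identities against the pointwise inequality should yield $V_i(X)\leq \rho(\Omega)M(r)/(1-\kappa)$, hence~\eqref{eq:diagonal_estimate}. The concentration claim follows by inverting the decreasing function $M$.

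For the support bound, assume that $c_2$ is asymptotically doubling and choose $R_0\geq R$ with $\rho(\Omega\setminus B_{R_0})\leq 1/2$. Given $X\in\mathrm{supp}(\bP_N^*)$, split the coordinates into $I_{\rm in}:=\{i:x_i\in B_{R_0}\}$ and $I_{\rm out}$. For $i\in I_{\rm in}$, every other $x_k\in B_{R_0}$ satisfies $|x_i-x_k|\leq 2R_0$, so $c_2(x_i,x_k)\geq m(2R_0)$; combined with the diagonal estimate this yields $|I_{\rm in}|-1\leq \rho(\Omega)M(r)/[(1-\kappa)m(2R_0)]$, producing one term in the maximum of~\eqref{eq:support_asymp_doubling}. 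For $I_{\rm out}$ I apply the same $\bc$-monotonicity inequality, but now choose $Y$ concentrated in the bulk $B_{R_0}$, where at least half of $\rho$ lives, and use the iterated doubling $m(2r)\geq C M(r)$ to bound $c_2(x_i,y_j)$ in terms of the pairwise costs among the outer coordinates. This produces the geometric factor $2/C^2$ and the bound $|I_{\rm out}|\leq 2\rho(\Omega)/C^2$. Adding the two contributions plus the $+1$ then yields~\eqref{eq:support_asymp_doubling}.

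The main obstacle is the weighted averaging in the diagonal estimate that delivers the sharp constant $1/(1-\kappa)$: one has to balance the pointwise bound $V_i(X)\leq K M(r)$ on the good events $A_K$ against configurations $Y$ for which some $y_j\in B_r(x_i)$, where $c_2(x_i,y_j)$ may blow up; only a careful combination of the three mass identities above against the family of $\bc$-monotonicity inequalities gives the correct constant. Once this delicate averaging is in place, the support bound is a relatively mechanical combination of the diagonal estimate with the iterated doubling inequality.
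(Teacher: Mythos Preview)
Your averaging argument for the diagonal estimate is correct and is a genuinely different route from the paper's. You use the $\bc$-monotonicity inequality $V_i(X)\le\sum_{j}c_2(x_i,y_j)$ (valid $\bP_n^*\otimes\bP_K^*$-a.e.), restrict to $A_K=\{y_j\notin B_r(x_i)\ \forall j\}$, and sum $V_i(X)\,\bP_K^*(A_K)\le K\,M(r)\,\bP_K^*(A_K)$ over $K$. With the union bound $\bP_K^*(A_K^c)\le\rho_{\bP_K^*}(B_r(x_i))$ and the identities $\sum_K\lambda_K=1$, $\sum_K K\lambda_K=\rho(\Omega)$, $\sum_K\rho_{\bP_K^*}(B_r(x_i))=\rho(B_r(x_i))\le\kappa$, you get $\sum_K\bP_K^*(A_K)\ge 1-\kappa$ and $\sum_K K\bP_K^*(A_K)\le\rho(\Omega)$, hence exactly $V_i(X)\le\rho(\Omega)M(r)/(1-\kappa)$. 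The paper instead factors this through three lemmas (a Markov-type bound, a potential-truncation trick, and then the $\bc$-monotonicity), which is more modular and yields the slightly sharper intermediate $\rho(\Omega\setminus B_r(x_i))M(r)/(1-\kappa)$, but your one-shot averaging is more direct for the stated inequality.

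Your support argument, however, has a real gap. First, splitting $N=|I_{\rm in}|+|I_{\rm out}|$ and \emph{adding} the two estimates can only produce $N\le 1+\rho(\Omega)\big(\tfrac{2}{C^2}+\tfrac{M(r)}{(1-\kappa)m(2R_0)}\big)$, not the \emph{maximum} in~\eqref{eq:support_asymp_doubling}. Second, your plan for $|I_{\rm out}|$ --- ``choose $Y$ concentrated in $B_{R_0}$ and bound $c_2(x_i,y_j)$ in terms of pairwise costs among the outer coordinates'' --- does not work as stated: points in $I_{\rm out}$ can be arbitrarily far from one another, so there is no useful lower bound on their mutual costs, and you cannot freely choose $Y$ in the bulk --- $Y$ must lie in the support of some $\bP_K^*$. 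The paper avoids both issues by a \emph{case analysis} on the configuration $X$ rather than an in/out split: if $\max_i|x_i|\le R_0$, your $I_{\rm in}$ argument already bounds all of $N$; if $|x_1|=\max_i|x_i|=R'>R_0$, one \emph{re-applies} the diagonal-estimate machinery with the large ball $B_{R'/2}(x_1)$ (which satisfies $\rho(B_{R'/2}(x_1))\le\rho(\Omega\setminus B_{R_0})\le\tfrac12$) to get $V_1(X)\le 2\rho(\Omega)M(R'/2)$, then uses $|x_1-x_j|\le 2R'$ for all $j$ together with the doubling inequality twice, $m(2R')\ge C\,M(R')\ge C\,m(R')\ge C^2 M(R'/2)$, to obtain $(N-1)C^2M(R'/2)\le 2\rho(\Omega)M(R'/2)$. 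This bounds the \emph{whole} of $N$ in the second case, so the two cases combine as a maximum. Your averaging scheme transplants verbatim to produce the bound $V_1(X)\le 2\rho(\Omega)M(R'/2)$ in this second case; what needs to change is the overall architecture (case split on the farthest point, not an additive in/out decomposition).
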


In the Coulomb case $c_2(x,y)=|x-y|^{-1}$, the estimate~\eqref{eq:diagonal_estimate} gives that all the $\bP_n^*$ are supported on
$$\left\{\min_{i\neq j}|x_i-x_j|\geq \frac{r(1-\kappa)}{\rho(\Omega)}\right\}.$$
A similar estimate was proved in the canonical case in~\cite{ButChaPas-18,ColMarStr-19}. Note that the support estimate~\eqref{eq:support_asymp_doubling} is less precise than in our other previous results.

\begin{proof}
Our proof is split into several intermediate lemmas.

\begin{lemma}\label{lem:law_large_numbers}
Let $\rho$ be a non-negative measure and $\mathbb{P} \in \PiGC(\rho)$ such that $\bP_0=0$ (this implies $\rho(\Omega)\geq1$). Let us consider a non-negative measurable function $V : \Omega \to [0, +\infty]$ such that $ \int_\Omega V(x) \, {\rm d} \rho(x) <\infty$. Then there exists $n\geq1$ with $\bP_n\neq0$ such that
\begin{equation}
 \sum_{i=1}^nV(x_i)\leq \int_\Omega V(x) \, d \rho(x)\quad\text{on a $\bP_n$--non negligible set. }
 \label{eq:law_large_numbers}
\end{equation}
\end{lemma}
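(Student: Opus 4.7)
The plan is to reduce the statement to a single integral estimate obtained by combining the symmetry of $\bP_n$ with the defining density constraint, and then to locate the non-negligible set by a standard Markov-type averaging argument.

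First, I would write $f_n(x_1,\ldots,x_n):=\sum_{i=1}^nV(x_i)$ and exploit the symmetry of $\bP_n$ under permutations: each summand $\int_{\Omega^n}V(x_i)\,\rd\bP_n$ equals $\int_\Omega V\,\rd\mu_n$, where $\mu_n$ denotes the first marginal of $\bP_n$. Summing over $i=1,\ldots,n$ gives the identity $\int_{\Omega^n}f_n\,\rd\bP_n=n\int_\Omega V\,\rd\mu_n$. This is a routine symmetrization that holds without any assumption on $\bc$.

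Second, I would invoke the grand-canonical density constraint $\rho_\bP=\rho$, which by definition reads $\rho=\sum_{m\geq1}m\,\mu_m$. Since every $\mu_m$ is a non-negative Borel measure on $\Omega$, the single summand $n\mu_n$ is dominated by $\rho$ in the sense of measures. Using $V\geq0$, this yields the pointwise integral bound
\[
\int_{\Omega^n}\sum_{i=1}^n V(x_i)\,\rd\bP_n \;=\; n\int_\Omega V\,\rd\mu_n \;\leq\; \int_\Omega V\,\rd\rho \;<\;\infty.
\]
Note that the hypothesis $\bP(\bc)<\infty$ is not used for this bound itself; it only guarantees that one is working inside a meaningful class of grand-canonical probabilities.

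Third, I would conclude by a Markov-type argument. If $\bP_n\neq 0$, then the set $A_n:=\{X\in\Omega^n:f_n(X)\leq\int V\,\rd\rho\}$ must carry positive $\bP_n$-mass, since otherwise the strict inequality $f_n>\int V\,\rd\rho$ would hold on a set of positive $\bP_n$-measure, and integrating against $\bP_n$ would contradict the bound just established. The case $\bP_n=0$ is vacuous. The main delicate point is to extract strict inequality cleanly at this step: one has to exploit the slack produced by the other levels $\mu_m$ with $m\neq n$ in the decomposition of $\rho$, since the bound $n\mu_n\leq\rho$ is strict precisely where those levels contribute. Once that observation is made, the averaging argument closes the proof.
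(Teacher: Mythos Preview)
Your first two steps reproduce exactly the paper's proof, which consists of the single line $\int_{\Omega^n}\sum_{i=1}^n V(x_i)\,\rd\bP_n=\int_\Omega V\,\rd\rho_{\bP_n}\leq\int_\Omega V\,\rd\rho$ followed by ``and therefore the statement follows.''

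The gap is in your third step. From $\int_{\Omega^n}f_n\,\rd\bP_n\leq c$ (with $c=\int V\,\rd\rho$) together with $f_n>c$ $\bP_n$-a.e., one can only deduce $\int f_n\,\rd\bP_n>c\,\bP_n(\Omega^n)$, because $\bP_n$ is a \emph{sub}-probability: its total mass $\bP_n(\Omega^n)$ may be strictly smaller than $1$. When it is, there is no contradiction, and the Markov step does not close. Your proposed fix --- exploiting slack in $n\mu_n\leq\rho$ coming from the other levels $\mu_m$ --- targets the wrong issue; the obstruction is the subnormalization of $\bP_n$, not the looseness of the marginal inequality. Concretely, take four distinct points $z_1,\dots,z_4$, set $\rho=\tfrac12\sum_{j=1}^4\delta_{z_j}$, $\bP_0=\tfrac12$, let $\bP_4$ be $\tfrac12$ times the symmetrized Dirac at $(z_1,z_2,z_3,z_4)$, and take $V\equiv1$: then $\int V\,\rd\rho=2$ while $\sum_{i=1}^4 V(x_i)=4$ on the entire support of $\bP_4$, so no $\bP_4$-non-negligible set satisfies the stated inequality. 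The paper's one-line argument elides this same issue.
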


\begin{proof}[Proof of Lemma~\ref{lem:law_large_numbers}]
We have
$$
\sum_{n\geq1} \int_{\Omega^n}\left(\sum_{i=1}^nV(x_i)\right){\rm d}\,\bP_n= \int_{\Omega} V(x) \, {\rm d} \rho(x)=\left(\sum_{n\geq1}\bP_n(\Omega^n)\right)\int_{\Omega} V(x) \, {\rm d} \rho(x)
$$
and therefore the statement follows.
\end{proof}

\begin{lemma}\label{lem:law_large_numbers_outside_ball}
Let $\rho$ be a non-negative measure and $\mathbb{P} \in \PiGC(\rho)$ such that $\bP_0=0$ (this implies $\rho(\Omega)\geq1$). Assume that $B$ is a Borel set such that $\rho(B)<1$. Let us consider a positive lower semi-continuous function $V : \Omega \to (0, +\infty]$ such that $ \int_\Omega V(x) \, {\rm d} \rho(x) <\infty$. Then there exists $n\geq1$ with $\bP_n\neq0$ such that
\begin{equation}
 \sum_{i=1}^nV(x_i)\leq \frac1{1-\rho(B)}\int_{\Omega\setminus B} V(x) \, {\rm d} \rho(x)\quad\text{on a $\bP_n$--non negligible set.}
 \label{eq:law_large_numbers_outside_ball}
\end{equation}
\end{lemma}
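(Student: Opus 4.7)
The strategy is to reduce to Lemma~\ref{lem:law_large_numbers} applied to the grand-canonical probability $\hat\bP$ obtained from $\bP$ by conditioning on the event that no agent lies in $B$; the prefactor $1/(1-\rho(B))$ will come out of the corresponding normalization.

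First I would bound from below the normalization
$$Z:=\bP_0+\sum_{n\geq 1}\bP_n\bigl((\Omega\setminus B)^n\bigr),$$
which is the total probability under $\bP$ of having no agent in $B$. By symmetry of each $\bP_n$, a union bound and the constraint $\rho_\bP=\rho$ give
$$\sum_{n\geq 1}\bP_n\bigl(\{(x_1,\ldots,x_n):\exists\,i,\ x_i\in B\}\bigr)\leq \sum_{n\geq 1}n\,\bP_n(B\times\Omega^{n-1})=\rho(B)<1,$$
so that $Z\geq 1-\rho(B)>0$.

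Next I set $\hat\bP_0:=\bP_0/Z$ and, for $n\geq 1$, define $\hat\bP_n$ as the restriction of $Z^{-1}\bP_n$ to $(\Omega\setminus B)^n$, i.e.\ $\hat\bP_n(A):=Z^{-1}\bP_n(A\cap(\Omega\setminus B)^n)$ for every Borel $A\subset\Omega^n$. Then $\hat\bP=(\hat\bP_n)_{n\geq 0}$ is a symmetric grand-canonical probability of total mass $1$, whose density is supported in $\Omega\setminus B$ and satisfies, for every Borel $A\subset\Omega\setminus B$,
$$\hat\rho(A)=\frac{1}{Z}\biggl(\bP_1(A)+\sum_{n\geq 2}n\,\bP_n\bigl(A\times(\Omega\setminus B)^{n-1}\bigr)\biggr)\leq \frac{\rho(A)}{Z}\leq \frac{\rho(A)}{1-\rho(B)}.$$
Using stability of $\bc$ together with $\bP(\bc)<\ii$, one easily checks that $\hat\bP(\bc)<\ii$ as well, since restricting each $\bP_n$ to $(\Omega\setminus B)^n$ only decreases the integral of the non-negative function $c_n+Bn+A$ furnished by the stability bound.

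It then remains to apply Lemma~\ref{lem:law_large_numbers} to $\hat\bP\in\PiGC(\hat\rho)$ with the same $V$: this gives, on a $\hat\bP_n$-non-negligible subset of $(\Omega\setminus B)^n$,
$$\sum_{i=1}^n V(x_i)\leq \int_\Omega V\,{\rm d}\hat\rho\leq \frac{1}{1-\rho(B)}\int_{\Omega\setminus B}V\,{\rm d}\rho,$$
and since $Z>0$ any $\hat\bP_n$-non-negligible set is automatically $\bP_n$-non-negligible, which is precisely~\eqref{eq:law_large_numbers_outside_ball}. The only non-trivial step is the bookkeeping behind ``$\hat\bP$ is admissible in the sense of the previous lemma''; the lower bound $Z\geq 1-\rho(B)$ is what produces the stated prefactor.
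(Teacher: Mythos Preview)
Your approach is genuinely different from the paper's, and it has a real gap.

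The paper does \emph{not} condition on the event ``no agent in $B$''. Instead it applies Lemma~\ref{lem:law_large_numbers} directly to $\bP$ with the truncated potential $V_A:=V\1_{\Omega\setminus B}+A\1_B$. This yields a $\bP_n$--non-negligible set on which
\[
\big|\{i:x_i\in B\}\big|+\tfrac1A\sum_{x_j\notin B}V(x_j)\leq \tfrac1A\!\int_{\Omega\setminus B}V\,\rd\rho+\rho(B).
\]
Choosing $A=\frac1{1-\rho(B)}\int_{\Omega\setminus B}V\,\rd\rho$ makes the right side exactly $1$; since the first summand is a non-negative integer and the second is strictly positive whenever some $x_j\notin B$, one is forced to have no point in $B$ on this set, and the desired bound follows. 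In particular, $\bP_n\big((\Omega\setminus B)^n\big)>0$ is \emph{obtained as a consequence}, not assumed.

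Your conditioning argument, by contrast, needs $\hat\bP_n\neq0$ for the specific $n$ under consideration in order to extract anything from Lemma~\ref{lem:law_large_numbers}. The union bound you use only shows that the \emph{total} mass $Z=\sum_m\bP_m((\Omega\setminus B)^m)\geq 1-\rho(B)>0$; it says nothing about a fixed $n$. It is entirely possible that $\bP_n\neq0$ while $\bP_n\big((\Omega\setminus B)^n\big)=0$: take $\Omega=\{a,b\}$, $B=\{b\}$, $\rho=\tfrac12\delta_a+\tfrac12\delta_b$, and $\bP_0=\tfrac12$, $\bP_2=\tfrac12\,\delta_a\otimes_s\delta_b$. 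Then $\rho(B)=\tfrac12<1$ and $\bP_2\neq0$, yet every configuration in the support of $\bP_2$ has a point in $B$, so $\hat\bP_2=0$ and your argument produces no $\bP_2$--non-negligible set at all. The step ``since $Z>0$ any $\hat\bP_n$--non-negligible set is $\bP_n$--non-negligible'' is correct but vacuous here, because there is no $\hat\bP_n$--non-negligible set to begin with.

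The paper's choice of the penalized potential $V_A$ is precisely the device that circumvents this: it lets you stay with the original $\bP_n$ throughout, and the integrality of the number of points in $B$ does the work that your conditioning was supposed to do.
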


\begin{proof}[Proof of Lemma~\ref{lem:law_large_numbers_outside_ball}]
Define the truncated potential $V_A=V\1_{\Omega\setminus B}+A\1_B$. From Lemma~\ref{lem:law_large_numbers} there exists $n$ with $\bP_n\neq0$ such that
\begin{equation}
 \big|B\cap\{x_1,...,x_n\}\big|+\frac1A\sum_{j=1}^nV(x_j)\1_{\Omega\setminus B}(x_j)\leq \frac1A\int_{\Omega\setminus B} V(x)\,{\rm d}\rho(x)+\rho(B)
 \label{eq:apply_Lemma_LLN}
\end{equation}
on some $\bP_n$--non-negligible set in $\Omega^n$. For
$$A=\frac1{1-\rho(B)}\int_{\Omega\setminus B} V(x)\,{\rm d}\rho(x)$$
the right side of~\eqref{eq:apply_Lemma_LLN} is equal to 1. Since $\big|B\cap\{x_1,...,x_n\}\big|$ is an integer and the second term is positive when $\big|B\cap\{x_1,...,x_n\}\big|=1$, this shows that there is no point in $B$ for this $A$ and thus, on this set,
$$\sum_{j=1}^nV(x_j)\leq \int_{\Omega\setminus B} V(x)\,{\rm d}\rho(x)+A\rho(B)=\frac1{1-\rho(B)}\int_{\Omega\setminus B} V(x)\,{\rm d}\rho(x).$$
\end{proof}

\begin{lemma}\label{lem:law_large_numbers_interaction}
Assume that $\bc$ is a pairwise cost as in~\eqref{eq:pairwise2} with $c_2$ a positive lower semi-continuous function. Let $\rho$ be a non-negative measure such that $\cC(\rho)<\ii$ and $\rho(\Omega)\geq1$. Let $\mathbb{P}^* \in \PiGC(\rho)$ be a minimizer. Assume that there exists $r>0$ such that
$\kappa:=\sup_{x\in\Omega}\rho\big(B_r(x)\big)<1.$
For every $n\geq2$ and every $i\in\{1,...,n\}$, we have $\bP_n^*$--almost everywhere
\begin{align}
 \sum_{j\neq i}c_2(x_i,x_j)&\leq \frac1{1-\rho\big(B_r(x_i)\big)}\int_{\Omega\setminus B_r(x_i)} c_2(x_i,y) \, {\rm d} \rho(y)\nonumber\\
 &\leq \frac{\rho\big(\Omega\setminus B_r(x_i)\big)M(r)}{1-\kappa}.
 \label{eq:law_large_numbers_interaction}
\end{align}
\end{lemma}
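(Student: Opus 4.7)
My plan is to combine the grand-canonical $\bc$-monotonicity of the minimizer $\bP^*$ (Lemma~\ref{lem:cmonot}) with Lemma~\ref{lem:law_large_numbers_outside_ball}, applied to a potential that isolates the $i$-th coordinate. The statement is vacuous if $\bP_n^*=0$, so assume $\bP_n^*\neq 0$; this forces $\rho(\Omega)>1$, and Lemma~\ref{lem:repulsive_N1} then yields $\bP_0^*=0$, so there exists $K\geq 1$ with $\bP_K^*\neq 0$, which I fix once and for all.

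First I apply Lemma~\ref{lem:cmonot} with $N=n$, $I=\{i\}$ and $J=\{1,\dots,K\}$ (so $|I|+|J|=K+1$ and $N+K-|I|-|J|=n-1$). Thanks to the pairwise structure of $\bc$, the interactions internal to $X\setminus\{x_i\}$ and internal to $Y$ appear on both sides and cancel, leaving
\[\sum_{j\neq i}c_2(x_i,x_j)\leq \sum_{\ell=1}^{K}c_2(x_i,y_\ell)\qquad \bP_n^*\otimes\bP_K^*\text{-a.e.}\]
By Fubini, for $\bP_n^*$-a.e.\ $X$ this inequality holds on a $\bP_K^*$-full-measure set $F_X\subset\Omega^K$.

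Next, for such an $X$ I freeze $x_i$ and apply Lemma~\ref{lem:law_large_numbers_outside_ball} to the positive lower semi-continuous potential $V(y):=c_2(x_i,y)$ and the ball $B:=B_r(x_i)$. Since $\rho(B)\leq \kappa<1$, and assuming $\int_{\Omega\setminus B}V\,\rd\rho<\ii$ (otherwise the bound is trivially true), that lemma produces a $\bP_K^*$-non-negligible set $E_X\subset\Omega^K$ on which
\[\sum_{\ell=1}^{K}c_2(x_i,y_\ell)\leq \frac{1}{1-\rho(B_r(x_i))}\int_{\Omega\setminus B_r(x_i)}c_2(x_i,y)\,\rd\rho(y).\]
The intersection $F_X\cap E_X$ still has positive $\bP_K^*$-measure, so any $Y$ in it satisfies both inequalities; chaining them gives the first claim. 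The second inequality is immediate from the definition of $M(r)$ (for $y\notin B_r(x_i)$ one has $|x_i-y|\geq r$, hence $c_2(x_i,y)\leq M(r)$) together with $1-\rho(B_r(x_i))\geq 1-\kappa$.

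The main obstacle is coordinating the two types of ``almost everywhere'' statements: $\bc$-monotonicity holds on a $\bP_n^*\otimes\bP_K^*$-full-measure set, while Lemma~\ref{lem:law_large_numbers_outside_ball} only guarantees a non-negligible set of~$Y$. Extracting a single $Y$ in both is possible precisely because $E_X$ has strictly positive $\bP_K^*$-measure, so subtracting a null set does not empty it. A minor technical caveat is that as stated Lemma~\ref{lem:law_large_numbers_outside_ball} requires $\int_\Omega V\,\rd\rho<\ii$, which may fail when $c_2$ is singular on the diagonal; this is circumvented by replaying that lemma's short proof directly, since the truncation $V_A=V\1_{\Omega\setminus B}+A\1_B$ only needs $\int_{\Omega\setminus B}V\,\rd\rho<\ii$ to be integrable against $\rho$.
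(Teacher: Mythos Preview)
Your proof is correct and follows essentially the same route as the paper: combine $\bc$-monotonicity (Lemma~\ref{lem:cmonot}) with Lemma~\ref{lem:law_large_numbers_outside_ball} applied to $V(y)=c_2(x_i,y)$, then intersect the full-measure set from the former with the non-negligible set from the latter via Fubini. The only notable difference is that the paper takes $K=n$ (which is already assumed nonzero), avoiding your detour through Lemma~\ref{lem:repulsive_N1}; your careful remark on the integrability hypothesis of Lemma~\ref{lem:law_large_numbers_outside_ball} is a point the paper leaves implicit.
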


\begin{proof}[Proof of Lemma~\ref{lem:law_large_numbers_interaction}]
By Lemma~\ref{lem:repulsive_N1} we know that $\bP_0^*=0$.
Assume that $\bP_n^*\neq0$. Applying Lemma~\ref{lem:law_large_numbers_outside_ball} to $V(x)=c_2(x,x_i)$ we deduce that there exists $n_0$ such that for every $x_i\in\Omega$
 $$\sum_{k=1}^{n_0} c_2(x_i,y_k)\leq \frac1{1-\rho\big(B_r(x_i)\big)}\int_{\Omega\setminus B_r(x_i)} c_2(x_i,y) \, {\rm d} \rho(y)$$
for all $Y=(y_1,...,y_{n_0})\in\Omega^{n_0}$ in a $\bP_{n_0}^*$--non negligible set (depending on $x_i$).
But on the support of $\bP_n^*\otimes\bP_{n_0}^*$ we have by Lemma~\ref{lem:cmonot}
\begin{multline*}
\sum_{j\neq i}c_2(x_i,x_j)=c_n(X)-c_{n-1}(X_{\{1,...,n\}\setminus\{i\}})\\
\leq c_{n_0+1}(Y,x_i)-c_{n_0}(Y)=\sum_{k=1}^{n_0}c_2(x_i,y_k)
\end{multline*}
and the result follows.
\end{proof}

Now we are able to conclude the proof of Theorem~\ref{thm:support_asymp_doubling}. For
$n\geq2$ and $1\leq i\neq j\leq n$, we have by Lemma~\ref{lem:law_large_numbers_interaction}
$$c_2(x_i,x_j)\leq \sum_{k\neq i}c_2(x_i,x_k)\leq \frac1{1-\kappa}\int_{\Omega\setminus B_r(x_i)} c_2(x_i,y) \, {\rm d} \rho(y)\leq \frac{\rho(\Omega)M(r)}{1-\kappa},$$
which is the diagonal estimate~\eqref{eq:diagonal_estimate}.

Next we discuss the support of $\bP^*$. We consider a sufficiently large $R_0$ so that the asymptotically doubling property holds for $r\geq R_0$ and
$\rho(\Omega\cap B_{R_0/2})\geq \rho(\Omega)-\frac12.$
We assume that $\bP_n^*\neq0$ and consider two cases on the support of $\bP_n^*$. The first case is when $\max(|x_i|)> R_0$. By symmetry we can assume that $|x_1|=\max(|x_i|)=R'> R_0$. Then, by the asymptotically doubling property used twice, we have since $|x_i-x_j|\leq 2R'$
$$\sum_{j=2}^nc_2(x_1,x_j)\geq (n-1)m(2R')\geq (n-1)C^2M(R'/2).$$
On the other hand, we have
$\rho(B_{R'/2}(x_1))\leq \rho(\Omega\setminus B_{R_0/2})\leq\frac12$
since $B_{R'/2}(x_1)\subset \R^d\setminus B_{R_0/2}$. Using Lemma~\ref{lem:law_large_numbers_interaction} we obtain
$$ \sum_{j=2}^nc_2(x_1,x_j)\leq \frac1{1-\rho\big(B_{R'/2}(x_1)\big)}\int_{\Omega\setminus B_{R'/2}(x_1)} c_2(x_1,y) \, {\rm d} \rho(y)\leq 2\rho(\Omega)M(R'/2)$$
$\bP_n^*$--almost everywhere. Thus on this set we get the bound
$n\leq 1+\frac{2\rho(\Omega)}{C^2}$.
On the other hand, if $\max|x_i|\leq R_0$ we can use the bound
$$(n-1)\, m(2R_0)\leq \sum_{j=2}^nc_2(x_1,x_j)\leq \frac{\rho(\Omega)M(r)}{1-\kappa}$$
and we obtain~\eqref{eq:support_asymp_doubling}.
\end{proof}

\subsection{The 3D Coulomb cost}\label{sec:Coulomb}

\subsubsection{A better estimate on the support}
Next we discuss in detail the 3D Coulomb potential
$$c_2(x,y)=\frac1{|x-y|}$$
in any dimension $d\geq1$. The bound~\eqref{eq:support_triangle} gives
$${\rm Supp}(\bP^*)\subset\left[\frac{N_0+1}{3}\,,\,3N_0-3\right],\qquad \rho(\Omega)=N_0\in\N\setminus\{1\}.$$
A natural question is to ask whether the length of the support is really of the order of $N_0$, or smaller.

The \emph{ionization conjecture} is a celebrated problem in quantum mathematical physics~\cite{Simon-00,LieSei-09} which states that a nucleus of charge $Z$ can bind at most $N\leq Z+M$ electrons. This conjecture is known to fail for bosons~\cite{BenLie-83,Solovej-90} and it should thus deeply rely on the Pauli principle, that is, on the fermionic nature of the electrons. For classical results in the same spirit, see for instance~\cite[Thm.~2.1~\&~3.1]{LieSigSimThi-88}. A natural similar conjecture in our setting would be that the support of any minimizer for $\cC(\rho)$ is included in $[\rho(\R^d)-M,\rho(\R^d)+M]$ for a universal constant $M$. This happens to be \emph{true} in dimension $d=1$ with $M=1$, as we will see in Section~\ref{sec:1D}, but \emph{wrong} in dimensions $d\geq2$, as discussed later in this section. We can however prove that the length of the support is much smaller than the average $\rho(\R^d)$, at most of the order of $\sqrt{\rho(\R^d)}$ in any dimension $d\geq1$.

\begin{theorem}[Support for Coulomb cost]\label{thm:support_Coulomb_3D}
Let $\Omega=\R^d$, $d\geq1$, and
$$c_0=c_1=0,\qquad c_n(x_1,...,x_n)=\sum_{1\leq j<k\leq n}\frac{1}{|x_j-x_k|}.$$
Then for all finite non-negative measure $\rho$ such that $\cC(\rho)<\ii$ and $\rho(\R^d)>1$, any minimizer $\bP^*$ for $\cC(\rho)$ satisfies
\begin{multline}
{\rm supp}(\bP^*)\subset\\
\left[\lfloor\rho(\Omega)\rfloor-\frac12\sqrt{8\lfloor\rho(\Omega)\rfloor+9}+\frac32\;,\;
\lceil\rho(\Omega)\rceil+\frac12\sqrt{8\lceil\rho(\Omega)\rceil-7}-\frac12\right].
 \label{eq:support_Coulomb}
\end{multline}
If $\rho(\R^d)= 2$, then we have
${\rm supp}(\bP^*)=\{2\}.$
\end{theorem}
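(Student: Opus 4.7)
The plan is to apply the grand-canonical $\bc$-monotonicity (Lemma~\ref{lem:cmonot}) to pairs of nonzero components $\bP_N^\ast,\bP_K^\ast$ of the optimizer, exploiting the fact that for the Coulomb cost $c_2(x,y)=1/|x-y|$ the reciprocal $1/c_2$ is the Euclidean distance and therefore satisfies the usual triangle inequality. As a preliminary step, Lemma~\ref{lem:repulsive_N1} gives $\bP_0^\ast=0$ since $\rho(\R^d)>1$, so $0\notin\supp(\bP^\ast)$.

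For the upper bound, let $N>K\geq 1$ with $\bP_N^\ast,\bP_K^\ast\neq 0$. Applying Lemma~\ref{lem:cmonot} with $I=\{i\}$ and $J=\{1,\ldots,K\}$ yields, on $\supp(\bP_N^\ast\otimes\bP_K^\ast)$,
\[
V(x_i):=\sum_{k=1}^{K}\frac{1}{|x_i-y_k|}\;\geq\;\sum_{j\neq i}\frac{1}{|x_i-x_j|},\qquad\forall i\in\{1,\ldots,N\}.
\]
Setting $a_i^{(k)}:=|x_i-y_k|$ and using $|x_i-x_j|\leq a_i^{(k)}+a_j^{(k)}$, the task is to combine these $NK$ triangle inequalities into the sharp quadratic relation $(N-K)(N-K-1)\leq 2K$, from which $N\leq K+\tfrac12(1+\sqrt{1+8K})$ follows. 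Specializing to $K=\lceil\rho(\Omega)\rceil-1$---the largest admissible minimum of $\supp(\bP^\ast)$ forced by the mass constraint $\sum_n n\,\bP_n^\ast(\Omega^n)=\rho(\Omega)$---produces the upper bound in~\eqref{eq:support_Coulomb}. For $K=1$ the derivation is direct: with $a_i=|x_i-y_1|$, the inequality rewrites as $\sum_{j\neq i}\frac{a_j}{a_i+a_j}\geq N-2$, and summing over $i$ via the identity $\sum_{i\neq j}\frac{a_j}{a_i+a_j}=\binom{N}{2}$ yields $\binom{N}{2}\geq N(N-2)$, i.e.\ $N\leq 3$. The lower bound follows by a parallel (but not exactly symmetric) argument starting from the dual relation $U(y_{k_0}):=\sum_i\frac{1}{|x_i-y_{k_0}|}\geq\sum_{\ell\neq k_0}\frac{1}{|y_{k_0}-y_\ell|}$ obtained from Lemma~\ref{lem:cmonot} with $|I|=N$ and $|J|=1$.

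For the special case $\rho(\R^d)=2$, the general bound already gives $\supp(\bP^\ast)\subset[1,3]$. Combined with $\bP_0^\ast=0$, normalization $\sum_n\bP_n^\ast(\Omega^n)=1$, and the mass constraint $\sum_n n\,\bP_n^\ast(\Omega^n)=2$, the support equals $\{2\}$ unless both $\bP_1^\ast$ and $\bP_3^\ast$ are nonzero. To rule out the latter, I would apply Lemma~\ref{lem:cmonot} to $\bP_3^\ast\otimes\bP_1^\ast$ with $|I|=|J|=1$, obtaining
\[
\frac{1}{|x_i-y_1|}\;\geq\;\frac{1}{|x_i-x_j|}+\frac{1}{|x_i-x_k|},\qquad\forall i\in\{1,2,3\},\ \{j,k\}=\{1,2,3\}\setminus\{i\}.
\]
Setting $a_i=|x_i-y_1|$ and applying the triangle inequality $|x_i-x_j|\leq a_i+a_j$, this rewrites as $\frac{a_j}{a_i+a_j}+\frac{a_k}{a_i+a_k}\geq 1$ for each $i$; summing over $i$ gives $3\geq 3$ by the identity $\sum_{\{i,j\}}\bigl(\frac{a_j}{a_i+a_j}+\frac{a_i}{a_i+a_j}\bigr)=\binom{3}{2}=3$. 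Hence all the inequalities saturate, forcing $|x_i-x_j|=a_i+a_j$ for every pair $\{i,j\}$; that is, $y_1$ must lie simultaneously on the three segments $[x_i,x_j]$. Placing the four points on a common line, this is geometrically impossible because the three open intervals between consecutive $x$-pairs are disjoint, contradicting $\bP_3^\ast,\bP_1^\ast\neq 0$.

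The main technical obstacle is the general case $K\geq 2$ of the quadratic bound: Theorem~\ref{thm:support_triangle} applied to Coulomb already yields the weaker estimate $N\leq 3K$, and the refinement to $(N-K)(N-K-1)\leq 2K$ requires a careful combination of the $K$ triangle inequalities available for each pair $\{i,j\}$, likely via a Cauchy--Schwarz or Jensen-type estimate that exploits the additive structure $V(x_i)=\sum_k1/a_i^{(k)}$ of the potential.
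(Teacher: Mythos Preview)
Your treatment of the case $\rho(\R^d)=2$ is correct and matches the paper's argument: equality in the triangle inequality forces $y_1$ onto all three segments $[x_i,x_j]$, which is impossible for distinct $x_i$. Your $K=1$ computation giving $N\leq 3$ is also correct.

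The genuine gap is the general inequality $(N-K)^2\leq N+K$ for $K\geq2$, which you explicitly identify as an obstacle but do not prove. Your suggested route---applying $\bc$-monotonicity only with $|I|=1$, $|J|=K$ and then combining the resulting $N$ inequalities $V(x_i)\geq\sum_{j\neq i}|x_i-x_j|^{-1}$ via Cauchy--Schwarz or Jensen---is exactly the strategy of Theorem~\ref{thm:support_triangle}, and as you note it only yields $N\leq 3K$. There is no evident sharpening of that argument to the quadratic bound, because information about the $y_k$--$y_\ell$ interactions is simply absent from those inequalities.

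The paper obtains the sharp estimate by a different mechanism, due to Frank--Killip--Nam. Instead of fixing $|I|=1$, one applies $\bc$-monotonicity with \emph{every} half-space partition: for $\nu\in\bS^{d-1}$ and $t\in\R$, take $I=\{i:x_i\cdot\nu\geq t\}$ and $J=\{k:y_k\cdot\nu\geq t\}$. Integrating the resulting inequality in $t$ uses $\int_\R \1(a\geq t)\1(b<t)\,dt=(a-b)_+$ to produce
\[
\sum_{i,k}\frac{|(x_i-y_k)\cdot\nu|}{|x_i-y_k|}\;\geq\;\sum_{i<j}\frac{|(x_i-x_j)\cdot\nu|}{|x_i-x_j|}+\sum_{k<\ell}\frac{|(y_k-y_\ell)\cdot\nu|}{|y_k-y_\ell|},
\]
and averaging over $\nu\in\bS^{d-1}$ gives $NK\geq\binom{N}{2}+\binom{K}{2}$, i.e.\ $(N-K)^2\leq N+K$. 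The crucial point is that this argument simultaneously exploits $\bc$-monotonicity for \emph{all} sizes of $I$ and $J$, not just singletons; this is what produces the symmetric term $\binom{K}{2}$ on the right and hence the improvement over $N\leq 3K$.
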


\begin{proof}
Our proof uses a method introduced in~\cite{FraKilNam-16,FraNamBos-18}. Let $N,K\geq1$ and $X=(x_1,...,x_N)\in\R^{dN}$, $Y=(y_1,...,y_K)\in\R^{dK}$ satisfying the $\bc$-monotonicity property~\eqref{eq:cmonot}. For $\nu \in \mathbb{S}^{d-1}$ and $t \in \R$, we introduce the two sets
$ A^-= \{ x \in \R^d \; : \; x \cdot \nu < t\}$ and $A^+ = \{ x \in \R^d \; : \; x \cdot \nu \geq t\}$.
Let
$I:=\big\{i\in\{1,...,N\}\; :\; x_i\in A^{+}\big\}$
and denote $X^+:=X_{I}$ as well as $X^-:=X_{I^c}$. Do the same for the $y_j$'s.
The $\bc$-monotonicity formula~\eqref{eq:cmonot} tells us that
$$ \sum_{\substack{ x_i \in A^+,\\ y_{k} \in A^-}}  \frac {1}{|x_i - y_{k}|} + \sum_{\substack{x_i \in A^- ,\\ y_{k} \in A^+ }} \frac 1{|x_i - y_{k}|} \geq \sum_{\substack{ x_i \in A^+,\\ x_j \in A^-}}  \frac 1{|x_i - x_j|} + \sum_{\substack{y_{k} \in A^- ,\\ y_{\ell} \in A^+ }} \frac 1{|y_{k} - y_{\ell}|}. $$
Integrating this inequality with respect to $t$ using
$$\int_\R \1(x\cdot \nu\geq t)\1(y\cdot \nu< t)\,dt=\big((x-y)\cdot \nu\big)_+$$
we find
$$ \sum_{i=1}^N\sum_{k=1}^K \frac { | (x_i- y_{k})\cdot \nu |}{|x_i -y_{k}|} \geq \sum_{1\leq i<j\leq N} \frac { | (x_i- x_{j})\cdot \nu |}{|x_i -x_j|} + \sum_{1\leq k<\ell\leq K} \frac { | (y_{k}- y_{\ell})\cdot \nu |}{|y_{k} -y_{\ell}|}. $$
Finally, averaging over $\nu\in\bS^{d-1}$ we find
$$ N K \geq  \frac{N(N-1)}{2}+\frac{K(K-1)}{2}$$
which is the same as
$(N-K)^2 \leq N+K$ .
If $N\geq K$ this gives
$$N\leq K+\frac12 \sqrt{1+8K}+\frac12,$$
which is better than the bound $N\leq 3K$ obtained in the proof of Theorem~\ref{thm:support_triangle}. The rest of the argument is the same as before.

When $\rho(\R^d)=2$, our estimates~\eqref{eq:support_triangle} and~\eqref{eq:support_Coulomb} both give ${\rm supp}(\bP^*)\subset[1,3]$ and we have to turn one large inequality to a strict one. Going back for instance to the proof of Theorem~\ref{thm:support_triangle} with $K=1$ and $N=3$, we notice that the triangle inequality in~\eqref{eq:use_triangle} is strict unless $y_1$ lies on the segment joining $x_i$ and $x_j$ for all $i\neq j$. This can only happen if two $x_j$'s coincide, but then the cost is infinite. Thus we get the strict inequality for $K=1$ and $N=3$ and this shows that ${\rm supp}(\bP^*)=\{2\}$ for $\rho(\R^d)=2$.
\end{proof}

\subsubsection{The case of 6 points at half filling}
When $\rho(\R^d)=3$ the bound of Theorem~\ref{thm:support_Coulomb_3D} gives ${\rm supp}(\bP^*)\subset\{2,3,4\}$ and this is actually optimal. In this section we present an example in dimension $d=2$ where ${\rm supp}(\bP^*)=\{2,4\}$. The same applies in higher dimensions by working on any plane. Note that the one-dimensional case is different, as we will see later in Section~\ref{sec:1D}.

Let us consider 6 points $x_1,...,x_6$ in the plane $\R^2$ and the uniform measure
$$\rho=\frac12\sum_{i=1}^6\delta_{x_i}.$$
That each point is occupied with the probability $1/2$ (half filling) will  play an important role. To make things explicit, we place our points as displayed in Figure~\ref{fig:losange}: Four points form a diamond of side length 1 and diagonal equal to $2t$, whereas the other two points are placed outside at a distance 1. The only parameter is the length $t\in(0,1)$. For $t=0.7$ we found that
\begin{equation}
 \bP^*_2=\frac 12 \delta_{x_1}\otimes_s\delta_{x_2},\qquad \bP^*_4=\frac 12 \delta_{x_3}\otimes_s\delta_{x_4}\otimes_s\delta_{x_5}\otimes_s\delta_{x_6}.
 \label{eq:example_N3}
\end{equation}
is the \emph{unique minimizer of the grand-canonical problem}, where $x_1,x_2$ are the two points indicated on the figure.
We explain in the forthcoming work~\cite{MarLewNen-24} that this example can be used to construct a counter-example to a famous open problem in chemistry, namely the convexity of the ground state energy with respect to the number of particles in the system~\cite[Question~7 p.~263]{Lieb-83b}.

\begin{figure}[t]
\centering
\includegraphics{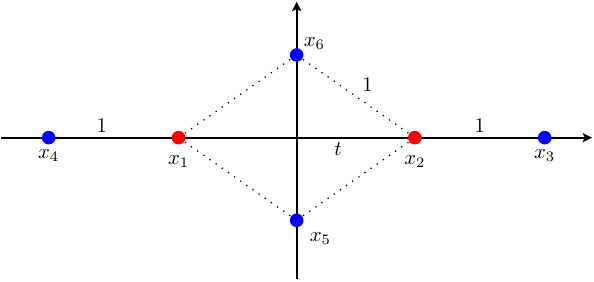}
\caption{For $t=0.7$, the measure $\rho = \frac 12 \sum_{i=1}^6 \delta_{x_i}$ has the unique optimal grand canonical probability $\bP^*_2=\delta_{(x_1,x_2)}/2$, $\bP^*_4= \delta_{(x_3,x_4,x_5,x_6)})/2$. In particular, $\cC(\rho)<\cC_3(\rho)$.\label{fig:losange}}
\end{figure}

Let us explain how we found $\cC(\rho)$ and its optimizer $\bP^*$. Since $\rho$ is supported on the 6 points, our optimal probabilities $\bP_n$ must all be supported on $\{x_1,...,x_6\}^n$. In addition, two particles can never be at the same location otherwise the cost is infinite. By symmetry, we conclude that any $\bP_n$ is a combination of the $6\choose n$ elementary probabilities consisting of putting one particle at each of the $x_{i_1},...,x_{i_n}$ with $1\leq i_1<\cdots<i_n\leq 6$. Let $p_{\sigma_1,...,\sigma_{6}}$ be the probability that there is a particle at each of the $x_j$  for $\sigma_j=1$ and none at the other points. These numbers satisfy the constraints
\begin{equation}
 \begin{cases}
\dps \sum_{\sigma_1,...,\sigma_{6}\in\{0,1\}}p_{\sigma_1,...,\sigma_{6}}=1\\[0.3cm]
\dps\sum_{\substack{\sigma_1,...,\sigma_{k-1},\\ \sigma_{k+1},...,\sigma_{6}\in\{0,1\}}}p_{\sigma_1,...,\sigma_{k-1},\sigma,\sigma_{k+1},...,\sigma_{6}}=\frac12,\qquad \forall k=1,...,6,\ \forall \sigma\in\{0,1\}.
 \end{cases}
 \label{eq:constraints_36}
\end{equation}
In other words, the 6 marginals are all Bernoulli. The total average cost is
\begin{equation}
 \bP(\bc)=\sum_{\sigma_1,...,\sigma_{6}\in\{0,1\}}p_{\sigma_1,...,\sigma_{6}}\,c_{\sigma_1,...,\sigma_{6}}=:p(c),
 \label{eq:cost_6}
\end{equation}
where
$$c_{\sigma_1,...,\sigma_{6}}:=\sum_{1\leq j<k\leq 6}\frac{\sigma_j\sigma_k}{|x_j-x_k|}.$$
Solving the minimization problem $\cC(\rho)$ is thus equivalent to finding the probabilities $p_{\sigma_1,...,\sigma_{6}}$. Note that these are not symmetric, in this way of writing. The cost $c_{\sigma_1,...,\sigma_6}$ is also not symmetric. Hence we obtain a non-symmetric $6$-marginal problem in $\{0,1\}^{6}$ with the constraints that the marginals are all Bernoulli. A similar situation has already been discussed for the Coulomb problem in~\cite{FriVog-18,Friesecke-19,KhoLinLinLex-20}. Here we know in addition that $p_{\sigma_1,...,\sigma_6}=0$ if $\sum_{j=1}^6\sigma_j\in\{0,1,5,6\}$ for a minimizer.

We can further simplify the problem by using the fact that we are working at half filling. To a probability $p=(p_{\sigma_1,...,\sigma_6})_{\sigma_1,...,\sigma_6\in\{0,1\}}$ we associate its complement $\widetilde{p}$ defined by
$$\widetilde{p}_{\sigma_1,...,\sigma_6}=p_{1-\sigma_1,...,1-\sigma_6},\qquad\forall\sigma_1,...,\sigma_6\in\{0,1\}.$$
In other words, we replace each particle by a hole and each hole by a particle. Then we have
\begin{align*}
\widetilde{p}(c)&=\sum_{\sigma_1,...,\sigma_6\in\{0,1\}}p_{\sigma_1,...,\sigma_6}\sum_{1\leq j<k\leq 6}\frac{(1-\sigma_j)(1-\sigma_k)}{|x_j-x_k|} \\
&=\sum_{1\leq j<k\leq 6}\frac{1}{|x_j-x_k|}+p(c)\\
&\qquad-2\sum_{\sigma_1,...,\sigma_6\in\{0,1\}}p_{\sigma_1,...,\sigma_6}\sum_{1\leq j< k\leq 6}\frac{\sigma_j}{|x_j-x_k|}.
\end{align*}
At half-filling the last term equals
\begin{align*}
&2\sum_{\sigma_1,...,\sigma_6\in\{0,1\}}p_{\sigma_1,...,\sigma_6}\sum_{1\leq j< k\leq 6}\frac{\sigma_j}{|x_j-x_k|}\\
&\qquad =\sum_{j=1}^6\sum_{\sigma_j\in\{0,1\}}\sigma_j\sum_{k\neq j}\frac{1}{|x_j-x_k|}\underbrace{\sum_{\sigma_1,...,\sigma_{j-1},\sigma_{j+1}\in\{0,1\}}p_{\sigma_1,...,\sigma_6}}_{=\frac12}\\
&\qquad=\sum_{1\leq j<k\leq 6}\frac{1}{|x_j-x_k|}.
\end{align*}
Therefore we have proved that the problem is invariant under the particle-hole symmetry:
$p(c)=\widetilde{p}(c).$
By linearity we can restrict our minimization to particle-hole symmetric probabilities $\widetilde{p}=p$, without changing the value of the minimum. Those symmetric probabilities all have the required density $1/2$. They form a convex set, whose extreme points are given by the elementary probabilities
\begin{equation}
 p^{I}:=\frac12\left(\prod_{i\in I}\delta_1(\sigma_i)\prod_{i\notin I}\delta_0(\sigma_i)+\prod_{i\notin I}\delta_1(\sigma_i)\prod_{i\in I}\delta_0(\sigma_i)\right)
 \label{eq:extremal_pt}
\end{equation}
consisting of placing particles in and outside a given set $\{x_i,\ i\in I\}$ indexed by $I\subset\{1,...,6\}$ with $|I|\leq 3$. In our case, we also know that we can restrict to $|I|\in\{2,3\}$. Since we are minimizing a linear function over a convex set, the minimum is attained at an extreme point and all what is left is to compute the value of the energy of each such extreme point
$$p^I(c)=\frac14\left(\sum_{j\neq k\in I}\frac{1}{|x_j-x_k|}+\sum_{j\neq k\notin I}\frac{1}{|x_j-x_k|}\right).$$
For our six particles placed as in Figure~\ref{fig:losange}, we simply computed the energy cost of all the previous extreme points~\eqref{eq:extremal_pt} as a function of $t$. The curves are displayed in Figure~\ref{fig:curves_losange}. We found that $p^{\{1,2\}}$ had a strictly lower energy than all the other extreme points for $t$ in a neighborhood of $0.7$. This shows that it is the \emph{unique minimizer among particle-hole symmetric states}.

The exact same symmetry argument applies when we restrict our attention to canonical states, which form the convex hull of the $p^I$ with $|I|=3$. The fact that $p^I(c)>p^{\{1,2\}}(c)$ for all $|I|=3$ implies that the grand canonical problem does not coincide with the canonical problem: $\cC_3(\rho)>\cC(\rho)$.

Next we prove that $p^{\{1,2\}}$ is actually the unique \emph{global} minimizer of $\cC(\rho)$. Let $p$ be any other optimizer. Then $(p+\widetilde{p})/2$ is a particle-hole symmetric minimizer and thus we must have
$\frac{p+\widetilde{p}}{2}=p^{\{1,2\}}.$
Since $p,\widetilde{p}\geq0$ and $p^{\{1,2\}}$ vanishes outside of $(1,1,0,0,0,0)$ and $(0,0,1,1,1,1)$ we conclude that $p$ must be a combination of $\delta_{(x_1,x_2)}$ and $\delta_{(x_3,x_4,x_5,x_6)}$. The constraint that the marginals of $p$ are all Bernoulli give $p=p^{\{1,2\}}$.

\begin{figure}[t]
\centering
\includegraphics[width=6cm]{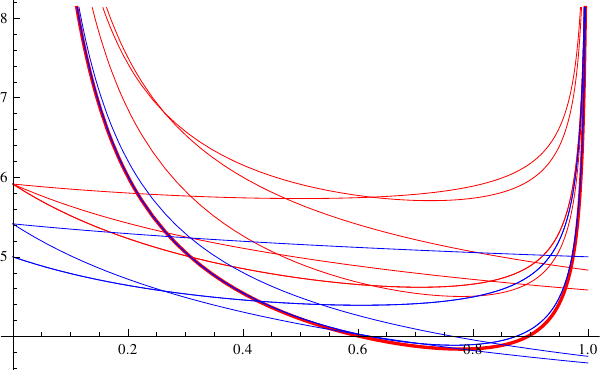}\hfill \includegraphics[width=6cm]{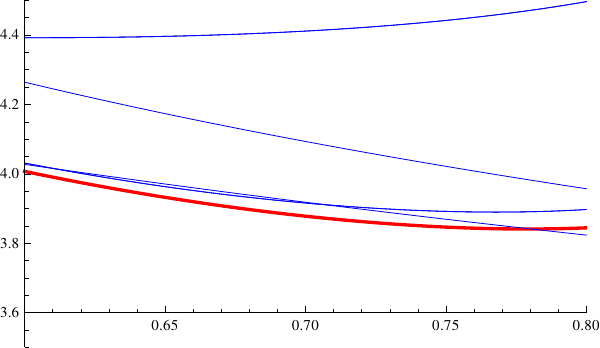}
\caption{Coulomb cost of all the extreme points~\eqref{eq:extremal_pt} for the $x_j$ as in Figure~\ref{fig:losange}, as functions of the length $t\in(0,1)$ (left), with a zoom around the value $t=0.7$ (right). The grand-canonical extreme points $(|I|=2$) are in red whereas the canonical ones $(|I|=3$) are in blue. For $t\simeq0.7$, $p^{\{1,2\}}$ has the lowest possible Coulomb cost. Its curve has been made thicker in the drawing.\label{fig:curves_losange}}

\medskip

 \includegraphics[scale=0.6]{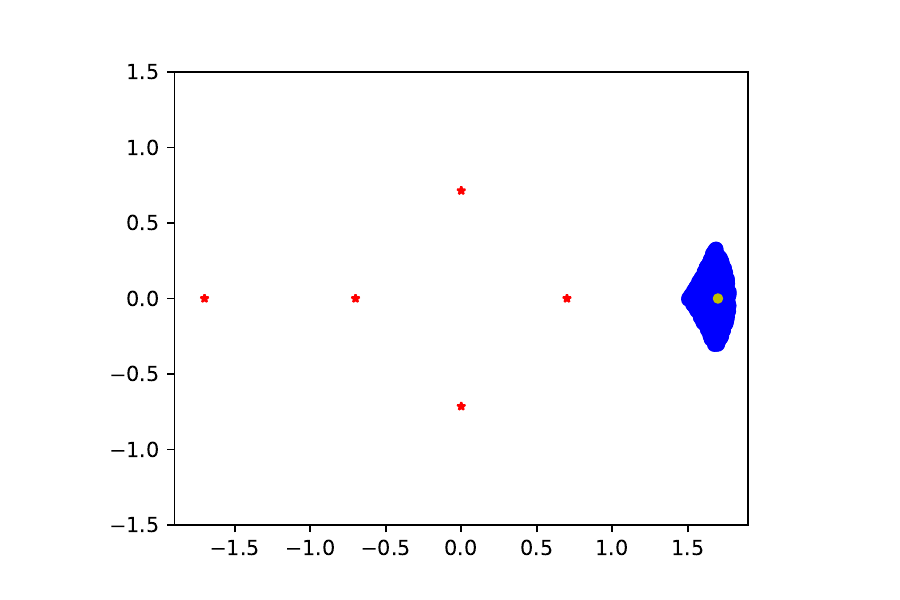}

\caption{The blue area represents represents all the positions of $x_3$ for which the configuration is grand-canonical.\label{fig:perturbation}}

\end{figure}

In Figure~\ref{fig:curves_losange} it appears that the grand-canonical optimizer $p^{\{1,2\}}$ has an energy lower but still rather close to the corresponding canonical optimizer. In addition, for most values of the half diagonal $t$ the minimizer will actually be canonical. To further illustrate the range of validity of the grand-canonical problem, we display in Figure~\ref{fig:perturbation} a different experiment. Taking $t=0.7$, we fixed 5 points and moved only the point $x_3$ further to the right. The shaded area represents all the positions of $x_3$ for which the configuration is grand-canonical.

\subsubsection{Length of the support in the Coulomb case}
Next we explain how to use the previous example to generate another example which has a support of length of the order $\rho(\Omega)^\alpha$ with $\alpha=\frac {\ln(2)}{\ln (6) } \sim 0.38 $.

\begin{theorem}[Length of the support for Coulomb]\label{thm:large_support_Coulomb}
Let $\Omega=\R^d$ and $c_2(x,y)=|x-y|^{-1}$. Let $x_1,...,x_6\in\R^2$ be such that $\cC(\sum_{j=1}^6\delta_{x_j}/2)$ admits~\eqref{eq:example_N3} as unique minimizer. From these points we can inductively construct a sequence $(y_j^{(k)})_{j=1}^{6^k}\subset \R^d$ so that, at half filling
$$\rho^{(k)}=\frac1{2}\sum_{j=1}^{6^k}\delta_{y_j^{(k)}},\qquad \rho^{(k)}(\R^d)=\frac{6^k}{2},$$
the grand-canonical problem $\cC(\rho^{(k)})$ admits a unique minimizer $\bP^{(k)}$, which satisfies
$${\rm Supp}(\bP^{(k)})= \left\{\frac{6^k- 2^k}{2}\,,\, \frac{6^k+ 2^k}{2}\right\}.$$
\end{theorem}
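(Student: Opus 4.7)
We argue by induction on $k$, the case $k=1$ being the six-point example analysed just above the theorem. Assume the statement at level $k$ and denote the unique minimizer $\bP^{(k)}=\tfrac12\bigl(\bP^{(k)}_{N_k^-}+\bP^{(k)}_{N_k^+}\bigr)$ with $N_k^\pm=(6^k\pm 2^k)/2$. Fix a small parameter $\eps=\eps_{k+1}>0$ and nest the configuration,
\[
y^{(k+1)}_{(j,i)}:=x_j+\eps\,y^{(k)}_i,\qquad 1\le j\le 6,\ 1\le i\le 6^k,
\]
so that the $6^{k+1}$ points split into six disjoint clusters $C_1,\ldots,C_6$, each an affine rescaled copy of the level-$k$ cloud sitting near $x_j$. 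Let $\bQ^{(k)}_{\pm,j}$ be the normalized pushforward of $\bP^{(k)}_{N_k^\pm}$ supported in $C_j$. The candidate is
\[
\bP^{(k+1)}:=\tfrac12\,{\rm Sym}\!\Bigl(\bQ^{(k)}_{+,1}\otimes\bQ^{(k)}_{+,2}\otimes\bQ^{(k)}_{-,3}\otimes\cdots\otimes\bQ^{(k)}_{-,6}\Bigr)+\tfrac12\,{\rm Sym}\!\Bigl(\bQ^{(k)}_{-,1}\otimes\bQ^{(k)}_{-,2}\otimes\bQ^{(k)}_{+,3}\otimes\cdots\otimes\bQ^{(k)}_{+,6}\Bigr).
\]
The two branches carry $2N_k^++4N_k^-=N_{k+1}^-$ and $4N_k^++2N_k^-=N_{k+1}^+$ particles respectively, and the additivity of the density across branches combined with $\rho_{\bP^{(k)}_{N_k^-}}+\rho_{\bP^{(k)}_{N_k^+}}=\rho^{(k)}$ (inside every cluster) gives $\rho_{\bP^{(k+1)}}=\rho^{(k+1)}$, hence $\bP^{(k+1)}\in\PiGC(\rho^{(k+1)})$ with support $\{N_{k+1}^-,N_{k+1}^+\}$.

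For the matching lower bound and uniqueness, split the Coulomb cost of an arbitrary $\bP\in\PiGC(\rho^{(k+1)})$ into intra- and inter-cluster pieces $\bP(\bc)=\sum_j I_j(\bP)+\sum_{j<j'}J_{jj'}(\bP)$. Since $|y-y'|^{-1}=|x_j-x_{j'}|^{-1}+O(\eps)$ uniformly for $y\in C_j$, $y'\in C_{j'}$,
\[
J_{jj'}(\bP)=\frac{\mathbb{E}_\bP\!\left[N_jN_{j'}\right]}{|x_j-x_{j'}|}+O(\eps),
\]
where $N_j$ is the random occupancy of $C_j$. Rescaling to the level-$k$ problem gives the intra-cluster bound $I_j(\bP)\ge\eps^{-1}\,\cC(\rho^{(k)})$, with equality only if the conditional law inside $C_j$ is (after affine unfolding) the level-$k$ optimizer $\bP^{(k)}$. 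The induction hypothesis that this optimizer is unique provides a quantitative stability gap $\delta_k>0$: any GC state at distance $\eta>0$ (in total variation) from $\bP^{(k)}$ has cost at least $\cC(\rho^{(k)})+c\,\delta_k\,\eta$. Choosing $\eps$ small enough so that $\eps$ (the inter-cluster error) is dominated by $\eps^{-1}\delta_k$ (the intra-cluster strict gain) forces any level-$(k+1)$ minimizer to have $N_j\in\{N_k^-,N_k^+\}$ almost surely and, conditionally on $\{N_j=N_k^{\sigma_j}\}$ for some $\sigma_j\in\{-,+\}$, to coincide inside each cluster with $\bQ^{(k)}_{\sigma_j,j}$.

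It remains to optimize over the joint top-level law $p(\sigma_1,\ldots,\sigma_6)$ on $\{-,+\}^6$. The density constraint $\rho_\bP=\rho^{(k+1)}$ (which integrated over $C_j$ reads $\mathbb{E}_p[N_j]=6^k/2$) is exactly the half-filling requirement $p(\sigma_j=+)=\tfrac12$ for every $j$. Writing $N_k^{\sigma_j}=6^k/2+s_j\cdot 2^{k-1}$ with $s_j=\pm1$ and expanding $N_k^{\sigma_j}N_k^{\sigma_{j'}}$, the term linear in the $s_j$'s vanishes in expectation thanks to half-filling and the effective cost to minimize becomes, up to a constant and up to the positive multiplicative factor $2^{2k}$, exactly $\sum_{j<j'}\eta_j\eta_{j'}/|x_j-x_{j'}|$ with $\eta_j:=(s_j+1)/2\in\{0,1\}$. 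This is precisely the base six-point Coulomb problem at half filling, whose unique minimizer is the assumed $p^{\{1,2\}}$; pulling back gives $\bP^{(k+1)}$ and its uniqueness. The main obstacle is the quantitative stability step: the gap $\delta_k$ could in principle deteriorate along the induction, and one must use the freedom to make $\eps_{k+1}$ arbitrarily small at each step (comparing the $O(\eps_{k+1})$ inter-cluster perturbation to the $\eps_{k+1}^{-1}\delta_k$ intra-cluster gain) to turn the heuristic domination into a rigorous bound.
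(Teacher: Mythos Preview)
Your construction is essentially the same as the paper's (the scaling $x_j+\eps\,y^{(k)}_i$ is just the rescaled version of the paper's $\ell\,X_i+x_j$ with $\ell\to\infty$), and your two-scale analysis correctly identifies the leading intra-cluster term and the sub-leading inter-cluster term, which reduces to the six-point problem. The recursion $N_{k+1}^\pm=4N_k^\pm+2N_k^\mp$ is also right.

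The difference is in how the perturbative step is made rigorous. You rely on a quantitative stability gap $\delta_k$ for the level-$k$ minimizer and then argue that $\eps_{k+1}^{-1}\delta_k$ dominates the $O(1)$ inter-cluster variation; you correctly flag this as the main obstacle but do not actually establish such a bound. The paper sidesteps this entirely by exploiting the \emph{particle-hole symmetry} at half filling: since $p\mapsto \widetilde p$ with $\widetilde p_\sigma=p_{1-\sigma}$ preserves the cost, the minimization can be restricted to particle-hole symmetric states, whose extreme points are the finitely many $p^I=\tfrac12(\delta_I+\delta_{I^c})$. The problem then becomes a finite enumeration: at leading order in $1/\ell$ only the $2^6$ extreme points with each six-block equal to $p^{\{1,2\}}$ survive (with a strict gap to the rest), and the next-order coefficient is \emph{exactly} the original six-point cost in the cluster labels, picking out a unique $I$. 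This avoids any need for a stability lemma and also cleanly yields uniqueness by the same $(p+\widetilde p)/2$ argument used for the six-point base case. Your proof would become complete (and closer to the paper's) if you inserted this symmetry reduction in place of the stability step.
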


\begin{proof}
Upon placing all the points in a plane, we can always assume that $d=2$. Our proof is a multiscale inductive construction illustrated in Figure~\ref{fig:multiscale}.

We denote by $y_i^{(1)}=x_i$ the 6 points of our example and start by explaining the first step of the procedure, that is, the construction of the $y^{(2)}_k$. We consider $6$ distinct points $X_0,...,X_5$ to be chosen later, and introduce the $36$ points
$ y^{(2)}_{6i+ j} = \ell_2 X_{i} + x_j$ for $i=0, \ldots, 5$ and $j=1, \ldots, 6$.
In other words, we place $6$ copies of our previous example, placed very far away at a distance $\ell_2\gg1$  to each other. We again work at half filling with the total density
$\rho^{(2)}:=\frac12\sum_{k=1}^{36}\delta_{y^{(2)}_{k}}.$
Our goal is to get some information on the support of an optimal grand-canonical minimizer in the limit $\ell_2\to\ii$.

Like in the previous section, we can rewrite this problem as a non-symmetric multi-marginal problem on $\{0,1\}^{36}$. We introduce the probabilities $p_{\sigma_1,...,\sigma_{36}}$ that there is a particle at $y_k^{(2)}$ for $\sigma_k=1$ and none for  $\sigma_k=0$ and rewrite our total cost as
\begin{equation}
 \bP(\bc)=p(c^{(2)})=\sum_{\sigma_1,...,\sigma_{36}\in\{0,1\}}p_{\sigma_1,...,\sigma_{36}}\,c^{(2)}_{\sigma_1,...,\sigma_{36}}
 \label{eq:cost_36}
\end{equation}
where
\begin{align*}
c^{(2)}_{\sigma_1,...,\sigma_{36}}&=\sum_{1\leq k<\ell\leq 36}\frac{\sigma_k\sigma_\ell}{|y_k^{(2)}-y_\ell^{(2)}|}\\
&=\sum_{i=0}^5\sum_{1\leq j<j'\leq 6}\frac{\sigma_{6i+j}\sigma_{6i+j'}}{|y_{6i+j}^{(2)}-y_{6i+j'}^{(2)}|}\\
&\qquad +\sum_{0\leq i< i'\leq 5}\sum_{1\leq j,j'\leq 6}\frac{\sigma_{6i+j}\sigma_{6i'+j'}}{|\ell_2(X_{i}-X_{i'})+x_j-x_{j'}|}.
\end{align*}
On the right of the last equality, the first term is the interaction between the particles in each cluster, whereas the second term is the lower order interaction between the clusters. In this formulation of the problem, our main goal is to derive an estimate on the smallest and largest value of the number of particles $\sum_{k=1}^{36}\sigma_k$ on the support of a minimizer $(p_{\sigma_1,...,\sigma_{36}})$. Its average is the total average number of particles 18 but we would like to prove that it fluctuates quite a bit around this value.

Since we are at half filling we know from the previous section that the problem is reduced to computing the energies of the particle-hole symmetric extreme points $p^I$ in~\eqref{eq:extremal_pt}, with $I\subset\{1,...,36\}$ and $|I|\leq 18$. If the lowest energy is attained at only one such point, our proof will be finished by the same arguments as in the case of $6$ points.

In the limit $\ell_2\to\ii$, our cost behaves as
\begin{multline}
c^{(2)}(\sigma_1,...,\sigma_{36})=\sum_{i=0}^5\sum_{1\leq j<j'\leq 6}\frac{\sigma_{6i+j}\sigma_{6i+j'}}{|y_{6i+j}^{(2)}-y_{6i+j'}^{(2)}|}\\
+\frac1{\ell_2}\sum_{0\leq i< i'\leq 5}\frac{\Sigma_i\Sigma_{i'}}{|X_i-X_{i'}|}+O\left(\frac1{(\ell_2)^2}\right)
\label{eq:expansion_large_ell}
\end{multline} with $\Sigma_i:=\sum_{j=1}^6\sigma_{6i+j}$,
the total number of particles in the cluster $i$. It is therefore rather easy to compute $p^I(c^{(2)})$ in this limit. The first term is minimized when
$$(\sigma_1,...,\sigma_6)\mapsto \sum_{\sigma_7,...,\sigma_{36}\in\{0,1\}}p^I_{\sigma_1,...,\sigma_{36}}$$
and the other 5 consecutive 6-marginals all coincide with our unique $6$-point minimizer
\begin{multline}
 p^*=\frac{\delta(\sigma_1=\sigma_2=1)\delta(\sigma_3=\cdots=\sigma_6=0)}{2}\\
 +\frac{\delta(\sigma_1=\sigma_2=0)\delta(\sigma_3=\cdots=\sigma_6=1)}{2}=:p^*_2+p^*_4.
 \label{eq:unique_min_6_reformulation}
\end{multline}
This simply means that
\begin{equation}
 I=I_0\cup\cdots\cup I_5,\qquad
 I_i=\begin{cases}
6i+\{1,2\}&\text{(hence $\Sigma_i=2$), or}\\
6i+\{3,4,5,6\}&\text{(hence $\Sigma_i=4$).}
     \end{cases}
 \label{eq:I_i}
\end{equation}
There are $2^6$ such extreme points. Some are canonical when three $\Sigma_i$ equal 2 and the three others equal 4. All the $p^I$ not of the form~\eqref{eq:I_i} have a higher energy.

In order to discriminate the $2^6$ extreme points, we have to look at the next order in~\eqref{eq:expansion_large_ell}. Its average value in the state $p^{I}$ is just
$$\frac12\sum_{0\leq i< i'\leq 5}\frac{\Sigma_i\Sigma_{i'}+(4-\Sigma_i)(4-\Sigma_{i'})}{|X_i-X_{i'}|}.	$$
Upon letting $\Sigma_i=2+2\tau_i$ with $\tau_i\in\{0,1\}$, this equals
$$4\sum_{0\leq i< i'\leq 5}\frac{1}{|X_i-X_{i'}|}+4\sum_{0\leq i< i'\leq 5}\frac{\tau_i\tau_{i'}}{|X_i-X_{i'}|}.$$
The second term is exactly our initial problem with 6 points. Thus we choose $X_i=x_{i+1}$ and conclude that for $\ell_2$ large enough our problem has the unique minimizer $p^I$ with
$I=I_0\cup\cdots \cup I_5$, $I_0=I_1-6=\{3,4,5,6\}$ and $I_i=6i+\{1,2\}$ for $i=2,4,5$.
A different way of writing the same is
$$p=2^5(p_2^*)^{\otimes 2}\otimes (p_4^*)^{\otimes 4}+2^5(p^*_4)^{\otimes 2}\otimes (p_2^*)^{\otimes 4}.$$

Next we repeat the argument with a similar construction at larger scales. By induction we construct a sequence $y_1^{(k)},...,y_{6^k}^{(k)}$ with
$\rho^{(k)}=\frac12\sum_{j=1}^{6^k}\delta_{y_j^{(k)}}$ and $\rho(\R^2)=\frac{6^k}{2}$
so that, in the successive limits $\ell_1,...,\ell_k\to\ii$, $\bP^{(k)}$ is unique and has its support in the sectors of $n^{(k)}_-$ and $n^{(k)}_+$ particles, with the relations
$n_-^{(k)}=2n_+^{(k-1)}+4n_-^{(k-1)}$ and $n_+^{(k)}=4n_+^{(k-1)}+2n_-^{(k-1)}$,
that is,
$$n_+^{(k)}=\frac{6^k}2+\frac{2^k}{2},\qquad n_+^{(k)}=\frac{6^k}2-\frac{2^k}{2}.$$
This concludes our proof of Theorem~\ref{thm:large_support_Coulomb}.
\end{proof}

\begin{figure}[t]
\centering
\includegraphics[width=6.3cm]{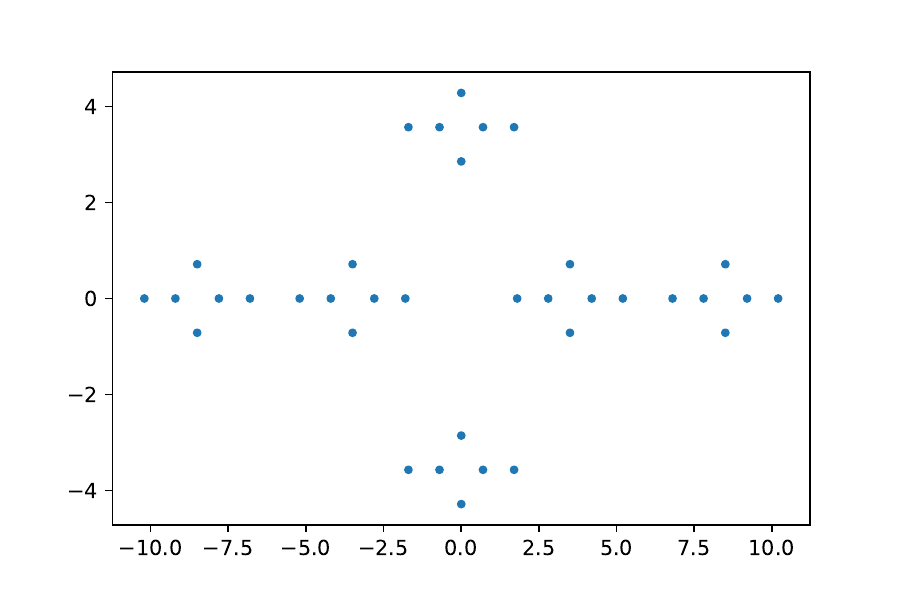}\hfill\includegraphics[width=6.3cm]{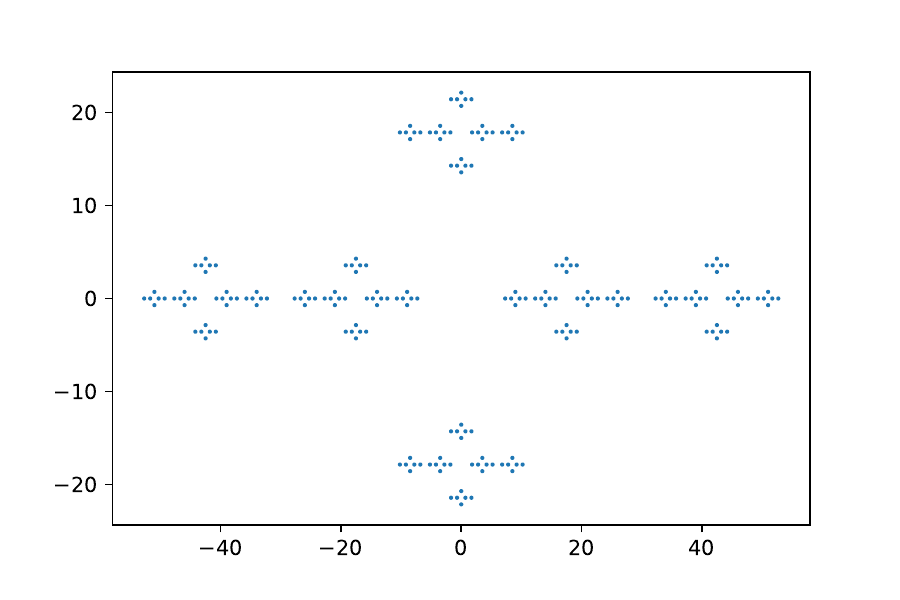}
\caption{Examples of the points $\{y_1^k , \ldots y_{6^k}^k\}$ for $k=2,3$ and the values $\ell_2=5$ and $\ell_3=25$, when we start from the configuration in Figure~\ref{fig:losange} with $t=0.7$. \label{fig:multiscale}}
\end{figure}

\subsection{Convex functions of the center of mass}\label{sec:center_mass}
We discuss a last example in the spirit of~\cite{MarGerNen-17}. Let $\Omega=\R^d$ and $\rho$ be any finite non-negative measure admitting a first moment:
$\int_{\R^d}|x|\,{\rm d}\rho(x)<\ii.$

\begin{definition}[Fixed center of mass]
We say that a $\bP=(\bP_n)_{n\geq0}\in\PiGC(\rho)$ has a \emph{fixed center of mass} whenever $\bP_n$ concentrates on the set
\begin{equation}
\cX_n(\rho):= \left\{(x_1,...,x_n)\in(\R^d)^n\ :\ \sum_{j=1}^nx_j=\int_{\R^d}x\,{\rm d}\rho(x)\right\}
 \label{eq:support_P_n_center_of_mass}
\end{equation}
for all $n\geq1$. By extension we say that a canonical $N$-particle probability $\bP_N$ has a fixed center of mass when $\bP:=(0,...,0,\bP_N,0,...)$ does.
\end{definition}

The interpretation of the definition is that the center of mass of the particles is deterministic, equal to the average against the measure $\rho$, this being true $\bP_n$--a.e.~for all $n\geq1$. The expectation against $\bP_n$ gives
$\int_{\R^d}x\,{\rm d}\rho_{\bP_n}=\bP_n(\R^{dn})\int_{\R^d}x\,{\rm d}\rho(x)$. Thus either $\bP_n=0$ or $\rho_{\bP_n}/\bP_n(\R^{dn})$ has for all $n\geq1$ the same center of mass as the total density $\rho$.

In general not all $\rho$'s admit canonical or grand-canonical multi-plans $\bP\in\PiGC(\rho)$ with a fixed center of mass. Here is a counter example adapted from~\cite[Rmk.~4.4]{MarGerNen-17}.

\begin{example}[Fixed center of mass for two points]
Consider the density $\rho=3(\delta_1+\delta_{-1})/2$ in $\Omega=\R$, which satisfies $\rho(\R)=3$ and $\int_{\R}x{\rm d}\rho(x)=0$. Then there does not exist any canonical $3$-probability of density $\rho$ with a fixed center of mass. Those are actually all of the form
$$\bP_3=p_{30}(\delta_{-1})^{\otimes 3}+p_{03}(\delta_1)^{\otimes 3}+p_{21}(\delta_{-1})^{\otimes 2}\otimes_s \delta_{1}+p_{12}\delta_{-1}\otimes_s( \delta_{-1})^{\otimes 2}$$
with $\sum p_{jk}=1$ and none of the elementary probabilities has the fixed center of mass $0$. The situation is different if we allow grand canonical probabilities. For instance the state satisfying
$\bP_0=\frac14$, $\bP_4=\frac{3}{4}(\delta_{-1})^{\otimes 2}\otimes_s (\delta_{1})^{\otimes 2}$ and $\bP_n=0$ otherwise,
belongs to $\PiGC(\rho)$ and has the fixed center of mass $0$.
However, the slightly asymmetric density $\rho=(7\delta_1+5\delta_{-1})/4$ admits no grand-canonical state $\bP\in\PiGC(\rho)$ of fixed center of mass. Indeed, we have $\int_{\R}x\,{\rm d}\rho(x)=1/2$ but any $\bP_n$ is a convex combination of the elementary probabilities
$(\delta_{-1})^{\otimes k}\otimes_s(\delta_{1})^{\otimes (n-k)}$ for $k=0,...,n$,
which has the center of mass $n-2k\in\Z$. It can never be equal to $1/2$.
\end{example}

The existence of a $\bP\in\PiGC(\rho)$ of fixed center of mass is related to a grand-canonical minimization problem.

\begin{theorem}[Convex functions of the center of mass]
Let $\Omega=\R^d$ and $\rho$ a finite measure with a finite first moment. Define
$X:=\int_{\R^d}x\,{\rm d}\rho(x)$.
Let $h:\R^d\to\R$ be a differentiable strictly convex function. Define the grand-canonical cost
\begin{equation}
c_0=h(X)-X\cdot\nabla h(X),\qquad
c_n(x_1,...,x_n)=h\left(\sum_{j=1}^nx_j\right),\quad n\geq1.
\label{eq:h_center_mass}
\end{equation}
There exists $\PiGC(\rho)$ with fixed center of mass if and only if $\cC(\rho)=h(X)$
and admits minimizers. In this case, the minimizers are exactly the $\bP\in \PiGC(\rho)$ with a fixed center of mass.
\end{theorem}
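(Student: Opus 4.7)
The plan is to build the entire argument on the single tangent-plane inequality for the strictly convex differentiable $h$ at the point $X$: for every $y\in\R^d$,
\begin{equation*}
h(y) \geq h(X) + (y-X)\cdot\nabla h(X) = c_0 + y\cdot\nabla h(X),
\end{equation*}
with equality if and only if $y=X$. The whole point of the definition of $c_0$ is to make this inequality scale correctly. I would apply it pointwise with $y=\sum_{j=1}^n x_j$, integrate against each $\bP_n$, then sum in $n\geq1$, and add the $n=0$ term $c_0\bP_0$.

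After summation, the constant contributions regroup as $c_0\bigl(\bP_0+\sum_{n\geq1}\bP_n(\R^{dn})\bigr)=c_0$ since $\bP\in\PiGC(\rho)$ is a probability. The linear contributions regroup as
\begin{equation*}
\nabla h(X)\cdot\sum_{n\geq1}\int_{\R^{dn}}\!\!\Bigl(\sum_{j=1}^n x_j\Bigr)\rd\bP_n(x_1,\dots,x_n) = \nabla h(X)\cdot\int_{\R^d}x\,\rd\rho_\bP(x) = \nabla h(X)\cdot X,
\end{equation*}
using the density constraint $\rho_\bP=\rho$ and the definition of $X$. Summing gives $\bP(\bc)\geq c_0+X\cdot\nabla h(X)=h(X)$, so that $\cC(\rho)\geq h(X)$ for every finite measure $\rho$ admitting a first moment. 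This lower bound is valid independently of existence issues.

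Next I would examine the equality case, which is where strict convexity pays off. Since the only inequality invoked was the tangent-plane bound applied pointwise, $\bP(\bc)=h(X)$ forces $h\bigl(\sum_{j=1}^n x_j\bigr)=c_0+\bigl(\sum_{j=1}^n x_j\bigr)\cdot\nabla h(X)$ to hold $\bP_n$-almost everywhere for every $n\geq1$ with $\bP_n\neq0$. By strict convexity this upgrades to the pointwise identity $\sum_{j=1}^n x_j = X$ on the support of $\bP_n$, which is exactly the fixed-center-of-mass condition. Conversely, any $\bP\in\PiGC(\rho)$ with fixed center of mass saturates the tangent-plane bound on each support and therefore realizes $\bP(\bc)=h(X)$.

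Assembling the two directions finishes the proof. If some $\bP\in\PiGC(\rho)$ has fixed center of mass, then $h(X)=\bP(\bc)\geq\cC(\rho)\geq h(X)$ by the universal lower bound, so $\cC(\rho)=h(X)$ and is attained. Conversely, if $\cC(\rho)=h(X)$ and admits a minimizer $\bP^*$, the equality analysis shows that $\bP^*$ has fixed center of mass, hence such a state exists. The same equality analysis identifies the full set of minimizers with exactly the elements of $\PiGC(\rho)$ having fixed center of mass. I do not anticipate a serious obstacle: the only point requiring a little care is lifting the pointwise equality in the strict-convexity condition to a $\bP_n$-a.e.~statement, which is routine once one notes that the tangent-plane difference $h(y)-c_0-y\cdot\nabla h(X)$ is a non-negative lower semicontinuous function vanishing only at $y=X$.
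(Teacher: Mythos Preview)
Your proof is correct and essentially identical to the paper's. The paper introduces the shifted function $\widetilde h(y):=h(y)-h(X)-(y-X)\cdot\nabla h(X)$ and the associated cost $\widetilde{\bc}$, then shows $\bP(\widetilde{\bc})=\bP(\bc)-h(X)\geq0$ with equality exactly for fixed-center-of-mass states; this is precisely your tangent-plane inequality integrated against $\bP$ and summed in $n$, just packaged slightly differently.
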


Note the need to appropriately choose $c_0$ in~\eqref{eq:h_center_mass}, a subtlety which does not occur in the canonical case considered in~\cite{MarGerNen-17}.

\begin{proof}
Let us introduce $\widetilde{h}(x):=h(x)-h(X)-(x-X)\cdot\nabla h(X)$, which is non-negative and vanishes only at $X$.
Let
$\widetilde{c}_0=0$ and $\widetilde{c}_n(x_1,...,x_n)=\widetilde{h}(\sum_{j=1}^nx_j)$ for $n\geq1$.
Then we have $\bP(\widetilde{\bc})=\sum_{n\geq1}\int\widetilde{h}(\sum_{j=1}^nx_j)\,{\rm d}\bP_n$ for any  $\bP=(\bP_n)_{n\geq0}\in\PiGC(\rho)$. This is positive and vanishes exactly when $\sum_{j=1}^nx_j=X$, $\bP_n$--almost surely. This is our definition of having a fixed center of mass. Remark then that
\begin{align*}
\bP(\widetilde{\bc})&=\sum_{n\geq1}\int_{\R^{dn}}h\!\left(\sum_{j=1}^nx_j\right)\,{\rm d}\bP_n(x_1,...,x_n) -\big(h(X)-X\cdot\nabla h(X)\big)(1-\bP_0)\\
&\qquad -\nabla h(X)\cdot\underbrace{\sum_{n\geq1}\int_{\R^{dn}}\sum_{j=1}^nx_j\,{\rm d}\bP_n(x_1,...,x_n)}_{=X}\\
&=\big(h(X)-X\cdot\nabla h(X)\big)\bP_0+\sum_{n\geq1}\int_{\R^{dn}}h\!\left(\sum_{j=1}^nx_j\right)\,{\rm d}\bP_n(x_1,...,x_n) -h(X)\\
&=\bP(\bc)-h(X),
\end{align*}
which concludes the proof.
\end{proof}

Next we show on an example that the support in $n$ of an optimal multi-plan for $\cC(\rho)$ can be very large in some situations.

\begin{example}[Finitely many points with the harmonic cost]\label{ex:square}
We consider the four corners $x_1=(1,-1)$, $x_2=(1,1)$, $x_3=(-1,1)$ and $x_4=(-1,-1)$ of a square centered at the origin and place the system at half-filling: $\rho=\frac12\sum_{j=1}^4\delta_{x_j}$.
Then $\int_{\R^2}x\,{\rm d}\rho(x)=0$ and $\cC(\rho)=0$ for
$$c_0=0,\qquad c_n(x_1,...,x_n)=\left|\sum_{j=1}^nx_j\right|^2.$$
There is a unique canonical minimizer, which consists of pairing the particles along the diagonal:
\begin{equation}
 \bP_2=\frac{\delta_{x_1}\otimes_s \delta_{x_3}+\delta_{x_2}\otimes_s \delta_{x_4}}2.
 \label{eq:canonical_square}
\end{equation}
On the other hand, there are many grand-canonical probabilities giving the same cost $\bP(\bc)=0$ with the same density. For instance we can fill the square uniformly with $4N$ particles with a probability $1/(2N)$
\begin{equation}
 \bP_0=1-\frac{1}{2N},\qquad\bP_{4N}=\frac1{2N}(\delta_{x_1})^{\otimes N}\otimes_s(\delta_{x_2})^{\otimes N}\otimes_s (\delta_{x_3})^{\otimes N}\otimes_s (\delta_{x_4})^{\otimes N}.
 \label{eq:grand_canonical_square}
\end{equation}
Since any convex combination is also optimal, we see that optimizers of $\cC(\rho)$ can have an arbitrarily large support.

It is possible to allow $c_0$ to be different from 0, while keeping $c_n=|\sum_{j=1}^nx_j|^2$ for $n\geq1$. For $c_0>0$ we find $\cC(\rho)=0$ with unique minimizer the canonical probability in~\eqref{eq:canonical_square}, since this is the unique minimizer which has $\bP_0=0$. For $c_0<0$ we can use the sequence~\eqref{eq:grand_canonical_square} and obtain $\cC(\rho)=c_0$ with no minimizer.
\end{example}

\section{Duality}\label{sec:duality}

In this section we study the dual problem. From~\eqref{eq:one-particle_cost} we know that the variables dual to the density $\rho$ are one-agent costs of the form $\bPhi=(\Phi_n)_{n\geq0}$ with $\Phi_0=0$ and $\Phi_n=\sum_{j=1}^n\phi(x_j)$ for a given $\phi\in C^0_b(\R^d)$.
We should however not forget the other constraint that $\bP$ forms a probability, $\bP_0+\sum_{n\geq1}\bP_n(\Omega^n)=1$, which requires the introduction of an additional Lagrange multiplier $\beta$. This constraint is independent of the density constraint, on the contrary to the usual multi-marginal problem. This leads us to the following \emph{dual problem}
\begin{multline}
\cD(\rho):=\sup\bigg\{\beta+\int_\Omega \phi(x)\,{\rm d}\rho(x)\ :\ \beta\leq c_0,\quad \phi\in C^0_b(\R^d),\\ \beta+\sum_{j=1}^n\phi(x_j)\leq c_n(x_1,...,x_n),\quad \forall n\geq1\bigg\}.
\label{eq:dual}
\end{multline}
If we take any $\beta$ and $\phi\in C_b^0(\R^d)$ satisfying the above constraints and any $\bP\in\PiGC(\rho)$, then we have
$$\bP(\bc)=c_0\bP_0+\sum_{n\geq1}\int_{\Omega^n}c_n\,{\rm d}\bP_n \geq \beta+\int_{\Omega}\phi(x)\,{\rm d}\rho(x)$$
which proves that $\cC(\rho)\geq \cD(\rho)$ in all situations. We would like to have equality.
It is possible to rewrite the dual problem in a slightly different manner. Let us introduce the grand-canonical ground state energy in the potential $\phi$
$$E(\phi):=\inf_{\substack{n\geq0\\ x_1,...,x_n\in\Omega^n}}\left(c_n(x_1,...,x_n)-\sum_{j=1}^n\phi(x_j)\right)=\inf_{\bP}\bP(\bc-\Phi).$$
The last infimum taken over all grand-canonical probabilities $\bP$ so that $\bP(\bc)<\ii$ and $\rho_\bP(\Omega)<\ii$ (without any other constraint on $\rho_\bP$). Like in Density Functional Theory~\cite{Lieb-83b,LewLieSei-23_DFT}, we rewrite the infimum over $\bP$ as an infimum over $\rho$ and then an infimum over all $\bP$ having this density $\rho$:
$$E(\phi):=\inf_{\rho}\left\{\cC(\rho)-\int_\Omega\phi(x)\,{\rm d}\rho(x)\right\}$$
with the infimum taken over all finite non-negative measures. We see that $-E$ is nothing but the Legendre-Fenchel transform of $\cC$. On the other hand, in~\eqref{eq:dual} the largest possible $\beta$ at fixed $\phi$ is indeed equal to $E(\phi)$, and therefore we can rewrite~\eqref{eq:dual} in the form
\begin{equation}
\cD(\rho)=\sup_{\phi\in C^0_b(\R^d)}\left\{\int_{\Omega}\phi(x)\,{\rm d}\rho(x)+E(\phi)\right\}.
\label{eq:dual_Legendre}
\end{equation}
Thus, $\cD$ is the Legendre-Fenchel transform of $-E$. From Theorem~\ref{thm:existence} and the Fenchel duality theorem for convex lower semi-continuous functions~\cite{Simon-11}, we conclude the following, which is an extension of a well known result in the multi-marginal case~\cite{Kellerer-84}.

\begin{theorem}[Duality]\label{thm:duality}
Let $\Omega\subset\R^d$ be any Borel set. Let $\bc=(c_n)_{n \geq 0} $ be a superstable family of lower semi-continuous costs. Then we have $\cC(\rho)=\cD(\rho)$.
\end{theorem}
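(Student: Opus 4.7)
The plan is to recognise the theorem as a direct application of the Fenchel--Moreau biduality theorem, leveraging the reformulation already carried out in the excerpt. We pair $\rho\in M_+(\R^d)$ (finite non-negative Borel measures) with $\phi\in C_b^0(\R^d)$ via $\langle\phi,\rho\rangle=\int\phi\,\rd\rho$. The excerpt already identifies
$$-E(\phi)=\cC^*(\phi):=\sup_\rho\bigl\{\langle\phi,\rho\rangle-\cC(\rho)\bigr\},\qquad \cD(\rho)=\cC^{**}(\rho),$$
so the theorem reduces to showing $\cC=\cC^{**}$.

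The first step is to extend $\cC$ to the vector space of finite signed Borel measures by setting $\cC(\rho)=+\infty$ if $\rho\notin M_+(\R^d)$. This preserves convexity (since $M_+$ is convex) and properness: the vacuum state $(1,0,0,\ldots)\in\PiGC(0)$ yields $\cC(0)\leq c_0<\infty$, while stability gives $\cC(\rho)\geq -A-B\rho(\Omega)>-\infty$ on $M_+$. Non-negativity is preserved in the tight limit (for $f\geq 0$ in $C_b^0$ we have $\int f\,\rd\rho=\lim\int f\,\rd\rho_n\geq 0$), so the extension does not spoil any of the properties inherited from $M_+$.

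The second step is to verify that the extended $\cC$ is lower semi-continuous for the weak-$*$ topology $\sigma(M,C_b^0)$, which on $M_+$ coincides with tight convergence. Theorem~\ref{thm:existence} supplies the sequential version. To upgrade to topological lower semi-continuity, one uses that the mass functional $\rho\mapsto\rho(\R^d)=\langle 1,\rho\rangle$ is tightly continuous and that on each bounded-mass ball the tight topology is metrizable (Prokhorov), so each sublevel set $\{\cC\leq\lambda\}\cap\{\rho(\R^d)\leq K\}$ is tightly closed, and one lets $K\to\infty$. Equivalently, a Hahn--Banach/Mazur argument says that for convex functions sequential lower semi-continuity in any topology compatible with the dual pair is equivalent to topological lower semi-continuity in all such topologies.

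Finally, Fenchel--Moreau~\cite{Simon-11} delivers $\cC=\cC^{**}=\cD$. The main delicate point is the metrizability/sequential-closure step, since the weak-$*$ topology on the full cone of measures is not metrizable globally; the continuous mass functional is precisely what permits reduction to metrizable balls. An alternative, perhaps more concrete, route would be to run a direct Hahn--Banach separation argument: given $\rho_0$ and $\lambda<\cC(\rho_0)$, use that the convex set $\{(\rho,t)\in M_+\times\R:t\geq\cC(\rho)\}$ is tightly closed by Theorem~\ref{thm:existence} (and has no point with $\rho=\rho_0$, $t\leq\lambda$) to produce a tightly continuous affine functional $\rho\mapsto\langle\phi,\rho\rangle+\beta$ separating them, which directly exhibits the admissible pair $(\phi,\beta)$ realising $\cD(\rho_0)>\lambda$.
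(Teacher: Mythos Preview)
Your proposal is correct and follows essentially the same route as the paper: the paper's proof is the single sentence ``From Theorem~\ref{thm:existence} and the Fenchel duality theorem for convex lower semi-continuous functions~\cite{Simon-11}, we conclude the following,'' and you have simply fleshed out the details of this application (extension to signed measures, the sequential-to-topological lsc passage via mass-bounded metrizable balls). Your extra care about the metrizability step is warranted and not spelled out in the paper, but the underlying argument is identical.
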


Next we turn to the more complicated question of the existence of an optimal pair $(\beta,\phi)$ for the dual problem. As a first step we relax a bit the notion of dual potentials and assume that $\phi$ is in $L^\ii(\rd\rho)$ instead of $C^0_b$:
\begin{multline}
\label{eq:dualinf}
\widetilde\cD(\rho):=\sup\bigg\{\beta+\int_\Omega \phi(x)\,{\rm d}\rho(x)\ :\ \beta\leq c_0,\quad \phi\in L^{\infty}(\rd\rho),\\ \beta+\sum_{j=1}^n\phi(x_j)\leq c_n(x_1,...,x_n) \quad \rho^{\otimes n}\text{--a.e.}\quad \forall n\geq1\bigg\}.
\end{multline}
The following is a rather simple result which provides the existence of the dual pair $(\beta,\phi)$ under strong assumptions on the cost.

\begin{theorem}[Existence of a dual potential]\label{thm:existence_dual}
Let $\Omega$ be any open set in $\R^d$ and $\rho$ be any finite measure on $\Omega$. Let $\bc=(c_n)_{n \geq 0} $ be a superstable family of lower semi-continuous costs such that

\noindent $(i)$ $c_1\in L^\ii(\Omega,\rd\rho)$;

\noindent $(ii)$ for $n\geq2$, $\{c_n<\ii\}$ is an open subset of $\Omega$ on which $c_n$ is continuous;

\noindent $(iii)$ $c_{n+1}(x_1,...,x_{n+1})\geq c_n(x_1,...,x_n)-A$ for some $A\in\R$, all $n\geq0$ and all $x_1,...,x_{n+1}\in\Omega$.

\noindent Then we have
$\cD(\rho)=\widetilde\cD(\rho)=\cC(\rho).$
If $\cC((1+\eps)\rho) < +\infty$ for some $\eps>0$. Then there exists $(\beta^*,\varphi^*)\in \R\times L^\ii(\Omega,\rd\rho)$ which is optimal for $\widetilde\cD(\rho)$ in~\eqref{eq:dualinf}. For any optimizer $\bP^*=(\bP_n^*)_{n\geq0}$ for $\cC(\rho)$, we have
\begin{equation}
 \beta^*+\sum_{j=1}^n\phi^*(x_j)= c_n(x_1,...,x_n) \quad \bP^*_n\text{--a.e., for all $n\geq0$.}
 \label{eq:optimality_dual}
\end{equation}
\end{theorem}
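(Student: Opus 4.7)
The equalities $\cD(\rho)=\widetilde\cD(\rho)=\cC(\rho)$ are easy: $\cD(\rho)\leq\widetilde\cD(\rho)\leq\cC(\rho)$ is immediate from the definitions, and $\cC(\rho)\leq\cD(\rho)$ is exactly Theorem~\ref{thm:duality}, valid under superstability.

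For the existence of an optimizer, I take a maximizing sequence $(\beta_k,\phi_k)_k$ for $\widetilde\cD(\rho)$, normalized so that $\beta_k=E(\phi_k)$. The key observation is that $\rho$ and $(1+\eps)\rho$ have identical null sets, so $(\beta_k,\phi_k)$ is simultaneously admissible for $\widetilde\cD((1+\eps)\rho)=\cC((1+\eps)\rho)<\ii$. Combining
$$\beta_k+(1+\eps)\!\int\!\phi_k\,\rd\rho\leq\cC((1+\eps)\rho)\quad\text{with}\quad\beta_k+\!\int\!\phi_k\,\rd\rho\longrightarrow\cC(\rho)$$
yields an upper bound on $\int\phi_k\,\rd\rho$ and, together with $\beta_k\leq c_0$, two-sided bounds on $\beta_k$. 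The $n=1$ constraint and condition~(i) then give $\phi_k\leq\|c_1\|_{L^\ii(\rho)}-\beta_k$ $\rho$-a.e., so $\phi_k^+$ is uniformly bounded in $L^\ii(\rho)$.

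The heart of the proof is the $L^\ii$ lower bound on $\phi_k$, for which I use a $\bc$-transform exploiting condition~(iii). Define
$$\bar\phi_k(x):=\inf_{n\geq1,\;x_2,\ldots,x_n\in\Omega}\Bigl(c_n(x,x_2,\ldots,x_n)-\sum_{j=2}^n\phi_k(x_j)\Bigr)-\beta_k.$$
From the constraint one has $\bar\phi_k\geq\phi_k$, while iterating~(iii) (namely $c_n(x,x_2,\ldots,x_n)\geq c_{n-1}(x_2,\ldots,x_n)-A$) yields $\bar\phi_k(x)\geq -A+E(\phi_k)-\beta_k=-A$. The delicate technical step is to modify the maximizing sequence into an equivalent one $(\widetilde\beta_k,\widetilde\phi_k)$ in which $\widetilde\phi_k$ is uniformly bounded in $L^\ii(\rho)$ while remaining admissible; the naive truncation $\max(\phi_k,-A)$ breaks the $n\geq 2$ constraints, so one must use a finer modification whose admissibility defect is absorbed through the linear lower bound $c_n\geq c_0-nA$ coming from iterating~(iii). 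Once a bounded sequence is in hand, extract a weak-$\ast$ limit $\widetilde\phi_k\weakto\phi^*\in L^\ii(\rho)$ and a real limit $\widetilde\beta_k\to\beta^*$. Passing the constraint to the $\rho^{\otimes n}$-a.e.\ limit is routine: test against non-negative $L^1(\rho^{\otimes n})$ functions supported on $\{c_n<\ii\}$ and use linearity. The equality $\beta^*+\int\phi^*\,\rd\rho=\cC(\rho)$ then gives optimality of $(\beta^*,\phi^*)$.

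For complementary slackness~\eqref{eq:optimality_dual}, pick any primal minimizer $\bP^*$ (Theorem~\ref{thm:existence}). Then $\bP^*(\bc)=\cC(\rho)=\beta^*+\int\phi^*\,\rd\rho=\bP^*(\beta^*+\bPhi^*)$, hence $\sum_n\bP_n^*(c_n-\beta^*-\sum_j\phi^*(x_j))=0$. Using condition~(ii) to choose a suitable Borel representative of $\phi^*$ together with the absolute continuity of the first marginals of $\bP_n^*$ with respect to $\rho$, each integrand is non-negative $\bP_n^*$-a.e., so all terms of the sum must vanish, which is~\eqref{eq:optimality_dual}. The principal obstacle throughout is the $L^\ii$ lower bound on $\phi_k$: unlike in classical two-marginal optimal transport, the symmetric grand-canonical setting lacks an automatically admissible c-transform, and the workaround requires pairing $\bar\phi_k$ with a carefully designed truncation whose control relies crucially on condition~(iii).
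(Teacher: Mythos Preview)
Your proof has a genuine gap at the step you yourself flag as ``the heart of the proof'': the $L^\infty$ lower bound on $\phi_k$. You assert that the naive truncation $\max(\phi_k,-A)$ breaks the $n\geq2$ constraints and therefore appeal to an unspecified ``finer modification''. In fact the truncation \emph{does} preserve admissibility, and this is exactly what the paper uses. After the harmless normalization $c_n\mapsto c_n+An$, $\phi\mapsto\phi+A$ (so $A=0$), one checks directly that $\phi_+=\max(\phi,0)$ is admissible: on the Lebesgue-point set $B^n$, if $\phi(x_1),\ldots,\phi(x_k)\geq0$ and $\phi(x_{k+1}),\ldots,\phi(x_n)<0$, then
\[
\beta+\sum_{j=1}^n\phi_+(x_j)=\beta+\sum_{j=1}^k\phi(x_j)\leq c_k(x_1,\ldots,x_k)\leq c_n(x_1,\ldots,x_n),
\]
the last inequality being precisely condition~(iii). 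Since $\int\phi_+\,\rd\rho\geq\int\phi\,\rd\rho$, one may restrict the supremum in $\widetilde\cD(\rho)$ to $\phi\geq0$, which combined with the $n=1$ constraint gives the two-sided $L^\infty$ bound immediately. Your $\bc$-transform $\bar\phi_k$ is not shown to be admissible (and in the symmetric grand-canonical setting there is no reason it should be), so that route is incomplete as written.

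There is a second, smaller gap: you claim $\widetilde\cD(\rho)\leq\cC(\rho)$ is ``immediate from the definitions''. It is not. The constraint in $\widetilde\cD$ holds $\rho^{\otimes n}$--a.e., but an optimal $\bP_n^*$ is typically \emph{singular} with respect to $\rho^{\otimes n}$ (think of Monge plans). The paper handles this by showing, via the continuity assumption~(ii) and a Lebesgue-point argument, that the constraint in fact holds everywhere on a product set $B^n$ with $\rho(\Omega\setminus B)=0$; since each marginal of $\bP_n^*$ is dominated by $\rho$, this set has full $\bP_n^*$--measure. The same device is needed again to pass the constraint to the weak-$\ast$ limit and to justify the nonnegativity of the integrand in the complementary-slackness step.
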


The monotonicity condition $(iii)$ on $c_n$ appeared before in~\eqref{eq:condition_wlsc} (without the constant $A$). The assumption $\cC((1+\eps)\rho) < +\infty$ was used in Theorem~\ref{thm:truncation} and is inspired from~\cite{ChaCha-84}, which also dealt with the existence of optimal potentials. The same assumption was recently used in~\cite[Thm.~4.2]{BouButChaPas-21}.

We do not expect that the continuity assumption $(ii)$ for $c_n$ is at all necessary (up to changing the dual, see Remark~\ref{rmk:dualitesci} below), but it is a reasonable assumption for applications. It is used in our proof to simplify some measure-theoretic technicalities. Later we will even ask that $c_n$ is differentiable.

\begin{remark}\label{rmk:dualitesci}
To obtain a similar result with weaker regularity assumptions on the cost, we would need to further relax the dual problem, for instance by enforcing the inequality a.e.~with respect to every grand-canonical plan:
\begin{multline}
\label{eq:dualinf2}
\overline{\cD}(\rho):=\sup\bigg\{\beta+\int_\Omega \phi(x)\,{\rm d}\rho(x)\ :\ \beta\leq c_0,\quad \phi\in L^{\infty}(\rd\rho),\\ \beta+\sum_{j=1}^n\phi(x_j)\leq c_n(x_1,...,x_n) \quad \bP_n\text{--a.e.}\quad \forall n\geq1, \; \forall \bP \in \PiGC(\rho)\bigg\}.
\end{multline}
In fact let us consider the costs $c_n(x_1,\ldots, x_n)= 1-\1_A$ where $A=\{x_1=x_2=\ldots=x_n\}$, which does not satisfy $(ii)$. Then we have $\cC(\rho)=\cD(\rho)=0$ while if $\rho$ has no atoms $\widetilde\cD(\rho)=1$ since $c_n =1 $ $\rho^{\otimes n}$--a.e.
 \end{remark}

\begin{proof}[Proof of Theorem~\ref{thm:existence_dual}]
Upon changing $c_n$ into $c_n+An$ and $\phi$ into $\phi+A$, we can always assume that $A=0$.
First we make some comments on the constraint
\begin{equation}
 \beta+\sum_{j=1}^n\phi(x_j)\leq c_n(x_1,...,x_n)
 \label{eq:constraint_interpretation}
\end{equation}
which we have imposed in the definition of $\widetilde\cD(\rho)$, for $\phi\in L^\ii(\rd\rho)$. Let $B_\phi$ be the set of the $\rho$--Lebesgue points of $\phi$ and $B_{c_1}$ that of $c_1$. Let finally $B:=B_\phi\cap B_{c_1}$. Then we have $\rho(\Omega\setminus B)=0$. Let finally $\bar x_1,...,\bar x_n\in B$. If $c_n(\bar x_1,...,\bar x_n)=+\ii$, then the inequality~\eqref{eq:constraint_interpretation} of course holds at those points. If $c_n(\bar x_1,...,\bar x_n)<+\ii$, then $c_n$ is bounded in a neighborhood and we can integrate~\eqref{eq:constraint_interpretation} over $\otimes_{j=1}^n\1_{B_r(\bar x_j)}/\rho(B_r(\bar x_j))\,\rd\rho^{\otimes n}$. After passing to the limit $r\to0$ using the continuity of $c_n$ on the open set $\{c_n<\ii\}$, we obtain that~\eqref{eq:constraint_interpretation} holds at $\bar x_1,...,\bar x_n$. This proves that, for all $n\geq1$,~\eqref{eq:constraint_interpretation} is valid on the particular product set $B^n$, which has full $\rho^{\otimes n}$--measure.

Consider now any $\bP=(\bP_n)_{n\geq0}\in\PiGC(\rho)$ such that $\bP(\bc)<\ii$. We have
$$\bP_n(\Omega^n)\leq \bP_n(B^n)+n\bP_n\big((\Omega\setminus B)\times\Omega^{n-1}\big)=\bP_n(B^n)+\rho_{\bP_n}(\Omega\setminus B)=\bP_n(B^n)$$
since $\rho_{\bP_n}\leq \rho$ and $\rho(\Omega\setminus B)=0$. Hence $B^n$ also has full $\bP_n$--measure and thus~\eqref{eq:constraint_interpretation} also holds $\bP_n$-almost everywhere. Integrating over $\bP_n$ and summing over $n$, we find
$\beta+\int_{\R^d}\phi\,\rd\rho\leq \bP(\bc)$.
After minimizing over $\bP$, this proves that $\widetilde\cD(\rho)\leq\cC(\rho)$ and since it is clear that $\widetilde\cD(\rho)\geq \cD(\rho)$, we conclude from Theorem~\ref{thm:duality} that there is equality.

Next we prove that we can restrict the supremum in $\widetilde \cD(\rho)$ to non-negative functions $\phi$. Let $\phi\in L^\ii(\Omega,\rd\rho)$ satisfying the constraint~\eqref{eq:constraint_interpretation} and denote $\phi_+=\max\{\phi,0\}$ its positive part. We work again on the set $B^n$ of Lebesgue points introduced before. The claim is that $\phi_+$ also satisfies the constraint~\eqref{eq:constraint_interpretation} on $B^n$. In fact, if for instance $\phi(x_1),...,\phi(x_k)\geq0$ whereas $\phi(x_{k+1}),...,\phi(x_n)<0$, we have
$$\sum_{j=1}^n\phi_+(x_j)=\sum_{j=1}^k\phi(x_j)\leq c_k(x_1,...,x_k)\leq c_n(x_1,...,x_n)$$
due to our monotonicity assumption $(iii)$ on $c_n$ (recall that $A=0$). Since $\int\phi_+\,\rd\rho\geq \int\phi\,\rd\rho$ this proves that we can restrict the supremum to non-negative potentials $\phi$'s in the dual problem $\widetilde\cD(\rho)$.

Next we assume that $\cC((1+\eps)\rho)<\ii$ and consider a maximizing sequence $(\beta_k,\phi_k)$, that is,
\begin{equation}
\lim_{k\to\ii}\left(\beta_k+\int\phi_k\,\rd\rho\right)=\widetilde\cD(\rho)=\cC(\rho)<\ii.
 \label{eq:maximizing_sequence}
\end{equation}
We will prove that $(\beta_k,\phi_k)$ is bounded in $\R\times L^\ii(\Omega,\rd\rho)$, following an argument from~\cite{ChaChaLie-84,ChaCha-84}. The constraint~\eqref{eq:constraint_interpretation} at $n=0$ gives $\beta_k\leq c_0$. To see that $\beta_k$ is bounded from below, we use $(\beta_k,\phi_k)$ as a competitor for the problem $\widetilde\cD((1+\eps)\rho)=\cC((1+\eps)\rho)$ and get
\[\beta_k +(1+\eps) \int\phi_k\,\rd\rho \leq \widetilde \cD\big((1+\eps)\rho\big).
\]
This gives
$$-\eps\beta_k\leq \widetilde \cD((1+\eps)\rho)-(1+\eps)\left(\beta_k +\int\phi_k\,\rd\rho\right)=\widetilde \cD((1+\eps)\rho)-(1+\eps)\widetilde \cD(\rho)+o(1)$$
and shows that $\beta_k$ is bounded. Finally, the constraint~\eqref{eq:constraint_interpretation} for $n=1$ provides $0\leq \phi_k\leq \|c_1\|_{L^\ii(\rd\rho)}-\beta_k$ $\rho$--a.e., and thus $\phi_k$ is bounded in $L^\ii(\Omega,\rd\rho)$.

After extraction of a subsequence, we can assume that $\beta_k\to\beta$ and $\phi_k\wto \phi$ weakly--$\ast$ in $L^\ii(\Omega,\rd\rho)$. Then
$$\tilde\cD(\rho)=\lim_{k\to\ii}\left(\beta_k +\int\phi_k\,\rd\rho\right)=\beta +\int\phi\,\rd\rho$$
and it only remains to show that $\phi$ satisfies the constraint~\eqref{eq:constraint_interpretation}. The argument goes as before, working on the set $B$ constructed from the Lebesgue points of $\phi$ and integrating again the constraint for $(\beta_k,\phi_k)$ over the measure $\otimes_{j=1}^n\1_{B_r(\bar x_j)}/\rho(B_r(\bar x_j))\,\rd\rho^{\otimes n}$. One takes first $k\to\ii$ and then $r\to0$.

Finally, if $\bP^*=(\bP^*_n)_{n\geq0}$ and $(\beta^*,\phi^*)$ are optimizers for $\cC(\rho)$ and $\widetilde\cD(\rho)$ respectively, then we have
$$\sum_{n\geq0}\int \Big(c_n-\beta^*-\sum_{j=1}^n\phi^*(x_j)\Big)\,\rd\bP^*_n=\bP(\bc)-\beta^*-\int\phi^*\,\rd\rho=\cC(\rho)-\widetilde\cD(\rho)=0.$$
The function in the parenthesis is non-negative $\bP_n^*$--a.e.~due to the constraint~\eqref{eq:constraint_interpretation} being valid on a set of full $\bP_n^*$--measure. This shows~\eqref{eq:optimality_dual}.
\end{proof}

In the following result we improve the regularity of the Kantorovich potential $\phi$, in the case that $\bc$ is a pairwise cost $c_2$ which satisfies the following regularity and growth assumption
\begin{equation}\label{eqn:assgro}
|\nabla_x c_2(x,y) | \leq F\big( c_2(x,y)\big),\ \text{$F$ convex-increasing, such that $F(0)=0$}.
\end{equation}
This is the case for example if $c_2(x,y)=\frac 1{|x-y|^s}$ choosing $F(t)=s\, t^{\frac{s+1}s}$.

\begin{theorem}[Existence of a Lipschitz dual potential]\label{thm:existence_dual_Lip}
Assume that $\bc=(c_n)_{n\geq0}$ is a pairwise cost as in~\eqref{eq:pairwise2}, where $c_2$ is positive, diverges on the diagonal, $\lim_{x\to y}c_2(x,y)=+\ii=c_2(x,x)$, is finite and differentiable on $\{x \neq y\}$, and satisfies~\eqref{eqn:assgro}. We also assume that $c_2$ is bounded outside of the origin, namely
$\sup_{|x-y|\geq r}c_2(x,y)<\ii$ for all $r>0$. Let $\rho$ be a nonnegative measure such that there exists $r>0$ with $\kappa:=\sup_{x\in\Omega}\rho\big(B_r(x)\big)<1$. Then for every optimal dual pair $(\beta^*,\varphi^*)$ for $\widetilde\cD(\rho)$, we have that $\varphi^*$ coincides with an $L$-Lipschitz function $\rho$--almost everywhere, where $L$ depends only on $\rho$. In particular if ${\rm supp} (\rho)=\R^d$, $\phi \in C_b^0(\R^d)$ and it is also an optimal potential for $\cD(\rho)$.
\end{theorem}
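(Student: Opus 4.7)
The plan is to express $\phi^*$ as a grand-canonical $c$-transform on $\supp(\bP^*)$, convert the growth condition~\eqref{eqn:assgro} into a scalar Grönwall-type argument along segments, and finally upgrade the resulting $\rho^{\otimes 2}$-a.e.~local Lipschitz estimate into a genuine Lipschitz function on $\R^d$.

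I would begin by fixing a minimizer $\bP^*$ of $\cC(\rho)$, which exists by Theorem~\ref{thm:existence} since the pairwise repulsive cost is superstable (as noted at the beginning of Section~\ref{sec:support}). Theorem~\ref{thm:existence_dual} delivers the optimality condition $\beta^*+\sum_{j=1}^{n}\phi^*(x_j)=c_n(x_1,\ldots,x_n)$ on $\supp(\bP^*_n)$. Using the pairwise structure $c_n(x,y_1,\ldots,y_{n-1})=\sum_{j=1}^{n-1}c_2(x,y_j)+c_{n-1}(y_1,\ldots,y_{n-1})$, for $\rho$-a.e.~$x$ I can pick $n\geq 2$ and $(x,y_1,\ldots,y_{n-1})\in\supp(\bP^*_n)$ such that
\[
\phi^*(x)=\sum_{j=1}^{n-1}c_2(x,y_j)+\Bigl(c_{n-1}(y_1,\ldots,y_{n-1})-\beta^*-\sum_{j=1}^{n-1}\phi^*(y_j)\Bigr)
\]
(the degenerate cases $n=0,1$ force $\phi^*$ to be constant on the corresponding piece of $\supp\rho$ and cause no difficulty). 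For $\rho$-a.e.~$x'$, the admissibility inequality survives at $(x',y_1,\ldots,y_{n-1})$ by the Lebesgue-points argument employed in the proof of Theorem~\ref{thm:existence_dual}, and subtracting yields the key bound
\[
\phi^*(x')-\phi^*(x)\leq \sum_{j=1}^{n-1}\bigl(c_2(x',y_j)-c_2(x,y_j)\bigr).
\]

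Next, set $\gamma_t:=(1-t)x+tx'$ and $U(t):=\sum_{j=1}^{n-1}c_2(\gamma_t,y_j)$. Lemma~\ref{lem:law_large_numbers_interaction} applied at the index corresponding to $x$ gives the \emph{uniform} bound $U(0)\leq K:=\rho(\Omega)M(r)/(1-\kappa)$, independent of $n$. The growth condition~\eqref{eqn:assgro} provides $\bigl|\tfrac{d}{dt}c_2(\gamma_t,y_j)\bigr|\leq |x'-x|\,F(c_2(\gamma_t,y_j))$; summing over $j$ and using the superadditivity $\sum_j F(a_j)\leq F(\sum_j a_j)$ (which follows from $F$ being convex with $F(0)=0$) collapses the $n-1$ coupled inequalities into the scalar differential inequality
\[
|U'(t)|\leq |x'-x|\,F(U(t)),\qquad U(0)\leq K.
\]
A standard ODE comparison with $V'=|x'-x|F(V)$, $V(0)=K$, shows that for $|x'-x|\leq\delta:=\int_K^{2K}du/F(u)\geq K/F(2K)$, the quantity $U$ remains bounded by $2K$ on $[0,1]$, whence $|U(1)-U(0)|\leq |x'-x|F(2K)$. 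Interchanging $x$ and $x'$ yields the two-sided bound $|\phi^*(x)-\phi^*(x')|\leq L|x-x'|$ with $L:=F(2K)$ depending only on $\rho$, holding $\rho^{\otimes 2}$-a.e.~on $\{|x-x'|\leq\delta\}$.

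Finally I would upgrade this $\rho$-a.e.~local bound to a bona fide Lipschitz function: pass to the $\rho$-Lebesgue-point representative of $\phi^*$, check that the local Lipschitz estimate survives in the limit, and chain intermediate Lebesgue points at scales $\leq\delta$ to obtain a globally $L$-Lipschitz function on a $\rho$-full set. When $\supp\rho=\R^d$ this extends uniquely to an $L$-Lipschitz function on $\R^d$, which is continuous and bounded (since $\phi^*\in L^\infty(\rho)$), and the admissibility inequality $\beta^*+\sum_j\phi(x_j)\leq c_n$ extends to every point by continuity of $c_n$ on $\{c_n<\infty\}$. The Lipschitz representative is then an admissible competitor in $\cD(\rho)$, realizing the common optimum $\cD(\rho)=\widetilde\cD(\rho)$. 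The most delicate step is the scalar Grönwall argument: it relies crucially on the $n$-independence of the bound $K$ furnished by Lemma~\ref{lem:law_large_numbers_interaction} together with the superadditivity of $F$, and it is precisely the combination of these two features that yields a Lipschitz constant $L=F(2K)$ depending on $\rho$ alone rather than on the (possibly unbounded) support of $\bP^*$ in $n$.
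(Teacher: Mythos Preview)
Your proposal is correct and follows essentially the same route as the paper: both arguments combine the optimality condition~\eqref{eq:optimality_dual} with the admissibility inequality at a displaced point to sandwich $\phi^*(x)-\phi^*(x')$ by an increment of the function $A(z)=\sum_j c_2(z,y_j)$, then use Lemma~\ref{lem:law_large_numbers_interaction} for an $n$-independent bound on $A$ at the base point and propagate it via the growth condition~\eqref{eqn:assgro}. Your scalar Gr\"onwall formulation of the propagation step is a more explicit version of what the paper simply states as ``$A^{\bx}(x)\leq M+1$ in $B_r(x_1)$, where $r$ is uniform''.

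Two small remarks on differences. First, the paper obtains the two-sided bound in one stroke by taking \emph{two} support points $(x_1,\ldots,x_m)\in\supp\bP_m^*$ and $(y_1,\ldots,y_n)\in\supp\bP_n^*$ and comparing $x_1$ with $y_1$; your scheme (fix one support point, then interchange the roles of $x$ and $x'$) is equivalent but requires choosing a second support point for $x'$, which you do implicitly. Second, for the local-to-global step the paper avoids chaining altogether by enlarging the constant to $L=\max\{F(M+1),\,2\|\phi^*\|_{L^\infty(\rd\rho)}/r\}$; this has the advantage of yielding an $L$-Lipschitz representative on all of $\R^d$ even when $\supp\rho$ is not $\delta$-chain-connected. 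Your chaining argument is fine when $\supp\rho=\R^d$ (which is the case singled out in the last sentence of the statement), but for the first claim in the general case you should invoke the $L^\infty$ bound on $\phi^*$ to bridge gaps in $\supp\rho$, as the paper does.
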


\begin{proof}
We are in the setting of Theorem~\ref{thm:existence_dual} and can consider an optimal pair $(\beta^*,\phi^*)$ for $\widetilde\cD(\rho)$. We work on the same set $B$ of Lebesgue points as in the proof of the latter result. Then we have
\[ \begin{cases} \beta^*+\phi^*(x_1) + \phi^*(y_2) + \ldots + \phi^*(y_n) \leq c_n(x_1, y_2, \ldots, y_n) \\
\beta^*+\phi^*(y_1) + \phi^*(x_2) + \ldots + \phi^*(x_m) \leq c_m(y_1, x_2 , \ldots, x_m) \end{cases}\]
on $B^{n}\times B^{m}$. Let $\bP^* \in \PiGC(\rho)$ be an optimal transport plan for $\cC(\rho)$. Thanks to the optimality condition~\eqref{eq:optimality_dual}, we have
\[ \begin{cases}\beta^*+ \phi^*(y_1) + \phi^*(y_2) + \ldots + \phi^*(y_n) = c_n(y_1, y_2, \ldots, y_n) \\
\beta^*+\phi^*(x_1) + \phi^*(x_2) + \ldots + \phi^*(x_m) = c_m(x_1, x_2 , \ldots, x_m) \end{cases} \ \bP_m^* \otimes \bP_n^* \text{--a.e. }\]
The two systems of equations give
\begin{equation}\label{eq:ineqduality}
A^{\bx}(x_1)-A^{\bx}(y_1)\leq \phi^*(x_1) -\phi^*(y_1)\leq A^{\by}(x_1)-A^{\by}(y_1) \quad \bP_m^* \otimes \bP_n^* \text{--a.e.,}
\end{equation}
where $A^{\bx}(x):= \sum_{i=2}^n c_2(x,x_i)$. By assumption we know that $M(r):=\sup_{|x-y|\geq r}c_2(x,y)$ is finite, hence by Lemma~\ref{lem:law_large_numbers_interaction} we conclude that $A^{\bx}(x_1) \leq K$ for some $K$. On the other hand, the $x_i$'s cannot be too close by Theorem~\ref{thm:support_asymp_doubling}. Hence $A^{\bx}$ is differentiable in a neighborhood of $x_1$. The assumption~\eqref{eqn:assgro} on $c_2$ implies
\[
|\nabla_x A^{\bx}(x)| \leq  \sum_{i=2}^n F(c_2(x,x_i)) \leq F \left( \sum_{i=2}^n c_2(x,x_i)\right)= F(A^{\bx}(x)).
\]
In particular $A^{\bx}(x)\leq K+1$ in $B_r(x_1)$, where $r$ is uniform and depends only on $K$ and $F$. In the same ball we have also $|\nabla A^{\bx} (x)| \leq F(K+1)$.
Using this information in \eqref{eq:ineqduality} we obtain
\[|\phi^*(x_1)-\phi^*(y_1)|\leq F(K+1) |x_1-y_1|  \]
for $\bP_m^* \otimes \bP_n^*$--a.e.~$(\bx,\by)$ with $|x_1-y_1|<r$.
We deduce that $\phi$ coincides $\rho$--almost everywhere with an $L$-Lipschitz function, where $L= \max \{ F(K+1) , 2\|\phi\|_{L^\infty(\rd\rho)}/r\}$.
\end{proof}

\section{The 1D problem}\label{sec:1D}

In this section we extend the results of~\cite{ColPasMar-15} to the grand canonical framework, confirming in particular the shape of the optimal plans considered in~\cite{MirSeiGor-13}. In short, we prove that for $n < \rho(\R) < n+1$ we have that ${\rm supp}(\bP) = \{ n , n+1\}$, whereas for $\rho(\R)\in\N$ the grand-canonical optimal solution is actually the canonical one. In addition, we show that the points are ``strictly correlated'' on the support of the optimal plan, that is, is given by a Monge state. When the mass of $\rho$ is not an integer, only one point is removed in a some region of the support.

The running hypothesis in this section is that $\bc=(c_n)_{n\geq0}$ is a pairwise cost as in~\eqref{eq:pairwise2} with $c_2(x,y)$ satisfying
\begin{equation}
\label{eqn:Hp1D}
c_2(x,y)= w(|x-y|),\; w: \R \to [0, +\infty] \text{ convex and decreasing.}
\end{equation}
First, we have to introduce the Monge optimal plan. For any atomless $\rho \in \mathcal{M}(\R)$ let us consider the unique $n \in \N$ and $\eta \in [0,1)$ such that $\rho(\R)=n+\eta$. We then split $\R$ into consecutive intervals where $\rho$ has the alternating masses $\eta$ and $1-\eta$:
$x_0=-\infty \leq x_1 \leq x_2 \leq \ldots \leq x_{2n} \leq x_{2n+1}=+\infty$
where
$$\rho\big((x_{2i},x_{2i+1})\big)=\eta,\qquad \rho\big((x_{2i+1},x_{2i+2})\big)=1-\eta,\qquad\forall i=0, \ldots, n-1.$$
Of course, we  automatically obtain $\rho((x_{2n},x_{2n+1}))=\eta$ too. We now define $T:(x_0,x_{2n-1}) \to (x_2, x_{2n+1})$ as the $\rho$-a.e. unique increasing function such that $\rho((x,T(x)))=1$. We have $T((x_i,x_{i+1})) \subseteq (x_{i+2}, x_{i+3})$. Iteratively we then define $T_1(x)=T(x)$ and $T_{i+1}(x)=T(T_i(x))$. Next we define $\bP^*$ by
\begin{equation}
 \bP^*_i=\begin{cases}
0&\text{for $i \notin \{n,n+1\}$},\\
\text{Sym.}\big[(\text{Id}, T_1, T_2, \ldots, T_{n-1} )_{\sharp}\rho|_{(x_1,x_2)}\big]&\text{for $i=n$},\\
\text{Sym.}\big[(\text{Id}, T_1, T_2, \ldots, T_n )_{\sharp}\rho|_{(x_0,x_1)}\big]&\text{for $i=n+1$.}
\end{cases}
 \label{eq:Monge_1D}
\end{equation}
We have $\bP^* \in \Pi_{GC}(\rho)$. If $\rho(\R)\in\N$, then $\bP^*$ is the canonical optimal plan considered in~\cite{ColPasMar-15}. Note that $\bP^*$ does not depend on the cost, it is only a function of the density $\rho$.

\begin{theorem}\label{thm:main1D}
Let $\Omega= \R$ and $\bc=(c_n)_{n\geq0}$ be a pairwise cost  as in~\eqref{eq:pairwise2} with $c_2$ satisfying~\eqref{eqn:Hp1D}. Let us consider $\rho \in \mathcal{M}(\R)$ and let $n$ be such that $n \leq \rho(\R) < n+1$. If $\mathcal{C}(\rho)< \infty$ and $c_2>0$, any optimal plan $\bP$ should satisfy ${\rm supp}(\bP) \subseteq \{n,n+1\}$. Moreover the Monge plan $\bP^*$ in~\eqref{eq:Monge_1D} is an optimal plan. If the function $w$ in~\eqref{eqn:Hp1D} is strictly convex, then it is the unique one.
\end{theorem}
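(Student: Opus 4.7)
My plan is to combine the Colombo-Pass-Marino 1D canonical result~\cite{ColPasMar-15} with the grand-canonical $\bc$-monotonicity from Lemma~\ref{lem:cmonot} and the one-dimensional convex structure.

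First (support restriction). To establish $\supp(\bP) \subseteq \{n,n+1\}$ for any optimal $\bP$, I would use Lemma~\ref{lem:cmonot}: if $\bP_N^* \neq 0$ and $\bP_K^* \neq 0$ with $N \geq K + 2$, pick typical ordered configurations $X = (x_1 < \cdots < x_N) \in \supp \bP_N^*$ and $Y = (y_1 < \cdots < y_K) \in \supp \bP_K^*$. In 1D the $x_i$ are more densely packed than the $y_k$, since both configurations lie in the support of $\rho$ but $X$ contains strictly more points. Applying~\eqref{eq:cmonot} with $I=\{j\}$ for some well-chosen ``crowded'' particle $x_j$ and $J=\{1,\dots,K\}$ produces, after pairwise expansion, the inequality $\sum_{k} c_2(x_j,y_k) \geq \sum_{i\neq j} c_2(x_j,x_i)$, which I expect to fail by a strict margin because $w$ is strictly positive, convex and decreasing. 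The exclusion of $\bP_0^*$ (when $n\geq 1$) is already provided by Lemma~\ref{lem:repulsive_N1}. If $\rho(\R)=n\in\N$, then the same swap argument forces $\bP_k^*=0$ for $k\neq n$.

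Second (optimality of the Monge plan and optimal split). Once $\supp(\bP)\subseteq\{n,n+1\}$, the constraints $\sum_k \bP_k(\R^k)=1$ and $\sum_k k\,\bP_k(\R^k)=n+\eta$ pin down the total masses $\bP_n(\R^n)=1-\eta$ and $\bP_{n+1}(\R^{n+1})=\eta$. Each normalised component $\bP_k/\bP_k(\R^k)$ is then a canonical $k$-marginal transport plan with marginal $\rho_{\bP_k}/\bP_k(\R^k)$, and by the 1D canonical result of~\cite{ColPasMar-15} must coincide with the Monge plan for that marginal. The only remaining freedom is the split $\rho = \rho_{\bP_n}+\rho_{\bP_{n+1}}$ with prescribed total masses $n(1-\eta)$ and $(n+1)\eta$. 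I would rephrase this as a 1D variational problem in quantile space and show that the interleaving split of~\eqref{eq:Monge_1D} is the unique one for which both Monge plans place consecutive transported particles at quantile distance exactly $1$; any other split forces a pair at quantile distance strictly less than $1$, increasing the cost by the strict monotonicity of $w$. The uniqueness statement under strict convexity of $w$ then follows from the uniqueness in~\cite{ColPasMar-15} for each $\bP_n,\bP_{n+1}$ combined with the strictness in this split argument.

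The main obstacle is the quantile-space split optimization, since the $n$- and $(n+1)$-particle Monge plans interact only through their shared density constraint. A cleaner route is probably via duality (Theorem~\ref{thm:duality}): build an explicit Kantorovich pair $(\beta^*,\phi^*)$ from the quantile function $F^{-1}$ of $\rho$, such that the values of $\phi^*$ on the $2n+1$ intervals between the $x_j$'s are determined by the pairwise costs $w(|F^{-1}(t+i)-F^{-1}(t+j)|)$ attached to the quantile shifts. One would then verify the pointwise inequality $\beta^* + \sum_{j=1}^{k}\phi^*(x_j) \leq c_k(x_1,\ldots,x_k)$ for every $k\geq 0$ and every configuration using convexity and monotonicity of $w$, with equality precisely on $\supp \bP^*$. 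Theorem~\ref{thm:duality} then delivers the optimality of $\bP^*$, and the strict inequality off the Monge support recovers both the support restriction and the uniqueness in one step.
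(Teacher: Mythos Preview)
Your support restriction argument has a genuine gap. The single-particle swap $I=\{j\}$, $J=\{1,\dots,K\}$ yields
\[
\sum_{k=1}^{K} c_2(x_j,y_k)\ \geq\ \sum_{i\neq j} c_2(x_j,x_i),
\]
but there is no reason this should fail for \emph{all} $j$ just because $w$ is positive, convex and decreasing. You have not used the 1D order structure at all here, and indeed this type of one-particle exchange is exactly what gives only $N\leq (2Z+1)K$ in the general setting of Theorem~\ref{thm:support_triangle}, not $|N-K|\leq 1$. The ``crowded particle'' heuristic does not close the argument: the $y_k$ need not lie anywhere near the $x_i$, and nothing forces the left side to be small.

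The paper's route is quite different and more efficient. The key device is Lemma~\ref{lem:1D}, a generalisation of~\cite[Prop.~2.4]{ColPasMar-15}: given any ordered merge $X=(x_1\leq\cdots\leq x_m)$ of two configurations $Y\in\R^k$ and $Z\in\R^\ell$, the \emph{odd/even interleaving} $(X_o,X_e)$ minimises $c_{|Y'|}(Y')+c_{|Z'|}(Z')$ among all splits $R(Y')+R(Z')=R(X)$, with equality forcing $\{k,\ell\}=\{m_o,m_e\}$ when $w>0$ and forcing $(Y,Z)\in\{(X_o,X_e),(X_e,X_o)\}$ when $w$ is strictly convex. One then picks $Y\in{\rm supp}(\bP_k)$, $Z\in{\rm supp}(\bP_\ell)$, merges them into $X$, and applies $\bc$-monotonicity~\eqref{eq:cmonot} with the \emph{multi-particle} exchange $I,J$ that realises the interleaved split. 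This gives $c_k(Y)+c_\ell(Z)\leq c_{m_o}(X_o)+c_{m_e}(X_e)$, while Lemma~\ref{lem:1D} gives the reverse inequality; equality then forces $|k-\ell|\leq 1$. The same equality cases, applied to pairs in ${\rm supp}(\bP_n)\times{\rm supp}(\bP_{n+1})$, pin down the interlacing structure and hence the Monge plan directly, so one never needs to set up a separate ``split optimisation'' in quantile space or construct an explicit dual potential. Your reduction to the canonical result of~\cite{ColPasMar-15} on each component is correct, but the determination of the split $\rho=\rho_{\bP_n}+\rho_{\bP_{n+1}}$ via your quantile or duality sketches is left vague precisely where the difficulty lies; the interleaving lemma resolves it in one stroke.
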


In order to prove this result, we first need to generalize~\cite[Prop.~2.4]{ColPasMar-15}, which states that when we split some configurations $\{x_1, \ldots, x_m\}$ into two subsystems, the lowest energy we can get is obtained when the two sets are interlaced.

\begin{lemma}\label{lem:1D} Let $\Omega= \R$ and $\bc=(c_n)_{n\geq0}$ be a pairwise cost  as in~\eqref{eq:pairwise2} with $c_2$ which satisfies \eqref{eqn:Hp1D}. Let $x_1 \leq x_2 \leq \ldots \leq x_m \in \R$ be an ordered $m$-tuple and let $X=(x_1, \ldots, x_m)$. Let moreover $X_o$ and $X_e$ be the sub-vectors of $X$ made with the odd and even coordinates respectively, that is $X_o=(x_1, x_3, \ldots, x_{m_o})$ and $X_e=(x_2, x_4, \ldots, x_{m_e})$. Let us consider $\mathcal{R}=\bigcup_{n \geq 1} \R^n$ the set of collections of points, and the function $R : \mathcal{R} \to \mathcal{M}_+ (\R)$ given by $R(x_1, \ldots, x_k)= \sum_{i=1}^k \delta_{x_i}$. Then, for every $Y \in \R^k, Z \in \R^\ell$ such that $R(Y)+R(Z)=R(X)(=R(X_o)+R(X_e))$, we have that
\begin{equation}\label{eqn:1Dineqnei} c_k(Y)+c_\ell(Z) \geq c_{m_o}(X_o) + c_{m_e}(X_e). \end{equation}
If the function $w$ in~\eqref{eqn:Hp1D} is strictly positive, equality in~\eqref{eqn:1Dineqnei} implies $\{k,\ell\}=\{m_o,m_e\}$. If moreover $w$ is strictly convex, equality holds in \eqref{eqn:1Dineqnei} if and only if $(Y,Z)\in \{ (X_o,X_e), (X_e,X_o)\}$.
\end{lemma}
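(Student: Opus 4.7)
I would prove Lemma~\ref{lem:1D} by induction on $m$, the base cases $m \leq 2$ being immediate since no internal pair contributes. For $m \geq 3$ the strategy is a local exchange argument: starting from an arbitrary partition $(Y, Z)$ with $R(Y)+R(Z)=R(X)$, I apply a finite sequence of transpositions, each weakly decreasing the cost, that transforms $(Y, Z)$ into the alternating partition $(X_o, X_e)$ (or its mirror).

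The key algebraic tool is a four-point rearrangement inequality. For $a \leq b \leq c \leq d$, one has
\[w(b-a) + w(d-c) \;\geq\; w(c-a) + w(d-b)\qquad\text{and}\qquad w(d-a) + w(c-b) \;\geq\; w(c-a) + w(d-b).\]
The first follows from the monotonicity of $w$ (since $b-a \leq c-a$ and $d-c \leq d-b$), and the second from convexity (the pair $(c-a, d-b)$ is nested inside the pair $(c-b, d-a)$ of the same total, so the more spread-out pair has the larger $w$-sum). Both show that among the three $2{+}2$ pairings of four ordered points, the alternating pairing $\{a,c\},\{b,d\}$ achieves the minimum cost, with strict inequality under strict monotonicity or strict convexity respectively.

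For the exchange step, I locate the leftmost consecutive pair $x_i, x_{i+1}$ lying in the same group, say $Y$, and (provided $Z$ contains some point to the right) the smallest $j \geq i+2$ with $x_j \in Z$. By construction $x_{i+2}, \ldots, x_{j-1}$ all lie in $Y$. Swapping $x_{i+1}$ and $x_j$ yields a new partition whose leftmost defect has moved strictly to the right. The cost change is bounded by applying the four-point inequality to the quadruple $(x_i, x_{i+1}, x_j, x_{j+1})$ (or an analogous one involving $x_{i-1}$ if it exists) combined with monotonicity estimates for the interactions of $x_{i+1}$ and $x_j$ against each intermediate point and against the points outside $[x_i, x_{j+1}]$. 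Iterating finitely many times produces an alternating configuration. The degenerate case $Z=\emptyset$ is handled directly by
\[c_m(X) - c_{m_o}(X_o) - c_{m_e}(X_e) \;=\; \sum_{\substack{i<j\\ j-i \text{ odd}}} w(x_j - x_i) \;\geq\; 0,\]
using only $w \geq 0$.

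The main obstacle I anticipate is showing that the full cost difference produced by a single swap is non-positive when many intermediate points lie between $x_{i+1}$ and $x_j$; this will require either an iterated four-point argument or a direct convexity/monotonicity estimate, and this is the step where the one-dimensional ordering is truly used. For the equality claims: when $\{k,\ell\}\neq\{m_o,m_e\}$ the reduction must at some stage drop a pair (the $Z=\emptyset$-type argument), so $w>0$ upgrades the inequality to a strict one; when $\{k,\ell\}=\{m_o,m_e\}$ but $(Y,Z)$ is not already alternating, at least one swap in the reduction is non-trivial and strict convexity of $w$ turns the four-point inequality into a strict one, forcing $(Y,Z)\in\{(X_o,X_e),(X_e,X_o)\}$.
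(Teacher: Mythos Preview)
Your exchange strategy has a genuine gap: the specific swap you describe can \emph{increase} the total cost, so the ``each step weakly decreases the cost'' claim fails. Take $m=6$, points $x=(0,1,2,2.1,100,101)$, $w(t)=e^{-t}$, and $Y=\{x_1,x_2,x_3\}$, $Z=\{x_4,x_5,x_6\}$. The leftmost defect is $(x_1,x_2)$, the first $Z$-point to the right is $x_4$, and your rule swaps $x_2\leftrightarrow x_4$, producing $Y'=\{x_1,x_3,x_4\}$. But $c(Y')-c(Y)=w(2.1)+w(0.1)-2w(1)\approx 0.29>0$ while $c(Z')-c(Z)$ is negligible, so the swap strictly raises the cost. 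The problem is that pulling $x_j$ into $Y$ creates the new close pair $(x_{j-1},x_j)$ inside $Y$; no four-point inequality involving $x_i,x_{i+1},x_j,x_{j+1}$ controls this, because the damaging interaction is with the \emph{intermediate} point $x_{j-1}$. The obstacle you flag is therefore not merely technical: the inequality you hope for is simply false for this swap.

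There is a second structural issue: transpositions preserve $(|Y|,|Z|)$, so when $\{k,\ell\}\neq\{m_o,m_e\}$ you can never reach the alternating partition at all. You handle only the extreme case $Z=\emptyset$, but already $(k,\ell)=(3,1)$ with $m=4$ is not covered; your phrase ``the reduction must at some stage drop a pair'' has no mechanism behind it.

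The paper avoids both difficulties by a completely different decomposition. It splits the cost by \emph{neighbour distance}, writing $c_k(Y)=\sum_{n\geq 1}c^n_k(Y)$ with $c^n_k(Y)=\sum_{i=1}^{k-n}w(y_{i+n}-y_i)$ for sorted $Y$, and proves $c^n_k(Y)+c^n_\ell(Z)\geq c^n_{m_o}(X_o)+c^n_{m_e}(X_e)$ for each fixed $n$. At level $n$ the left side is a sum of $k_n=(k-n)_++(\ell-n)_+$ terms $w(\text{gap})$; one lists the left endpoints and right endpoints of these gaps, observes that $k_n\geq k_n^*:=(m_o-n)_++(m_e-n)_+$, applies the classical monotone-coupling inequality for convex costs in one dimension to replace the matching by the sorted one, and then uses monotonicity of $w$ to enlarge the gaps to those of the alternating partition. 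This global rearrangement handles unequal cardinalities and all intermediate points simultaneously, which is exactly what your local swap cannot do.
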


\begin{proof}
Since $\bc$ is a symmetric cost, we can assume without loss of generality that the coordinates of $Y=(y_1, \ldots, y_k)$ and $Z=(z_1, \ldots, z_\ell)$ are increasingly ordered. We then divide the contribution of the interaction energy into $1$-neighbors, $2$-neighbors and so on:
$$c_k(Y)= \sum_{n=1}^{k-1} \sum_{i=1}^{k-n} c_2(y_i,y_{i+n}) = \sum_{n=1}^{k-1} c^n_k(Y),\qquad c^n_k(Y):=\sum_{i=1}^{k-n} c_2(y_i,y_{i+n}).$$
We claim that $c^n_k(Y)+c^n_\ell(Z) \geq c^n_{m_o}(X_o)+c^n_{m_e}(X_e)$ for every $n\geq 1$. Then the lemma follows by summing up these inequalities.
We know that the number of terms in $c^n_k(Y)+c^n_\ell(Z)$ is $k_n=(k-n)_+ + (\ell-n)_+$ which is always greater than the number of terms on the right hand side $k_n^*=(m_o-n)_+ + (m_e-n)_+$. Let us now consider the collection of \emph{left} $n$-neighbors and \emph{right} $n$-neighbors for $Y$ and $Z$, and let us order  according to their indexes in $x$:
$$L_n=\{y_1, \ldots, y_{(k-n)_+}, z_1, \ldots, z_{(\ell-n)_+}\}=\{ x_{i_1} , x_{i_2} ,\ldots, x_{i_{k_n}}\},$$
$$R_n=\{y_{n+1}, \ldots, y_{k}, z_{n+1}, \ldots, z_{\ell}\}=\{ x_{j_1} , x_{j_2} ,\ldots, x_{j_{k_n}}\},$$
where in $L_n$ and $R_n$ we list the points with multiplicities.
Of course we will have that $i_1 \geq 1$, $i_2 \geq 2$, \ldots $i_{k_n} \geq k_n$ and similarly $j_{k_n} \leq m, j_{k_n -1} \leq m-1, \ldots, j_1 \leq m-k_n+1$. We now consider the cost function $c(s,t)= w(s-t)$. Since $w$ is convex, classical results in $1$-dimensional optimal transport with cost function $c$ and measures $\mu= \sum_{r=1}^{k_n} \delta_{x_{i_r}}$, $\nu=\sum_{i=1}^{k_n} \delta_{x_{j_r}}$, yields that $c^n_k(Y)+c^n_\ell(Z) \geq \sum_{r=1}^{k_n} c(x_{i_r}, x_{j_r})$. Now we can use the fact that $w$ is decreasing to show that
$c(x_{i_r}, x_{j_r}) \geq c(x_{r}, x_{m-k_n+r})\geq c(x_{r}, x_{m-k^*_n+r}).$
Summing up these inequalities we finally have
\begin{align*}
c^n_k(Y)+c^n_l(Z) &\geq  \sum_{r=1}^{k_n} c(x_{i_r}, x_{j_r}) \geq  \sum_{r=1}^{k^*_n} c(x_{i_r}, x_{j_r}) \\
& \geq  \sum_{r=1}^{k_n} c(x_r, x_{m-k^*_nn+r}) = c^n_{m_o}(X_o)+c^n_{m_e}(X_e).
\end{align*}
As for the equality cases, notice that if $w >0$ then we need to have $k_n=k^*n$ for every $n$ (otherwise the second inequality would be strict), which happens only if $\{\ell,k\}=\{n_e,n_o\}$.
Moreover similarly to \cite[Lemmas~3.4 and~3.5]{ColPasMar-15}, we can see that whenever $w$ is strictly convex, the equality case happens only when $Y=X_e$ and $Z=X_o$ or vice-versa.
\end{proof}

\begin{proof}[Proof of Theorem~\ref{thm:main1D}]
We start by observing that if $\cC(\rho)< \infty$ we have that any optimal plan $\bP$ is $c$-monotone in the sense of Lemma \ref{lem:cmonot}. However, for every $Y \in {\rm supp}(\bP_k)$ and $Z \in {\rm supp} (\bP_\ell)$, we can find $X \in \R^{k+\ell}$ with ordered coordinates and $I \subseteq \{1, \ldots, k\}$, $J \subseteq \{1, \ldots, \ell\}$ such that $X_o=(Y_I,Z_J)$ and $X_e=(Y_{I^c},Z_{J^c})$. Lemma \ref{lem:cmonot} gives $c_k(Y)+c_\ell(Z) \leq c_{m_o}(X_o)+c_{m_e}(X_e)$ and Lemma \ref{lem:1D} provides the reverse inequality. Thus there is equality. From the equality conditions in Lemma \ref{lem:1D} we then obtain that $\{k,\ell\}=\{m_e,m_o\}$, in particular $|k-\ell|\leq 1$. This means that ${\rm supp}(\bP)$ can only have at most two elements, which are next to each other. Therefore ${\rm supp}(\bP)\subseteq\{n,n+1\}$, with equality if and only if $\rho(\R) \notin \N$. This observation and the mass condition ensure also that if $\rho(\R)\in \N$ then necessary ${\rm supp}(\bP)=\{ \rho(\R)\}$, that is, every grand-canonical optimal plan is actually a canonical (optimal) plan. If $w$ is strictly convex we can follow the same reasoning as in the proof of \cite[Thm.~1.1]{ColPasMar-15} to prove that the Monge state $\bP^*$ in~\eqref{eq:Monge_1D} is the unique optimal plan. By an approximation procedure, we eventually can conclude that $\bP^*$ is optimal also in the case where $w$ is not strictly convex nor strictly positive. For instance we can choose $w_{\eps}(t)=\phi(t)+ \epsilon \cdot e^{-t}$ and then let $\epsilon\to 0$.
\end{proof}

\begin{remark}[Local optimality in 1D]
In Section~\ref{sec:localization}, we have discussed the concept of localization and mentioned that, in general, some information is lost under this procedure. In this respect, the 1D convex case is very special. It satisfies the unusual property that any subsystem is automatically optimal for its corresponding density. Conversely, there is a natural way to glue two separate optimal subsystems and still obtain an optimal plan for the union.
To be more precise, let $\Omega_1, \Omega_2$ be two disjoint intervals in $\mathbb{R}$, and let $\rho_1, \rho_2$ be such that $\rho_i \in \mathcal{M}(\Omega_i)$ for $i=1,2$. Let us then consider $\rho=\rho_1+\rho_2$. Let $\bP^*$ be the optimal grand-canonical Monge plan~\eqref{eq:Monge_1D} for $\rho$ and $\bP_i^*$ the ones for $\rho_i$. A calculation shows that $\bP^*_{|\Omega_i} = \bP^*_i$, that is, localizing optimal grand-canonical plans gives rise to optimal plans again. Conversely, if we start from two optimal plans $\bP^*_i$ and assume strict convexity of $w$, we know that $\bP^*$ is the unique optimal plan and it must have $\bP^*_i$ as localization to $\Omega_i$. This gives a natural and unique way of gluing the two plans $\bP^*_1$ and $\bP^*_2$ into a plan which is still optimal.

Another feature may be of interest. Suppose $\rho_i \in \mathcal{M}(\Omega_i)$ for $i=1,2$ with $\Omega_i$ bounded, and let $\rho^R=\rho_1+\rho_2(\cdot-R)$ obtained by translating $\Omega_2$ far away. If $\rho_1(\Omega_1)+\rho_2(\Omega_2)=N \in \mathbb{N}$, we can consider a \emph{canonical} optimizer $\bP^R$ problem for the measure $\rho^R$. In the limit $R \to \infty$ one could guess that the two localized plans $\bP^R_{|\Omega_1}$ and $\bP^R_{|\Omega_2+R}(\cdot+R)$ will converge to \emph{grand-canonical} optimal plans $\bP^*_i$ on $\Omega_i$. But this cannot be true in general. Some long range correlations have to persist in the limit in some cases. In fact, the canonical plan $\bP^R$ satisfies the property $\bP^R_{|\Omega_1,n}(\Omega_1^n) = \bP^R_{|\Omega_2,N-n} (\Omega_2^{N-n})$, since the probability that $n$ points be in $\Omega_1$ coincides with that to have $N-n$ points in $\Omega_2$. Thus, if for our grand-canonical optimal plans we have $\bP_{1,n}(\Omega_1^n) \neq \bP_{2,N-n} (\Omega_2^{N-n})$, the expected limit cannot hold. As also follows from the previous discussion, one can verify that our Monge plan~\eqref{eq:Monge_1D} does satisfy the matching property $\bP_{1,n}(\Omega_1^n)= \bP_{2,N-n} (\Omega_2^{N-n})$.

Notice that in \cite{MirSeiGor-13} the optimal grand canonical state is defined as a localization of a canonical known state.
\end{remark}

\section{Entropic regularization}\label{sec:entropy}

Here we discuss the entropic regularization of $\cC(\rho)$ which, from a statistical mechanics point of view, amounts to adding temperature in the system.
In the Physics literature, the problem of finding the external potential for which the associated grand-canonical equilibrium classical (Gibbs) state at a temperature $T>0$ has the given density $\rho$, plays an important role in the density functional theory of classical inhomogeneous systems~\cite{Evans-79,Evans-92}. Developed in the 1960-70s~\cite{MorHir-61,DeDominicis-62,StiBuf-62,LebPer-63,Evans-79}, this theory is for instance used to describe the liquid-gas interface~\cite{EbnSaaStr-76,YanFleGib-76}, or the solid-fluid interface~\cite{RamYus-77,Baus-87}. Most Physics papers on the subject use the grand-canonical framework defined in this section. The canonical model, which is just the entropic regularization of the multi-marginal problem $\cC_N(\rho)$~\cite{BenCaCuNenPey-15,CarDuvPeySch-17,CarLab-20,MarGer-20}, has been used quite lately, e.g. in~\cite{WhiGonRomVel-00}.

On the mathematical side, this section will rely on the fundamental works of Chayes, Chayes and Lieb~\cite{ChaChaLie-84,ChaCha-84} in 1984 who, to our knowledge, were the first to prove the existence of the dual potential $V$ for such systems with entropy, both in the $N$-marginal and grand-canonical cases.

\subsection{Entropy}
In order to have a correct behavior it is necessary to place the right `Boltzmann $n!$' in the definition of the entropy, an issue which does not occur for the multi-marginal problem at fixed $n$.  The entropy of a grand-canonical probability $\bP=(\bP_n)_{n\geq0}$ is defined by~\cite{RobRue-67,ChaChaLie-84,ChaCha-84}
\begin{equation}
\cS(\bP):=-\sum_{n\geq0}\int_{\Omega^n}\bP_n\log\big(n!\bP_n\big),
 \label{eq:def_entropy}
\end{equation}
with the condition that each $\bP_n$ is absolutely continuous with respect to the Lebesgue measure on $\Omega^n$. Otherwise, $\cS(\bP)$ is taken equal to $-\ii$. Note the minus sign in the definition (the usual convention in statistical mechanics).

The reason for adding the $n!$ is the following. Our system describing independent agents, we should in principle not work on $\Omega^n$ but, rather, in the simplex of volume $1/n!$ obtained by moding out the permutations. It is therefore more natural to consider $\bQ_n:=n!\,\bP_n$ which is such that
$$\sum_{n\geq0}\bQ_n\big(\Omega^n/\gS_n\big)=\sum_{n\geq0}\frac{\bQ_n\left(\Omega^n\right)}{n!}=\sum_{n\geq0}\bP_n\left(\Omega^n\right)=1.$$
Then we have simply
$\cS(\bP):=-\sum_{n\geq0}\int_{\Omega^n/\gS_n}\bQ_n\log\big(\bQ_n\big).$

The entropy in~\eqref{eq:def_entropy} has no sign. This is because we have used the Lebesgue measure as a reference and $\Omega$ was not assumed to have a finite measure. If $|\Omega|<\ii$, then we can use
$e^{-|\Omega|}\sum_{n\geq0}\frac{|\Omega|^n}{n!}=1$
and Jensen's inequality to obtain as in~\cite{RobRue-67,Wehrl-78}
\begin{align*}
\cS(\bP)&=-\sum_{n\geq0}\frac1{n!}\int_{\Omega^n}(n!\bP_n)\log\big(n!\bP_n\big)\\
&\leq -e^{|\Omega|}\left(e^{-|\Omega|}\sum_{n\geq0}\bP_n(\Omega^n)\right)\log\left(e^{-|\Omega|}\sum_{n\geq0}\bP_n(\Omega^n)\right)=|\Omega|.
\end{align*}
In other words, we obtain a negative entropy after replacing $n!\bP_n$ by $e^{|\Omega|}n!\bP_n$ in the logarithm. The following is a replacement when $\Omega$ is arbitrary but $\int_\Omega\rho|\log\rho|<\ii$ and uses that the entropy is maximized for Poisson states.

\begin{lemma}[Maximal entropy]\label{lem:max_entropy}
Let $\rho\in L^1(\Omega,\R_+)$ be such that $\int_\Omega \rho|\log\rho|<\ii$. Then we have
\begin{equation}
\cS(\bP)\leq \int_\Omega \rho-\int_{\Omega}\rho\log\rho
\label{eq:max_entropy}
\end{equation}
for every $\bP\in\PiGC(\rho)$. There is equality only for the Poisson state $\bG_\rho=(\bG_{\rho,n})_{n\geq0}$ from Example~\ref{ex:Poisson}.
\end{lemma}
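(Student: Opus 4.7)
The plan is to recognize the inequality as the Gibbs inequality (non-negativity of relative entropy) taken against the Poisson reference state $\bG_\rho$ of Example~\ref{ex:Poisson}. In other words, among all grand-canonical probabilities with density $\rho$, the Poisson state has maximal entropy, and the asserted identity/inequality simply quantifies this fact.

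First I would dispose of the trivial case: if any $\bP_n$ fails to be absolutely continuous with respect to Lebesgue on $\Omega^n$, then $\cS(\bP)=-\ii$ and there is nothing to prove. Assume from now on that $\bP$ is absolutely continuous. The constraint $\rho_{\bP_n}\leq \rho_{\bP}=\rho$ implies that each $\bP_n$ is supported inside $\{\rho>0\}^n$ (its first marginal vanishes where $\rho$ does, and by symmetry so does the full measure), which in turn gives $\bP_n\ll \bG_{\rho,n}$.

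Next I would view both $\bP$ and $\bG_\rho$ as probability densities on the disjoint union $\Lambda=\bigsqcup_{n\geq 0}\Omega^n$ endowed with the reference measure $\mu$ whose restriction to $\Omega^n$ is Lebesgue divided by $n!$. Setting $\bQ_n:=n!\,\bP_n$ and $\bQ_n^G:=n!\,\bG_{\rho,n}=e^{-\rho(\Omega)}\rho^{\otimes n}$ gives two genuine probability densities on $(\Lambda,\mu)$. The factor $n!$ built into the definition~\eqref{eq:def_entropy} is exactly what makes $\cS(\bP)=-\int_{\Lambda}\bQ\log\bQ\,d\mu$ the differential entropy of $\bQ$ with respect to~$\mu$; without it Jensen would not deliver a clean answer. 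I would then write
\[
H(\bP\|\bG_\rho):=\int_{\Lambda}\bQ\log(\bQ/\bQ^G)\,d\mu\geq 0,
\]
which is Jensen applied to $-\log$ (since $\bQ^G\cdot\mu$ is a probability and $\bQ/\bQ^G$ integrates to $1$ against it), with equality iff $\bQ=\bQ^G$ $\mu$--a.e., that is, $\bP=\bG_\rho$. Using the explicit form $\log\bQ_n^G=-\rho(\Omega)+\sum_{i=1}^n\log\rho(x_i)$ together with $\sum_n\bP_n(\Omega^n)=1$ and $\sum_n\rho_{\bP_n}=\rho$, a short unfolding yields
\[
\int_{\Lambda}\bQ\log\bQ^G\,d\mu=-\rho(\Omega)+\int_{\Omega}\rho\log\rho.
\]
Rearranging $H(\bP\|\bG_\rho)=-\cS(\bP)-\int_{\Lambda}\bQ\log\bQ^G\,d\mu\geq 0$ then produces the claimed bound, with the equality case $\bP=\bG_\rho$.

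The only technical point that needs genuine care is ensuring that $\int_{\Lambda}\bQ\log\bQ^G\,d\mu$ is unambiguously a finite number, so that splitting the relative entropy into two pieces is legitimate even when $\cS(\bP)=-\ii$. This is where the assumption $\int\rho|\log\rho|<\ii$ enters: from the pointwise bound $|\log\bQ_n^G|\leq \rho(\Omega)+\sum_{i=1}^n|\log\rho(x_i)|$ one gets $\int_\Lambda \bQ\,|\log\bQ^G|\,d\mu\leq \rho(\Omega)+\int_\Omega\rho|\log\rho|<\ii$. Everything else in the argument is routine bookkeeping, and I expect no further obstacle.
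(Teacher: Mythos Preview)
Your proposal is correct and follows essentially the same approach as the paper: both prove the inequality by recognizing that $\cS(\bG_\rho)-\cS(\bP)$ equals the grand-canonical relative entropy $\cH(\bP,\bG_\rho)\geq0$, using the explicit form of $\bG_{\rho,n}$ together with the constraints $\sum_n\bP_n(\Omega^n)=1$ and $\rho_\bP=\rho$. Your version is somewhat more explicit about the measure-theoretic setup on $\Lambda=\bigsqcup_n\Omega^n$ and, usefully, about why the hypothesis $\int\rho|\log\rho|<\ii$ guarantees that the cross term $\int\bQ\log\bQ^G$ is a finite number so the splitting is legitimate; the paper simply records these as ``a calculation shows''.
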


\begin{proof}
A calculation shows that the entropy $\cS(\bG_\rho)$ of the Poisson state is equal to the right side of~\eqref{eq:max_entropy}. Another calculation shows that
\begin{align*}
\cS(\bG_\rho)-\cS(\bP)&=\sum_{n\geq0}\int_{\Omega^n}\bP_n\log\left(\frac{\bP_n}{\bG_{\rho,n}}\right) +\sum_{n\geq0}\int_{\Omega^n}(\bP_n-\bG_{\rho,n})\log(n!\bG_{\rho,n})\\
&=\sum_{n\geq0}\int_{\Omega^n}\bP_n\log\left(\frac{\bP_n}{\bG_{\rho,n}}\right) +\int_\Omega (\rho_\bP-\rho_{\bG_{\rho}})\log\rho\\
&=\sum_{n\geq0}\int_{\Omega^n}\bP_n\log\left(\frac{\bP_n}{\bG_{\rho,n}}\right)=:\cH(\bP,\bG_{\rho}),
\end{align*}
for every $\bP\in\PiGC(\rho)$. The relative entropy $\cH(\bP,\bG_\rho)$ of the two grand-canonical probabilities is non-negative and vanishes only when $\bP=\bG_{\rho}$.
\end{proof}

In the case of the harmonic cost, it is natural to assume that $\rho$ has a finite second moment, $\int_{\R^d}|x|^2\rho<\ii$. Then $\int_{\R^d}\rho\log\rho$ is well defined in $\R\cup\{+\ii\}$, without the absolute value in the integrand~\cite{CarDuvPeySch-17}. This is because we can use the Gaussian $g(x):=(\pi)^{-d/2}e^{-|x|^2}$ as reference and rewrite
\begin{multline}
 \int_{\R^d}\rho\log\rho=\rho(\R^d)\cH\left(\frac{\rho}{\rho(\R^d)},g\right)-\frac{d}{2}\log\pi \rho(\R^d)\\-\int_{\R^d}|x|^2\rho+\rho(\R^d)\log\rho(\R^d).
 \label{eq:entropy_gaussian}
\end{multline}
The first term on the right is the relative entropy (also called Kullback-Leibler divergence~\cite{KullLei-51}) $\cH(f,g)=\int_{\R^d}f\log(f/g)$ of $f,g$ which is well defined in $[0,+\ii]$. The proof of Lemma~\ref{lem:max_entropy} is a bit in the same spirit. Under the assumptions on $\rho$ in the statement, our entropy equals
\begin{equation}
\cS(\bP)=\int_\Omega\rho-\int_\Omega\rho\log\rho-\cH(\bP,\bG_{\rho})
\label{eq:formula_entropy_Poisson}
\end{equation}
for every $\bP\in\PiGC(\rho)$, where $\bG_{\rho}$ is the Poisson state and $\cH$ is the (grand canonical) relative entropy, which is non-negative and vanishes only at $\bP=\bG_{\rho}$. In the following it will be convenient to remove the two constants and replace $\cS(\bP)$ by $-\cH(\bP,\bG_{\rho})$ throughout. This corresponds to taking the Poisson state $\bG_\rho$ as reference and was called a renormalization procedure in~\cite[Eq.~(2.12)]{ChaCha-84}. This will in fact allow us to remove the condition $\int\rho|\log\rho|<\ii$.

Next we derive a useful estimate on the growth of the number of particles for states with finite entropy relative to $\bG_\rho$.

\begin{lemma}[Estimate on the growth in $n$]\label{lem:large_n_control}
Let $\rho$ be a finite non-negative measure over $\Omega$. Then, we have for every $\bP\in\PiGC(\rho)$
\begin{equation}
 \sum_{n\geq0}\log(n!)\,\bP_n(\Omega^n)\leq \cH(\bP,\bG_\rho)+\log2+\rho(\Omega)\log\rho(\Omega).
 \label{eq:estim_n_entropy}
\end{equation}
\end{lemma}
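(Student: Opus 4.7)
If $\cH(\bP,\bG_\rho)=+\infty$ the inequality is trivial, so we may assume $\bP\ll\bG_\rho$. Set $M:=\rho(\Omega)$, $\lambda_n:=\bP_n(\Omega^n)$ and $p_n:=\bG_{\rho,n}(\Omega^n)=e^{-M}M^n/n!$. The guiding idea is to reduce to the one-dimensional law of the number of agents by the data-processing inequality, and then to apply the Gibbs variational principle on $\N$ with a well-chosen reference. For each $n$ with $\lambda_n>0$, write $\bP_n=\lambda_n\widetilde\bP_n$ and $\bG_{\rho,n}=p_n\widetilde\bG_{\rho,n}$ with $\widetilde\bP_n,\widetilde\bG_{\rho,n}$ probability measures on $\Omega^n$. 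Then
\begin{equation*}
\int_{\Omega^n}\bP_n\log\frac{\bP_n}{\bG_{\rho,n}}=\lambda_n\log\frac{\lambda_n}{p_n}+\lambda_n\,\cH(\widetilde\bP_n,\widetilde\bG_{\rho,n})\geq\lambda_n\log\frac{\lambda_n}{p_n}
\end{equation*}
(with the usual convention $0\log 0=0$ on terms where $\lambda_n=0$). Summing over $n$ yields $\cH(\bP,\bG_\rho)\geq\cH(\lambda|p)$, where $\cH(\lambda|p):=\sum_{n\geq0}\lambda_n\log(\lambda_n/p_n)$ is the discrete relative entropy on $\N$.

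Next I would expand $-\log p_n=M-n\log M+\log(n!)$ and use the two constraints $\sum\lambda_n=1$ and $\sum n\lambda_n=M$ to obtain
\begin{equation*}
\cH(\lambda|p)=\sum_{n\geq0}\lambda_n\log\lambda_n+M-M\log M+\sum_{n\geq0}\lambda_n\log(n!).
\end{equation*}
Rearranging and inserting the bound from the previous paragraph,
\begin{equation*}
\sum_{n\geq0}\lambda_n\log(n!)\leq\cH(\bP,\bG_\rho)-\sum_{n\geq0}\lambda_n\log\lambda_n-M+M\log M.
\end{equation*}

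It remains to control the Shannon entropy $-\sum_n\lambda_n\log\lambda_n$ linearly in $M$; this is the step that produces the constant $\log 2$. I would compare $\lambda$ with the geometric reference distribution $q_n:=(1-e^{-1})e^{-n}$ on $\N$. Non-negativity of $\cH(\lambda|q)$ gives $-\sum\lambda_n\log\lambda_n\leq-\sum\lambda_n\log q_n$, and a direct computation yields
\begin{equation*}
-\sum_{n\geq0}\lambda_n\log q_n=-\log(1-e^{-1})+M=M+\log\frac{e}{e-1}\leq M+\log 2,
\end{equation*}
the last inequality being equivalent to the elementary estimate $e\leq 2(e-1)$. Plugging this into the previous display cancels the $-M$ term and produces exactly~\eqref{eq:estim_n_entropy}.

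The main subtle step is the data-processing reduction to the discrete law on $\N$: one has to treat carefully indices $n$ with $\lambda_n=0$ and justify the split of $\log(\bP_n/\bG_{\rho,n})$; everything else is a direct computation. The only ``trick'' is the choice of the reference $q_n=(1-e^{-1})e^{-n}$, whose ratio $e^{-1}$ is precisely what makes $-\log(1-e^{-1})\leq\log 2$, which is what the statement demands; any other attempt to sharpen this (for instance comparing with the maximal-entropy geometric law of mean $M$) would force an explicit calculus argument to verify $(M+1)\log(M+1)-M\log M\leq M+\log 2$, which is why the fixed reference is cleaner.
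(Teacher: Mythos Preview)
Your proof is correct and follows essentially the same three-step route as the paper: reduce to the discrete law $(\lambda_n)$ on $\N$ via the data-processing inequality, expand $\cH(\lambda|p)$ using the two constraints $\sum\lambda_n=1$ and $\sum n\lambda_n=\rho(\Omega)$, and then bound the Shannon entropy of $(\lambda_n)$ by comparing with a geometric reference distribution. The only cosmetic difference is the choice of geometric reference---the paper uses $q_n=2^{-n-1}$ (ratio $1/2$) rather than your $q_n=(1-e^{-1})e^{-n}$ (ratio $1/e$), which yields a slightly different but comparable constant; both choices prove the stated inequality.
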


The result says that when $\rho(\Omega)=\sum_{n\geq0}n\,\bP_n(\Omega^n)$ and $\cH(\bP,\bG_\rho)$ are both finite, we have the slightly better control $\sum_{n\geq0}n\log(n)\,\bP_n(\Omega^n)\leq C$ for large $n$.

\begin{proof}
We use that
$$\int f\log\frac{f}{g}=\left(\int f\right)\log\frac{\int f}{\int g}+\left(\int f\right)\cH\left(\frac{f}{\int f}\,,\, \frac{g}{\int g}\right)\geq \left(\int f\right)\log\frac{\int f}{\int g}$$
for any non-negative functions $f,g$. We obtain
\begin{align*}
\cH(\bP,\bG_\rho)&=\sum_{n\geq0}\int_{\Omega^n}\bP_n\log\left(\frac{\bP_n n!e^{\rho(\Omega)}}{\rho^{\otimes n}}\right)\\
&\geq \sum_{n\geq0}\bP_n(\Omega^n)\log\left(\frac{\bP_n(\Omega^n) n!e^{\rho(\Omega)}}{\rho(\Omega)^n}\right)\\
&= \sum_{n\geq0}\bP_n(\Omega^n)\log\left(\bP_n(\Omega^n) 2^{n+1}\right)+\sum_{n\geq0}\bP_n(\Omega^n)\log(n!)\\
&\qquad-\rho(\Omega)\log\rho(\Omega)-\log2+(1-\log2)\rho(\Omega).
\end{align*}
The first term on the right is the relative entropy of the two discrete probabilities $\big(\bP_n(\Omega^n)\big)_{n\geq0}$ and $(2^{-1-n})_{n\geq0}$, hence it is non-negative.
\end{proof}

\subsection{Grand-canonical problem at $T>0$}
For any finite non-negative measure $\rho$ on $\Omega$, the grand-canonical optimization problem reads
\begin{equation}
\boxed{\cF_T(\rho):=\inf_{\bP\in\PiGC(\rho)}\Big\{\bP(\bc)+T\cH(\bP,\bG_{\rho})\Big\}.}
\label{eq:def_cF_T}
\end{equation}
We will assume that the cost $\bc$ is stable, $c_n\geq -A-Bn$, so that $\bP(\bc)\geq -A-B\rho(\Omega)$. The infimum is thus finite for one $T>0$ if and only if there exists a $\bP\in\PiGC(\rho)$ such that $\bP(\bc)$ and $\cH(\bP,\bG_{\rho})$ are simultaneously finite. In this case $\cF_T(\rho)$ is actually finite for all $T>0$. Otherwise, we have $\cF_T(\rho)=+\ii$ for all $T>0$. For instance we can simply assume that the cost  $\bG_{\rho}(\bc)$ is finite for the Poisson state. Note also that $\cF_T(\rho)$ is non-decreasing and concave in $T$. The following is a consequence of the known properties of the relative entropy $\cH$.

\begin{theorem}[Existence for the positive temperature problem]\label{thm:existence_temp}
Let $\Omega\subset\R^d$ be any Borel set. Let $\bc=(c_n)_{n \geq 0} $ be a stable family of lower semi-continuous costs. Let $\rho$ be a finite measure such that $\cF_T(\rho)<\ii$ for one (hence all) $T>0$.

\medskip

\noindent $(i)$ $\cF_T(\rho)$ admits a \emph{unique minimizer $\bP^{(T)}$} for all $T>0$.

\medskip

\noindent $(ii)$ In the limit $T\to0^+$ we have
\begin{equation}
\lim_{T\to0^+}\cF_T(\rho)=\inf_{\substack{\bP\in\PiGC(\rho)\\ \cH(\bP,\bG_\rho)<\ii}}\bP(\bc)=:\cF_0(\rho).
\label{eq:liimt_zero_temp}
\end{equation}
If $\bc=(c_n)_{n\geq0}$ is superstable and the right side is equal to $\cC(\rho)$, then $\bP^{(T)}$ converges, up to subsequences, to a minimizer $\bP^*\in\PiGC(\rho)$ for $\cC(\rho)$, in the sense that  $\bP^{(T)}_n\wto\bP^*_n$ narrowly for all $n\geq0$.

\medskip

\noindent $(iii)$ If $\bG_\rho(\bc)<\ii$, then in the limit $T\to\ii$ we have
$$\lim_{T\to\ii}\cF_T(\rho)=\bG_\rho(\bc),$$
$$\lim_{T\to\ii}T\,\cH(\bP^{(T)},\bG_\rho)=\lim_{T\to\ii} \sum_{n\geq0}\int_{\Omega^n}|\bP^{(T)}_n-\bG_{\rho,_n}|=0.$$
\end{theorem}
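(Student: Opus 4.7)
The plan is to handle the three parts in order, using (a) strict convexity of $\cH(\cdot,\bG_\rho)$, (b) Lemma~\ref{lem:large_n_control} for the compactness in $n$, and (c) the Fatou-type arguments already developed for Theorem~\ref{thm:existence}.

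For part $(i)$, uniqueness is immediate because $\bP\mapsto \cH(\bP,\bG_\rho)$ is strictly convex (it is a relative entropy on the combined measure space $\bigsqcup_n \Omega^n/\gS_n$) while $\bP\mapsto \bP(\bc)$ is linear. For existence, I would take a minimizing sequence $\bP^k$ and use the stability bound $\bP^k(\bc)\geq -A-B\rho(\Omega)$ to get $T\cH(\bP^k,\bG_\rho)\leq \cF_T(\rho)+A+B\rho(\Omega)$; Lemma~\ref{lem:large_n_control} then yields a uniform bound on $\sum_{n\geq0}\log(n!)\,\bP^k_n(\Omega^n)$. Since $\rho_{\bP^k_n}\leq\rho$ for every $k,n$, each $\bP^k_n$ is tight in $\Omega^n$, and a diagonal extraction gives $\bP^k_n\weakto \bP^*_n$ in duality with $C_b$. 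The super-linear weight $\log(n!)\sim n\log n$ provides the uniform integrability of $(n\,\bP^k_n(\Omega^n))_n$ needed to pass to the limit in both $\sum_n n\bP^k_n(\Omega^n)=\rho(\Omega)$ and $\rho_{\bP^k}=\rho$, so $\bP^*\in\PiGC(\rho)$. Lower semicontinuity of the cost follows as in Theorem~\ref{thm:existence} by replacing $c_n$ with the non-negative $\tilde c_n=c_n+Bn+A$ and applying Fatou in $\ell^1$, while weak lower semicontinuity of $\cH(\cdot,\bG_\rho)$ is standard, so $\bP^*$ is the minimizer.

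For part $(ii)$, monotonicity in $T$ gives $\cF_T(\rho)\geq \cF_0(\rho)$ trivially from $T\cH\geq0$ and admissibility of $\bP^{(T)}$ in the infimum defining $\cF_0(\rho)$. For the upper bound, for any $\bP$ with $\cH(\bP,\bG_\rho)<\infty$ and $\bP(\bc)<\infty$ we have $\cF_T(\rho)\leq \bP(\bc)+T\cH(\bP,\bG_\rho)$, and letting $T\to0^+$ then infimizing over $\bP$ gives $\limsup_{T\to0}\cF_T(\rho)\leq \cF_0(\rho)$. If in addition $\cF_0(\rho)=\cC(\rho)$ and $\bc$ is super-stable, then $\bP^{(T)}(\bc)\leq \cF_T(\rho)$ is bounded as $T\to0$, so $\bP^{(T)}$ is a bounded-energy minimizing sequence for $\cC(\rho)$ and the super-stability argument inside the proof of Theorem~\ref{thm:existence} applies verbatim to extract a subsequence converging tightly to a minimizer of $\cC(\rho)$.

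For part $(iii)$, testing with $\bP=\bG_\rho$ in~\eqref{eq:def_cF_T} gives $\cF_T(\rho)\leq \bG_\rho(\bc)$. Combined with $\cF_T(\rho)\geq\bP^{(T)}(\bc)\geq -A-B\rho(\Omega)$, this forces $T\cH(\bP^{(T)},\bG_\rho)\leq \bG_\rho(\bc)+A+B\rho(\Omega)$, hence $\cH(\bP^{(T)},\bG_\rho)\to0$. Viewing $\bP$ and $\bG_\rho$ as probabilities $\bQ_n=n!\bP_n$ on $\bigsqcup_n \Omega^n/\gS_n$, Pinsker's inequality upgrades this to the total variation convergence $\sum_{n\geq0}\int|\bP^{(T)}_n-\bG_{\rho,n}|\to0$. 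In particular $\bP^{(T)}_n\weakto\bG_{\rho,n}$ tightly for each $n$, and the same Fatou argument as in Theorem~\ref{thm:existence} applied to $\tilde c_n\geq 0$ yields $\liminf_{T\to\infty}\bP^{(T)}(\bc)\geq \bG_\rho(\bc)$. Sandwiching this between $\cF_T(\rho)$ and $\bG_\rho(\bc)$ gives both $\cF_T(\rho)\to\bG_\rho(\bc)$ and $T\cH(\bP^{(T)},\bG_\rho)=\cF_T(\rho)-\bP^{(T)}(\bc)\to 0$.

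The main obstacle is the argument in part $(i)$ that the candidate limit $\bP^*$ still has full density $\rho$ and is a probability, i.e. that no mass escapes to $n=+\infty$. Super-stability is not assumed here, but the entropic bound replaces it: Lemma~\ref{lem:large_n_control} gives a tail estimate of the form $\sum_{n\geq M}n\,\bP^k_n(\Omega^n)\leq C/\log M$ uniformly in $k$, which is strong enough to pass to the limit in both the total probability constraint and the density constraint $\rho_{\bP^k}=\rho$. Once this compactness is in place, the rest of the proof is routine.
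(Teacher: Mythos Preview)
Your proposal is correct and follows essentially the same route as the paper's proof: in part~$(i)$ you use stability to bound $\cH(\bP^k,\bG_\rho)$, invoke Lemma~\ref{lem:large_n_control} to get the tail estimate $\sum_{n\geq M}\rho_{\bP^k_n}(\Omega)\leq C/\log M$ as a replacement for super-stability, and then argue exactly as in Theorem~\ref{thm:existence}; parts~$(ii)$ and~$(iii)$ proceed by the same test-state and sandwich arguments, with Pinsker's inequality for the total-variation convergence in~$(iii)$. The paper's write-up is essentially identical, including the explicit observation that~\eqref{eqn:estiss2} plays the role of~\eqref{eqn:estiss}.
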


We emphasize that for the existence of $\bP^{(T)}$ the cost $\bc=(c_n)_{n\geq0}$ is only assumed to be \emph{stable} and not \emph{superstable} as we required at $T=0$ in Theorem~\ref{thm:existence}. This is because the entropy gives us an additional control on the large-$n$ behavior due to Lemma~\ref{lem:large_n_control}.

\begin{proof}
From the stability $c_n\geq -A-Bn$, we have for a minimizing sequence $\bP^k=(\bP^k_n)_{n\geq0}$
$$\cF_T(\rho)+o(1)=\bP^k(\bc)+T\cH(\bP^k,\bG_{\rho})\geq -A-B\rho(\Omega)+T\cH(\bP^k,\bG_{\rho})$$
and therefore $\cH(\bP^k,\bG_{\rho})$ is bounded. By Lemma~\ref{lem:large_n_control} this tells us that
\begin{equation*}
\sum_{n\geq0} n\log(n)\, \bP_n^k(\Omega^n)=\sum_{n \geq 0} \log(n)\,\rho_{\bP_n^k}(\Omega)\leq C
\end{equation*}
and hence
\begin{equation}\label{eqn:estiss2}
\sum_{n \geq M} \rho_{\bP_n^k}(\Omega) \leq  \frac{C}{\log(M)}.
\end{equation}
From the proof of Theorem~\ref{thm:existence} we conclude that there exists a $\bP\in\PiGC(\rho)$ so that, up to extraction of a subsequence,
$\liminf_{k\to\ii}\bP^k(\bc)\geq \bP(\bc)$
and each $\bP^k_n$ converges narrowly to $\bP_n$. We use~\eqref{eqn:estiss2} as a replacement for~\eqref{eqn:estiss} which we had obtained from the superstability. The grand-canonical relative entropy is weakly lower semi-continuous for the narrow convergence,
$\liminf_{k\to\ii}\cH(\bP^k,\bG_\rho)\geq \cH(\bP,\bG_\rho)$
and therefore $\cH(\bP,\bG_\rho)<\ii$. We conclude that $\bP$ is a minimizer for $\cF_T(\rho)$. It is unique because $\bP\mapsto\cH(\bP,\bG_{\rho})$ is strictly convex.

Next we study its behavior when $T$ tends to 0 or infinity. We have the uniform bound $\cF_T(\rho)\geq\cF_0(\rho)$.
On the other hand, let $\bP$ be any state such that $\bP(\bc)$ and $\cH(\bP,\bG_{\rho})$ are both finite. Then we have
$\cF_T(\rho)\leq \bP(\bc)+T\cH(\bP,\bG_\rho).$
Taking first $T\to0$ gives
$\limsup_{T\to0}\cF_T(\rho)\leq  \bP(\bc).$
Optimizing over $\bP$ provides the missing upper bound. We conclude that
$$\lim_{T\to0^+}\bP^{(T)}(\bc)=\cF_0(\rho),\qquad \lim_{T\to0^+}T\,\cH(\bP^{(T)},\bG_\rho)=0.$$
If the cost is superstable, then $\bP^{(T)}$ is tight by the proof of Theorem~\ref{thm:existence} and converges up to a subsequence to some $\bP^*\in\PiGC(\rho)$ with $\cC(\rho)\leq \bP^*(\bc)\leq \cF_0(\rho)$. If $\cC(\rho)=\cF_0(\rho)$ then $\bP^*$ must be an optimizer for  $\cC(\rho)$. If $\cC(\rho)<\cF_0(\rho)$ but $\cH(\bP^*,\bG_\rho)<\ii$ then $\bP^*$ is an optimizer for $\cF_0(\rho)$.

For the limit $T\to\ii$, we simply use that
$$-A-B\rho(\Omega)+T\,\cH(\bP^{(T)},\bG_\rho)\leq \bP^{(T)}(\bc)+T\,\cH(\bP^{(T)},\bG_\rho)\leq \bG_\rho(\bc)$$
where the right side is finite by assumption. This proves that $T\cH(\bP^{(T)},\bG_\rho)$ is bounded, hence $\cH(\bP^{(T)},\bG_\rho)\to0$. Pinsker's inequality~\cite{Pinsker-64,Csiszar-67,Kullback-67}
\begin{equation}
\sum_{n\geq0}\int_{\Omega^n}|\bP_n-\bP'_n|\leq \sqrt{2\cH(\bP,\bP')}
 \label{eq:Pinsker}
\end{equation}
gives the convergence
\begin{equation}
\lim_{T\to\ii} \sum_{n\geq0}\int_{\Omega^n}|\bP^{(T)}_n-\bG_{\rho,_n}|=0.
 \label{eq:CV_L1}
\end{equation}
But then
$\liminf_{T\to\ii}\bP^{(T)}(\bc)\geq \bG_\rho(\bc)$
by the proof of Theorem~\ref{thm:existence}. Therefore $\bP^{(T)}(\bc)$ converges to $\bG_\rho(\bc)$ and $T\,\cH(\bP^{(T)},\bG_\rho)$ tends to 0. In particular,~\eqref{eq:CV_L1} is actually a $o(T^{-1/2})$.
\end{proof}

In most practical examples, minimizers for $\cC(\rho)$ will concentrate on small sets and they will therefore have an infinite relative entropy with respect to the Poisson state $\bG_\rho$. However, we expect that the equality $\cC(\rho)=\cF_0(\rho)$ will hold if these minimizers can be regularized so as to have a finite entropy without generating a too large error in the cost. For the harmonic cost in the canonical case, this was done for instance in~\cite{Leonard-12,CarDuvPeySch-17}. As an illustration, we provide an adaptation of~\cite{CarDuvPeySch-17} for a pairwise repulsive cost under appropriate assumptions on $c_2$ and $\rho$. We need the definition of the two-agent distribution $\rho^{(2)}_\bP$ on $\Omega\times\Omega$ which is given by
$$\rho_\bP^{(2)}(A_1\times A_2)=\sum_{n\geq2}\frac{n(n-1)}{2}\bP_n\big(A_1\times A_2\times(\Omega)^{n-2}\big)$$
and satisfies
$\bP(\bc)=\iint_{\Omega\times\Omega}c_2(x,y)\,{\rm d}\rho^{(2)}_\bP(x,y)$
for a pairwise cost.

\begin{theorem}[Limiting zero-temperature problem for locally bounded pairwise costs]
Let $\Omega\subset\R^d$ be any Borel set. Let $\bc=(c_n)_{n\geq0}$ be a pairwise cost as in~\eqref{eq:pairwise} with a non-negative lower semi-continuous cost $c_2$ on $\Omega\times\Omega$. Let $\rho$ be a finite non-negative measure on $\Omega$ such that $\cC(\rho)<\ii$ and
$\int \log(2+|x|)\,{\rm d}\rho(x)<\ii$.
 Assume in addition that $\cC(\rho)$ admits a minimizer $\bP$ such that
\begin{equation}
\sum_{n\geq0}n^2\,\bP_n(\Omega^n)<\ii.
\label{eq:cond_N2}
\end{equation}
If $c_2$ is uniformly continuous in a neighborhood of the support of $\rho_\bP^{(2)}$, then we have $\cF_0(\rho)=\cC(\rho)$.
\end{theorem}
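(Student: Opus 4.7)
The inequality $\cF_0(\rho)\geq\cC(\rho)$ is immediate from the definitions, since the admissible set in~\eqref{eq:liimt_zero_temp} is a subset of $\PiGC(\rho)$. The plan is to construct, starting from the given minimizer $\bP$, a recovery sequence of grand-canonical probabilities $\tilde{\bP}^\eps\in\PiGC(\rho)$ with $\cH(\tilde{\bP}^\eps,\bG_\rho)<\infty$ and $\tilde{\bP}^\eps(\bc)\to\bP(\bc)=\cC(\rho)$, which will yield $\cF_0(\rho)\leq\cC(\rho)$.

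The natural first step is Gaussian regularization. Fix a centred Gaussian mollifier $G_\eps$ on $\R^d$, extend everything by zero to $\R^d$ if needed, and set $\bP_n^\eps:=\bP_n\ast G_\eps^{\otimes n}$ for $n\geq1$, with $\bP_0^\eps:=\bP_0$. Each $\bP_n^\eps$ is absolutely continuous with smooth density $f_n^\eps$; the two-agent distribution is $\rho^{(2)}_{\bP^\eps}=\rho^{(2)}_\bP\ast(G_\eps\otimes G_\eps)$, supported in an $O(\eps)$-neighbourhood of $\supp\rho^{(2)}_\bP$. Uniform continuity of $c_2$ there, together with~\eqref{eq:cond_N2}, yields
\begin{equation*}
|\bP^\eps(\bc)-\bP(\bc)|\leq \omega_{c_2}(C\eps)\sum_{n\geq 2}\binom{n}{2}\bP_n(\Omega^n)\xrightarrow[\eps\to0]{} 0,
\end{equation*}
where $\omega_{c_2}$ is the modulus of continuity. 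A direct splitting of $\cH(\bP^\eps,\bG_{\rho\ast G_\eps})$ into $\rho(\Omega)$, the combinatorial piece $\sum_n\log(n!)\bP_n(\Omega^n)$ (finite since $\log(n!)\leq Cn^2$, using~\eqref{eq:cond_N2}) and the smooth relative-entropy terms $\int f_n^\eps\log\bigl(f_n^\eps/(\rho\ast G_\eps)^{\otimes n}\bigr)$ (controlled by the smoothing effect of $G_\eps$ and the bound $\int\log(2+|x|)\,\rd\rho<\infty$) shows this entropy to be finite.

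The delicate step is to enforce the exact density constraint, since $\rho_{\bP^\eps}=\rho\ast G_\eps\neq\rho$ and hence $\bP^\eps\notin\PiGC(\rho)$. The idea is to construct a Borel map $T_\eps\colon\R^d\to\R^d$ close to the identity with $(T_\eps)_\sharp(\rho\ast G_\eps)=\rho$, and then to push forward $\tilde{\bP}^\eps_n:=(T_\eps^{\otimes n})_\sharp\bP_n^\eps$. By construction $\tilde{\bP}^\eps\in\PiGC(\rho)$; $\cH(\tilde{\bP}^\eps,\bG_\rho)$ remains finite because pushforward by a near-identity map multiplies densities by a Jacobian bounded above and below; and uniform continuity of $c_2$ yields $\tilde{\bP}^\eps(\bc)\to\bP(\bc)$. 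The main obstacle is producing such a $T_\eps$ for a possibly singular $\rho$, which is not assumed regular in the hypotheses. For general $\rho$ I would proceed in two stages, first establishing the equality for the smoothed densities $\rho^\delta:=\rho\ast G_\delta$ via the scheme above, and then passing $\delta\to0$ using lower semi-continuity of $\cC$ (Theorem~\ref{thm:existence}) together with the continuity $\cC(\rho^\delta)\to\cC(\rho)$, which itself follows from the moment bound~\eqref{eq:cond_N2} and uniform continuity of $c_2$ on a neighbourhood of $\supp\rho^{(2)}_\bP$.
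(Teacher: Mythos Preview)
Your Gaussian regularisation idea is natural, but the density correction step is where the argument runs into a genuine obstacle, and the two-stage workaround you sketch does not close the gap. If $\rho$ is singular (which the hypotheses allow), there is in general no Borel map $T_\eps$ with $(T_\eps)_\sharp(\rho\ast G_\eps)=\rho$ whose Jacobian is bounded above and below; indeed, transporting an absolutely continuous measure onto one with atoms or a lower-dimensional part forces the Jacobian to blow up, which destroys your finite-entropy claim for $\tilde\bP^\eps$. Your fallback of first proving the result for $\rho^\delta=\rho\ast G_\delta$ and then letting $\delta\to0$ does not help either: what you need is a state in $\PiGC(\rho)$ with finite entropy relative to $\bG_\rho$ and cost close to $\cC(\rho)$, and knowing $\cF_0(\rho^\delta)=\cC(\rho^\delta)$ gives you states in $\PiGC(\rho^\delta)$ with finite entropy relative to $\bG_{\rho^\delta}$, which is a different reference measure over a different constraint set. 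Passing to the limit in $\delta$ would again require exactly the density-correction step you cannot perform.

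The paper's proof sidesteps this entirely by using a \emph{block approximation} rather than mollification. One covers $\R^d$ by cubes $C_z$ of side $h$ and replaces $\bP_n$ on each product cube $C_{z_1}\times\cdots\times C_{z_n}$ by the product measure $\rho_{|C_{z_1}}\otimes\cdots\otimes\rho_{|C_{z_n}}$, renormalised so that the mass on that cell is preserved. The key feature is that this construction has one-body marginal \emph{exactly} equal to $\rho$, so $\bP^h\in\PiGC(\rho)$ with no correction needed, and it is automatically absolutely continuous with respect to $\rho^{\otimes n}$, hence with respect to $\bG_{\rho,n}$. The logarithmic moment condition is then used to bound $-\sum_z\rho(C_z)\log\rho(C_z)$ (via comparison with a polynomially decaying reference sequence on $h\Z^d$), and the cost error is handled by uniform continuity of $c_2$ together with the finiteness of $\rho^{(2)}_\bP(\Omega\times\Omega)$, which is where~\eqref{eq:cond_N2} enters. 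The moral is that the right regularisation is one adapted to the target density $\rho$ from the start, not a translation-invariant smoothing followed by a transport correction.
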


The logarithm growth condition on $\rho$ will be used to control the negative part of the entropy of a regularization of $\rho$ a bit like in~\eqref{eq:entropy_gaussian}. The condition~\eqref{eq:cond_N2} that the second moment in $n$ of the minimizer $\bP$ is finite amounts to requiring  that $\rho^{(2)}_\bP\in L^1(\Omega\times\Omega)$. In Section~\ref{sec:support} we have given many examples for which minimizers all have a compact support in $n$, and in these cases~\eqref{eq:cond_N2} obviously holds. Finally, in Theorem~\ref{thm:support_asymp_doubling} we have seen that for an asymptotically doubling $c_2$ which diverges on the diagonal,  minimizers for $\cC(\rho)$ are all supported outside of the diagonal. This implies that $\rho^{(2)}$ concentrates on $\{|x-y|\geq\eps\}$ for some $\eps>0$. Thus, if the two-agent cost $c_2$ is regular outside of the diagonal, our last condition is satisfied. For instance, the theorem applies to costs of the form $c_2(x,y)=f(|x-y|)$ with $f$ positive and continuous on $(0,\ii)$ satisfying $f(r)\to+\ii$ when $r\to0^+$. This covers Riesz costs $f(r)=r^{-s}$, $s>0$, in any dimension $d\geq1$.

\begin{proof}
Let $\bP$ be a minimizer for $\cC(\rho)$, satisfying the properties of the statement. We follow~\cite{CarDuvPeySch-17} and cover our space with cubes $(C_z)_{z\in h\Z^d}$ of side length $h$, that is, $C_z=z+[-h/2,h/2)^d$. We then define the block approximation $\bP^h$ of $\bP$ by $\bP^h_0=\bP_0$ and
\begin{align}
\bP^h_n&:=\sum_{\substack{z_1,...,z_n\\ \bP_n(C_{z_1}\times\cdots\times C_{z_n})>0}}
\frac{\bP_n(C_{z_1}\times\cdots\times C_{z_n})}{\rho(C_{z_1})\cdots \rho(C_{z_n})}\rho_{|C_{z_1}}\otimes\cdots\otimes \rho_{|C_{z_n}}\nn\\
&=\rho^{\otimes n}\sum_{\substack{z_1,...,z_n\\ \bP_n(C_{z_1}\times\cdots\times C_{z_n})>0}}\frac{\bP_n(C_{z_1}\times\cdots\times C_{z_n})}{\rho(C_{z_1})\cdots \rho(C_{z_n})}\1_{C_{z_1}}\otimes\cdots\otimes \1_{C_{z_n}}
\label{eq:block_approx}
\end{align}
This is a symmetric measure, since $\bP_n(C_{z_1}\times\cdots\times C_{z_n})$ is symmetric with respect to exchanges of the indices. Note that
$$\rho(C_z)=\sum_{n\geq1}n\,\sum_{z_2,...,z_n}\bP_n(C_z\times C_{z_2}\times\cdots\times C_{z_n})\geq n\bP_n(C_z\times C_{z_2}\times\cdots\times C_{z_n})$$
and therefore none of the $\rho(C_z)$ vanishes in the denominator when $\bP_n(C_{z_1}\times\cdots\times C_{z_n})>0$. A simple calculation shows that
$\rho_{\bP^h}=\rho$ and therefore $\bP^h\in\PiGC(\rho)$. In addition, we have
\begin{equation}
 \rho^{(2)}_{\bP^h}=\rho^{\otimes 2}\sum_{z,z'}\rho_\bP^{(2)}(C_z\times C_{z'})\frac{\1_{C_z}\otimes \1_{C_{z'}}}{\rho(C_z)\rho(C_{z'})}.
 \label{eq:approx_two-particle_block}
\end{equation}
The relative entropy of the block approximation~\eqref{eq:block_approx} equals
\begin{multline*}
\cH(\bP^h,\bG_\rho)= \rho(\Omega)+\bP_0\log\big(\bP_0\big)\\
+\sum_{n\geq1}\sum_{\substack{z_1,...,z_n\\ \bP_n(C_{z_1}\times\cdots\times C_{z_n})>0}}\bP_n(C_{z_1}\times\cdots\times C_{z_n})\log\left(\frac{\bP_n(C_{z_1}\times\cdots\times C_{z_n})}{\rho(C_{z_1})\cdots \rho(C_{z_n})}\right)
\end{multline*}
and since $\log \bP_n(C_{z_1}\times\cdots\times C_{z_n})\leq0$ it can be estimated by
$$\cH(\bP^h,\bG_\rho)\leq \rho(\Omega)+\bP_0\log\big(\bP_0\big)-\sum_{z}\rho(C_z)\log\rho(C_z).$$
To estimate the last term we need some decay on the density $\rho$. We use that
$$\sum_{z}\rho(C_z)\log\left(\frac{N(h)\rho(C_z)}{h^d\rho(\Omega)}(1+|z|)^{d+1}\right)\geq0$$
(this is a relative entropy in $\ell^1$), where
$$N(h):=h^d\sum_{z\in h\Z^d}\frac{1}{(1+|z|)^{d+1}}\underset{h\to0}\sim \int_{\R^d}\frac{{\rm d}x}{(1+|x|)^{d+1}}.$$
Hence
\begin{align*}
-\sum_{z}\rho(C_z)\log\rho(C_z)\leq& \big(\log N(h)-d\log h-\log\rho(\Omega)\big)\rho(\Omega)\\
&\qquad +(d+1)\sum_{z}\rho(C_z)\log(1+|z|).
\end{align*}
For $h$ small enough $\log(1+|z|)\leq \log(2+|x|)$ for $x\in C_z$, and thus
$$\sum_{z}\rho(C_z)\log(1+|z|)\leq \int_\Omega \rho(x)\log(2+|x|)\,{\rm d}x.$$
Therefore $\cH(\bP^h,\bG_\rho)<\ii$. Next we estimate the cost
\begin{align}
\bP^h(\bc)&=\iint_{\Omega\times\Omega}c_2(x,y)\,{\rm d}\rho_{\bP^h}^{(2)}(x,y)\nn\\
&=\sum_{\rho_\bP^{(2)}(C_z\times C_{z'})>0}\frac{\rho_\bP^{(2)}(C_z\times C_{z'})}{\rho(C_z)\rho(C_{z'})}\iint_{C_z\times C_{z'}}c_2(x,y)\rho(x)\,\rho(y)\,{\rm d}x\,{\rm d}y.
\label{eq:cost_block}
\end{align}
The condition $\rho_\bP^{(2)}(C_z\times C_{z'})>0$ means that we work on ${\rm supp}(\rho^{(2)}_\bP)+C_0\times C_0$, a neighborhood of size $h$ of the support of $\rho^{(2)}_\bP$. This is where the cost was assumed to be uniformly continuous. We obtain
\begin{align}
&|\bP^h(\bc)-\bP(\bc)|\nn\\
&\qquad\leq  \sum_{z,z'}\frac{\rho_\bP^{(2)}(C_z\times C_{z'})}{\rho(C_z)\rho(C_{z'})}\iint_{C_z\times C_{z'}}|c_2(x,y)-c_2(z,z')|\,{\rm d}\rho(x)\,{\rm d}\rho(y)\nn\\
&\qquad\qquad+\sum_{z,z'}\iint_{C_z\times C_{z'}}|c_2(x,y)-c_2(z,z')|{\rm d}\rho^{(2)}_{\bP}(x,y)\nn\\
&\qquad \leq2\sum_{\rho_\bP^{(2)}(C_z\times C_{z'})>0}\|c_2-c_2(z,z')\|_{L^\ii(C_z\times C_{z'})}\;\rho^{(2)}_{\bP}(C_z\times C_{z'})\label{eq:estim_cost_block}\\
&\qquad\leq2\rho^{(2)}_{\bP}(\Omega\times\Omega)\;\sup_{\rho_\bP^{(2)}(C_z\times C_{z'})>0}\|c_2-c_2(z,z')\|_{L^\ii(C_z\times C_{z'})}.\nn
\end{align}
We have assumed that $\rho^{(2)}_\bP(\Omega\times\Omega)<\ii$ and the supremum is small under our assumption that $c_2$ is uniformly continuous in a neighborhood of the support of $\rho^{(2)}$.
\end{proof}

\begin{remark}
If $c_2$ grows then we need a further assumption on $\rho^{(2)}$ to control~\eqref{eq:estim_cost_block}. In the case of the harmonic cost $c_2(x,y)=|x-y|^2$, the error in~\eqref{eq:estim_cost_block} can simply be estimated by a constant times
$$h\int_{\Omega\times\Omega}(h+|x-y|)\,{\rm d}\rho^{(2)}_{\bP}(x,y)\leq C\Big(h^2\rho^{(2)}_\bP(\Omega\times\Omega)+\sqrt{h}\bP(\bc)\Big).$$
\end{remark}

\subsection{Duality}
Next we discuss the dual problem. Consider a measurable potential $\psi:\Omega\to\R$ such that
$\int_{\Omega}e^{\frac{\psi(x)}{T}}\,{\rm d}\rho(x)<\ii.$
With $\Psi_n=\sum_{j=1}^n\psi(x_j)$ we define
\begin{equation}
\boxed{F_{T,\rho}(\psi):=\inf_{\bP}\big\{\bP(\bc-\Psi)+T\cH(\bP,\bG_\rho)\big\}=-T\log Z_{T,\rho}(\psi)}
\label{eq:dual_temp}
\end{equation}
with the partition function
\begin{multline}
Z_{T,\rho}(\psi):=e^{-\frac{c_0}{T}-\rho(\Omega)}\\+\sum_{n\geq1}\frac{e^{-\rho(\Omega)}}{n!}\int_{\Omega^n}\exp\left(\frac{-c_n(x_1,...,x_n)+\sum_{j=1}^n\psi(x_j)}{T}\right){\rm d}\rho^{\otimes n}.
\end{multline}
Recalling that $c_n\geq -A-Bn$ we find
\begin{equation}
 Z_{T,\rho}(\psi)\leq \exp\left(\frac{A}T-\rho(\Omega)+e^{\frac{B}{T}}\int_\Omega e^{\frac{\psi}{T}}{\rm d}\rho\right)
 \label{eq:partition_stability}
\end{equation}
and since $Z_{T,\rho}(\psi)\geq e^{-c_0/T-\rho(\Omega)}$, we obtain
\begin{equation}
 -A+T\rho(\Omega)-Te^{\frac{B}{T}}\int_\Omega e^{\frac{\psi}{T}}{\rm d}\rho\leq F_{T,\rho}(\psi)\leq c_0+T\rho(\Omega).
 \label{eq:estim_stability_temp}
\end{equation}
The formula $F_{T,\rho}(\psi)=-T\log Z_{T,\rho}(\psi)$ is well known. The corresponding unique minimizer is the Gibbs state $\bP_\psi$ given by
\begin{equation}
\bP_{\psi,0}=\frac{e^{-\frac{c_0}{T}-\rho(\Omega)}}{Z_{T,\rho}(\psi)},\qquad \bP_{\psi,n}=\frac{e^{\frac{-c_n+\sum_{j=1}^n\psi(x_j)}{T}-\rho(\Omega)}\rho^{\otimes n}}{Z_{T,\rho}(\psi)n!}.
\label{eq:Gibbs_psi}
\end{equation}
To prove this claim, just notice that
$$\bP(\bc)+T\cH(\bP,\bG_\rho)-\bP_\psi(\bc)-T\cH(\bP_\psi,\bG_\rho)=T\cH(\bP,\bP_\psi)$$
is positive and vanishes only at $\bP_\psi$. Next we discuss the existence of a dual potential. The following is a small adaptation of results proved in~\cite{ChaChaLie-84,ChaCha-84}.

\begin{theorem}[Duality at $T>0$~\cite{ChaChaLie-84,ChaCha-84}]\label{thm:duality_temp}
Let $\Omega\subset\R^d$ be any Borel set. Let $\bc=(c_n)_{n \geq 0} $ be a stable family of lower semi-continuous costs with $c_1\in L^\ii(\Omega,\rd\rho)$. Let $\rho$ be a finite measure such that $\cF_T(\rho)<\ii$ for one (hence all) $T>0$. Then we have
\begin{equation}
\cF_T(\rho)=\sup_{\int_\Omega e^{\psi/T}{\rm d}\rho<\ii}\left\{\int_{\Omega}\psi\,{\rm d}\rho+F_T(\psi)\right\}.
\label{eq:duality_temp}
\end{equation}
If there exists $\eps>0$ such that for one (hence all) $T>0$
\begin{equation}
\cF_T\big((1+\eps)\rho\big)<\ii,
\label{eq:condition_dual_potential}
\end{equation}
then there exists a unique potential $\psi$ realizing the supremum in~\eqref{eq:duality_temp} and it is such that $\psi\in L^1(\Omega,{\rm d}\rho)$. The corresponding unique minimizer $\bP^{(T)}$ from Theorem~\ref{thm:existence_temp} is equal to the Gibbs state $\bP_\psi$ in~\eqref{eq:Gibbs_psi}.
\end{theorem}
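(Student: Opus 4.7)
I follow the approach of Chayes-Chayes~\cite{ChaCha-84}, adapted to our grand-canonical framework where the Poisson state $\bG_\rho$ serves as the natural reference measure and $\cH(\cdot,\bG_\rho)\geq 0$.

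\textbf{Step 1 (Easy direction).} For any $\psi$ admissible in the supremum and any $\bP\in\PiGC(\rho)$ with finite objective, the density constraint $\rho_\bP=\rho$ gives $\sum_n\int\Psi_n\,\rd\bP_n=\int_\Omega\psi\,\rd\rho$, so
\begin{equation*}
\bP(\bc)+T\cH(\bP,\bG_\rho) = \int_\Omega\psi\,\rd\rho + \bigl[\bP(\bc-\Psi)+T\cH(\bP,\bG_\rho)\bigr] \geq \int_\Omega\psi\,\rd\rho + F_{T,\rho}(\psi).
\end{equation*}
Taking the infimum over $\bP$ and the supremum over $\psi$ yields $\cF_T(\rho)\geq\sup_\psi\{\int\psi\,\rd\rho+F_{T,\rho}(\psi)\}$, which is one half of~\eqref{eq:duality_temp}.

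\textbf{Step 2 (Existence of a dual maximizer).} Under $\cF_T((1+\eps)\rho)<\infty$, I apply Step 1 at both $(1+\eps)\rho$ and $(1-\eps)\rho$ (the latter finite by convexity of $\cF_T$) to any admissible $\psi$:
\begin{equation*}
(1\pm\eps)\int_\Omega\psi\,\rd\rho + F_{T,(1\pm\eps)\rho}(\psi) \leq \cF_T((1\pm\eps)\rho).
\end{equation*}
Combined with the stability lower bound in~\eqref{eq:estim_stability_temp} applied at $(1\pm\eps)\rho$, these inequalities produce two-sided control of $\int\psi^k\,\rd\rho$ together with a uniform upper bound on $\int e^{\psi^k/T}\,\rd\rho$ along any maximizing sequence $\psi^k$. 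Since $\psi\mapsto Z_{T,\rho}(\psi)$ is convex (sum of exponentials), $\psi\mapsto\int\psi\,\rd\rho-T\log Z_{T,\rho}(\psi)$ is concave; a weak-limit extraction together with Fatou's lemma applied inside $Z_{T,\rho}$ then yields a maximizer $\psi\in L^1(\Omega,\rd\rho)$ satisfying $\int e^{\psi/T}\,\rd\rho<\infty$, and strict convexity of the partition function delivers uniqueness.

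\textbf{Step 3 (Reverse inequality and identification of $\bP^{(T)}$).} For any bounded perturbation $\eta$, a direct computation of the partition function gives
\begin{equation*}
\frac{d}{dt}\log Z_{T,\rho}(\psi+t\eta)\Big|_{t=0} = \frac{1}{T}\int_\Omega\eta\,\rd\rho_{\bP_\psi},
\end{equation*}
where $\bP_\psi$ is the Gibbs state~\eqref{eq:Gibbs_psi}. The first-order optimality condition at the maximizer $\psi$ reads $\int_\Omega\eta(\rd\rho-\rd\rho_{\bP_\psi})=0$ for every bounded $\eta$, forcing $\rho_{\bP_\psi}=\rho$ and hence $\bP_\psi\in\PiGC(\rho)$. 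Using $F_{T,\rho}(\psi)=-T\log Z_{T,\rho}(\psi)$ together with the explicit formula for $\bP_\psi$, a direct substitution gives
\begin{equation*}
\bP_\psi(\bc)+T\cH(\bP_\psi,\bG_\rho) = \int_\Omega\psi\,\rd\rho-T\log Z_{T,\rho}(\psi) = \int_\Omega\psi\,\rd\rho+F_{T,\rho}(\psi).
\end{equation*}
Together with Step 1, this yields equality in~\eqref{eq:duality_temp} and shows $\bP_\psi$ realizes $\cF_T(\rho)$; by the uniqueness in Theorem~\ref{thm:existence_temp} it coincides with $\bP^{(T)}$.

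\textbf{Main obstacle.} The delicate point is the compactness argument of Step 2: the maximizing sequence $\psi^k$ lives in an unconstrained, non-reflexive function space, and the functional degenerates both when $\psi^k\to-\infty$ on sets carrying $\rho$-mass and when $e^{\psi^k/T}$ concentrates too strongly. The two-sided interior condition $\cF_T((1\pm\eps)\rho)<\infty$ is precisely what controls both pathologies simultaneously; once the maximizer is constructed with the right integrability, the Euler-Lagrange identification of the Gibbs state is essentially an explicit calculation.
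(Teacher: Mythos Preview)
Your overall architecture matches the paper's, but there are two genuine gaps.

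\textbf{Gap 1: the duality identity without the interior condition.} The theorem asserts~\eqref{eq:duality_temp} for every $\rho$ with $\cF_T(\rho)<\infty$, \emph{before} assuming $\cF_T((1+\eps)\rho)<\infty$. Your Step~1 gives only $\cF_T(\rho)\geq\sup$, and you close the equality in Step~3 by exhibiting a maximizer, which already requires the interior condition. The paper handles the general equality differently, by invoking the lower semi-continuity proved in Theorem~\ref{thm:existence_temp} and abstract Fenchel duality (as in Theorem~\ref{thm:duality}); you need some such argument to cover the case where no maximizer exists.

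\textbf{Gap 2: the limit passage in Step 2.} Your a~priori bounds are essentially right (the paper obtains them slightly differently, using only the $(1+\eps)$ side together with the $n=0$ and $n=1$ terms of $Z_{T,\rho}$ rather than invoking $(1-\eps)\rho$), but the sentence ``a weak-limit extraction together with Fatou's lemma applied inside $Z_{T,\rho}$'' hides the real difficulty. With only $\psi^k\in L^1(\rd\rho)$ and $e^{\psi^k/T}\in L^1(\rd\rho)$ bounded, there is no weak compactness of $\psi^k$ in any useful topology. The paper extracts instead a weak $L^2$ limit of $f_k:=e^{\psi^k/(2T)}$, writes the limit as $e^{\psi/(2T)}$, and then faces two separate limits: (a) $\liminf Z_{T,\rho}(\psi^k)\geq Z_{T,\rho}(\psi)$, which does follow from weak convergence of tensor products and is what your ``Fatou inside $Z$'' handles; and (b) $\limsup\int\psi^k\,\rd\rho\leq\int\psi\,\rd\rho$. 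Point~(b) is \emph{not} Fatou---the inequality runs the wrong way, $\log$ is concave, and $\psi^k$ is unbounded below. This is precisely the content of the Chayes--Chayes--Lieb ``weak convergence of the logarithm'' lemma (Lemma~\ref{lem:Lieb} in the paper), proved by a truncation $\psi^k\mapsto\psi^k\1(|\psi|\leq M)$ combined with Jensen's inequality. Without this lemma your Step~2 does not go through: concavity of the dual functional and strict convexity of $Z$ are irrelevant to the compactness issue, and Fatou alone cannot control the linear term.
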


Here are some important remarks about this result.

\medskip

\noindent $(i)$ We have already encountered the condition~\eqref{eq:condition_dual_potential} in Theorems~\ref{thm:truncation} and~\ref{thm:duality}. Similar comments apply to $T>0$. If $\bP\in \PiGC(\rho)$ is any grand-canonical probability such that $\cH(\bP,\bG_\rho)<\ii$, we can introduce the same state $\bP_\eps$ as in~\eqref{eq:deformed_Chayes} for $0\leq\eps\leq\bP_0/(1-\bP_0)$ and we have
\begin{multline*}
\cH(\bP_\eps,\bG_\rho)=(1+\eps)\cH(\bP,\bG_\rho)-(\bP_0-\eps-\eps\bP_0)\log\big(1+\eps-\eps\bP_0^{-1}\big)\\-\eps\log(\bP_0e^{\rho(\Omega)})+(1+\eps)(1-\bP_0)\log(1+\eps),
\end{multline*}
which is therefore finite. Like for $T=0$ we deduce that $\cF_T(\eta\rho)$ is finite for every $0\leq \eta<\frac{1}{1-\bP_0}$. Since the Gibbs state~\eqref{eq:Gibbs_psi} manifestly satisfies  $\bP_{\psi,0}>0$, we conclude that the condition~\eqref{eq:condition_dual_potential} is in fact a \emph{necessary condition} for a density $\rho$ to arise from a Gibbs state. The argument shows that for any $\rho$ there exists a critical $\eta_\rho$ such that
$$\cF_T(\eta\rho)\begin{cases}<\ii&\text{for all $\eta<\eta_\rho$}\\
=+\ii&\text{for all $\eta>\eta_\rho$.}
\end{cases}$$
If $\cF_T(\eta_\rho\rho)<\ii$ then we must have $\bP^{(T)}_0=0$ and there cannot exist a dual potential for $\eta_\rho\rho$. It is in fact well known in optimal transport that the location of the density $\rho$ in the convex set $\{\rho\ :\ \cF_T(\rho)<\ii\}$ plays a role for the existence of dual potentials. See for instance~\cite{Leonard-01,Leonard-14} which uses the concept of `intrinsic core' similar to~\eqref{eq:condition_dual_potential}.

\medskip

\noindent $(ii)$ To understand the meaning of the condition~\eqref{eq:condition_dual_potential}, think of a family of costs so that the $(c_n)_{n\geq N_c}$ are very different from the $(c_n)_{n< N_c}$. Then we may really have $\cF_T(\eta\rho)=+\ii$ for $\eta$ large enough if the density is more adapted to the costs for $n<N_c$ than it is for $n\geq N_c$. For instance, take $c_n=+\ii$ for all $n\geq N_c$, which amounts to truncating the grand-canonical problem. Then it is clear that $\eta_\rho\leq N_c/\rho(\Omega)$.

\medskip

\noindent $(iii)$ In~\cite{ChaChaLie-84}, the condition~\eqref{eq:condition_dual_potential} was replaced by the stronger assumption that there exists $N>\rho(\Omega)$ such that $\rho^{\otimes N}(c_N)<\ii$. By considering the state
$$\bP_0=1-\eps,\qquad \bP_N=\frac{\eps}{\rho(\Omega)^N}\rho^{\otimes N},\qquad0\leq\eps\leq1,$$
we see that $\cF_T(\eta\rho)<\ii$ for all $0\leq \eta\leq N/\rho(\Omega)$ hence~\eqref{eq:condition_dual_potential} holds.

\medskip

\noindent $(iv)$ In~\cite{ChaCha-84}, the condition~\eqref{eq:condition_dual_potential} was not mentioned. But~\cite[Lem.~3.3]{ChaCha-84} (which contains the argument described after Theorem~\ref{thm:truncation} and in $(i)$) is wrong since there it was tacitly assumed that $\bP_0>0$. Thus the condition~\eqref{eq:condition_dual_potential} should in fact be present in~\cite{ChaCha-84}.

\begin{proof}
The duality~\eqref{eq:duality_temp} follows from the lower semi-continuity shown in the proof of Theorem~\ref{thm:existence_temp}, similarly as in the proof of Theorem~\ref{thm:duality}. We only discuss the existence of the dual potential $\psi$ following~\cite{ChaChaLie-84,ChaCha-84}. Let $(\psi_n)$ be a maximizing sequence for the right side of~\eqref{eq:duality_temp},
$$\lim_{n\to\ii} \left(-T\log Z_{T,\rho}(\psi_n)+\int_\Omega\psi_n{\rm d}\rho\right)=\cF_T(\rho).$$
Using~\eqref{eq:condition_dual_potential}, we have
\begin{align*}
&-T\log Z_{T,\rho}(\psi_n)+\int_\Omega\psi_n{\rm d}\rho\\
&\qquad\qquad=-T\log Z_{T,\rho}(\psi_n)+(1+\eps)\int_\Omega\psi_n{\rm d}\rho-\eps\int_\Omega\psi_n{\rm d}\rho \\
&\qquad\qquad\leq \sup_{\psi}\left\{-T\log Z_{T,\rho}(\psi)+(1+\eps)\int_\Omega\psi{\rm d}\rho\right\}-\eps\int_\Omega\psi_n{\rm d}\rho\\
&\qquad\qquad=\cF_T\big((1+\eps)\rho\big)-\eps\int_\Omega\psi_n{\rm d}\rho.
\end{align*}
Thus we have shown that
\begin{equation}
\int_\Omega\psi_n{\rm d}\rho\leq\frac{\cF_T\big((1+\eps)\rho\big)-\cF_T(\rho)}{\eps}+o(1)_{n\to\ii}
\label{eq:estim_Chayes}
\end{equation}
and $\int_\Omega\psi_n{\rm d}\rho$ is bounded from above.
On the other hand, retaining only the $n=1$ term in the sum we have
$$Z_{T,\rho}(\psi_n)\geq e^{-\rho(\Omega)}\int_\Omega e^{\frac{\psi_n-c_1}{T}}\,{\rm d\rho}\geq e^{-\rho(\Omega)-\frac{\|c_1\|_{L^\ii}}{T}}\int_\Omega e^{\frac{\psi_n}{T}}\,{\rm d\rho}.$$
Since
\begin{equation}
Z_{T,\rho}(\psi_n)=e^{-\frac{\cF_T(\rho)}T+\frac1T\int_\Omega \psi_n{\rm d}\rho+o(1)}
\label{eq:asymp_Z}
\end{equation}
this proves by~\eqref{eq:estim_Chayes} that $e^{\psi_n/T}$ is bounded in $L^1(\Omega,{\rm d}\rho)$. Since $e^{\psi_n/T}\geq 1+(\psi_n)_+/T$, this property immediately implies that $(\psi_n)_+$ is bounded in $L^1(\Omega,{\rm d}\rho)$. Finally, retaining only the term $n=0$ we have
$Z_{T,\rho}(\psi_n)\geq e^{-\frac{c_0}{T}-\rho(\Omega)}.$
Hence $Z_{T,\rho}(\psi_n)$ cannot tend to $0$ and we conclude again from~\eqref{eq:asymp_Z} that $\int_\Omega(\psi_n)_-\rho$ is bounded. We have proved that $(\psi_n)$ is bounded in $L^1(\Omega,{\rm d}\rho)$.
Now we rephrase a beautiful argument of~\cite{ChaChaLie-84} in the form of a lemma.

\begin{lemma}[Weak convergence of the logarithm~\cite{ChaChaLie-84}]\label{lem:Lieb}
Assume that $\rho$ is a finite measure on $\Omega$. Let $(f_n)$ be a sequence of positive functions which converges weakly in $L^2(\Omega,{\rm d}\rho)$ to some $f\geq0$. Write $f_n=e^{\psi_n}$ and $f=e^\psi$ and assume in addition that $(\psi_n)$ is bounded in  $L^1(\Omega,{\rm d}\rho)$. Then $\psi$ belongs to $L^1(\Omega,{\rm d}\rho)$ and we have
\begin{equation}
 \limsup_{n\to\ii}\int_\Omega \psi_n\,{\rm d}\rho\leq \int_\Omega \psi\,{\rm d}\rho.
 \label{eq:Lieb_limsup}
\end{equation}
\end{lemma}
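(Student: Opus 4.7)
The plan is to first establish that $\psi=\log f$ belongs to $L^1(\Omega,\rd\rho)$ (which implicitly forces $f>0$ $\rho$-almost everywhere), and then to prove the inequality~\eqref{eq:Lieb_limsup} by comparing each $\psi_n$ to the logarithm of a truncation $g_\eps=\max(f,\eps)$ of $f$ that is bounded away from zero. Concavity of $\log$ is the key device in both steps, used once in its ``midpoint'' form ($\log$ of a convex combination dominates the convex combination of logs) and once in its tangent-line form ($\log a\leq \log b+(a-b)/b$).

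For the integrability of $\psi$, I would first invoke Mazur's lemma to extract convex combinations
\begin{equation*}
g_m=\sum_{k} \lambda_k^{(m)}\, f_{n_k}
\end{equation*}
converging strongly in $L^2(\Omega,\rd\rho)$ to $f$, and hence, along a subsequence, $\rho$-a.e. Concavity of $\log$ gives pointwise $\log g_m\geq \sum_{k}\lambda_k^{(m)}\,\psi_{n_k}$, so that $(\log g_m)_-$ is uniformly bounded in $L^1(\rd\rho)$ by $\sup_n\|\psi_n\|_{L^1(\rd\rho)}$. On the other hand $(\log g_m)_+\leq g_m$ is bounded in $L^1(\rd\rho)$, since weak $L^2$ convergence on the finite measure space $(\Omega,\rho)$ bounds $(f_n)$, hence $(g_m)$, in $L^1(\rd\rho)$. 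Fatou's lemma applied separately to the non-negative sequences $(\log g_m)_\pm$, which converge $\rho$-a.e.\ to $(\log f)_\pm$, then yields $(\log f)_\pm\in L^1(\rd\rho)$; in particular $f>0$ $\rho$-a.e.\ and $\psi\in L^1(\Omega,\rd\rho)$.

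For the limsup inequality, concavity of $\log$ in tangent-line form gives, for any positive measurable $g$,
\begin{equation*}
\psi_n\leq \log g+\frac{f_n-g}{g}.
\end{equation*}
Choosing $g=g_\eps:=\max(f,\eps)$, the function $1/g_\eps$ is bounded by $1/\eps$ and hence belongs to $L^2(\rd\rho)$, so the weak convergence $f_n\wto f$ in $L^2(\rd\rho)$ yields
\begin{equation*}
\int_\Omega \frac{f_n-g_\eps}{g_\eps}\,\rd\rho\;\underset{n\to\infty}{\longrightarrow}\; \int_\Omega \frac{f-g_\eps}{g_\eps}\,\rd\rho,
\end{equation*}
whence
\begin{equation*}
\limsup_{n\to\infty}\int_\Omega \psi_n\,\rd\rho \leq \int_\Omega \log g_\eps\,\rd\rho+\int_\Omega \frac{f-g_\eps}{g_\eps}\,\rd\rho.
\end{equation*}
Letting $\eps\to 0^+$, monotone convergence (using $\psi\in L^1$ and the fact that $\log g_\eps$ decreases to $\log f$) gives $\int\log g_\eps\,\rd\rho\to \int\psi\,\rd\rho$, while the error term is supported on $\{f<\eps\}$ with integrand in $[-1,0]$ and thus tends to $0$ since $\rho(\{f=0\})=0$.

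The main obstacle is the first step: weak $L^2$ convergence provides no pointwise information, and $\psi\in L^1$ could fail either because $\{f=0\}$ has positive measure or because $(\log f)_-$ is not integrable. The Mazur-plus-concavity device is the essential ingredient, upgrading the weak $L^2$ bound into an a.e.\ control on $\log f$ via the monotone behavior of $\log$ on convex combinations; once $\psi\in L^1$ is established, the remainder of the argument is a standard truncation.
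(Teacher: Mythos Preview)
Your proof is correct and takes a genuinely different route from the paper's argument. The paper does not invoke Mazur's lemma; instead it uses Jensen's inequality with a \emph{two-sided} truncation $\psi^k:=\psi\,\1(|\psi|\leq k)$, obtaining
\[
\int_\Omega\psi_n\,\rd\rho-\int_\Omega\psi^k\,\rd\rho\;\leq\;\log\int_\Omega f_n\,e^{-\psi^k}\,\rd\rho,
\]
and then passes to the weak limit (since $e^{-\psi^k}\in L^\infty\subset L^2(\rd\rho)$). The resulting bound serves double duty: together with the trivial fact $\psi_+\in L^1$ (from $e^\psi=f\in L^1$) it first forces $\psi_-\in L^1$, and then letting $k\to\infty$ yields the limsup inequality. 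Your argument separates these two steps: Mazur plus midpoint concavity gives $\psi\in L^1$ independently, after which the tangent-line inequality with the one-sided truncation $g_\eps=\max(f,\eps)$ handles the limsup directly. The paper's approach is slightly more economical (one estimate does both jobs), while yours is perhaps conceptually cleaner in that the integrability of $\psi$ is settled before any limiting argument, and the tangent-line step is very explicit. Both proofs ultimately rest on the same two ingredients---concavity of $\log$ and the weak $L^2$ convergence tested against a bounded function built from a truncation of $f$---but deploy them in different orders and forms.
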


\begin{proof}[Proof of Lemma~\ref{lem:Lieb}]
Since $f$ is in $L^2(\Omega,{\rm d}\rho)$, it is also in $L^1(\Omega,{\rm d}\rho)$, and therefore $\int_\Omega e^\psi{\rm d}\rho$ is finite. This implies $\psi_+\in L^p(\Omega,{\rm d}\rho)$ for all $1\leq p<\ii$.
Following~\cite[Eq.~(2.21)--(2.28)]{ChaChaLie-84} we now introduce the truncated function $\psi^k:=\psi\1(|\psi|\leq k)$ and use Jensen's inequality in the form
$$ \int_\Omega\psi_n{\rm d}\rho-\int_\Omega\psi^k{\rm d}\rho\leq \log\left(\int_\Omega e^{\psi_n-\psi^k}{\rm d}\rho\right)=\log\left(\int_\Omega f_ne^{-\psi^k}{\rm d}\rho\right).$$
We pass to the limit $n\to\ii$, noticing that on the right side $e^{-\psi^k}$ is bounded, hence belongs to $L^2(\Omega,{\rm d}\rho)$. We find
\begin{equation}
  \limsup_{n\to\ii}\int_\Omega\psi_n{\rm d}\rho\leq \int_\Omega\psi^k{\rm d}\rho+\log\left(\int_\Omega e^{\psi-\psi^k}{\rm d}\rho\right).
 \label{eq:estim_k}
\end{equation}
Using that $e^{\psi-\psi^k}\leq 1+f\1(f\geq e^k)$ we find
$\int_\Omega e^{\psi-\psi^k}{\rm d}\rho\leq \rho(\Omega)+\int_\Omega f\,{\rm d}\rho$
which is independent of $k$. Since $\psi_+\in L^1(\Omega,{\rm d}\rho)$ and the limsup in~\eqref{eq:estim_k} is finite, this gives after passing to the limit $k\to\ii$ that $\psi_-\in L^1(\Omega,{\rm d}\rho)$. In fact, the logarithm on the right of~\eqref{eq:estim_k} converges to 0 by the monotone convergence theorem and we obtain~\eqref{eq:Lieb_limsup}.
\end{proof}

We apply the lemma to $f_n:=e^{\frac{\psi_n}{2T}}\wto f=e^{\frac{\psi}{2T}}$. We have
\begin{multline*}
\liminf_{n\to\ii}\int_{\Omega^N} \prod_{j=1}^N f_n(x_j)^2e^{-\frac{c_N(x_1,...,x_N)}{T}}{\rm d}\rho^{\otimes N}(x_1,...,x_N)\\
\geq \int_{\Omega^N} \prod_{j=1}^N f(x_j)^2e^{-\frac{c_N(x_1,...,x_N)}{T}}{\rm d}\rho^{\otimes N}(x_1,...,x_N)
\end{multline*}
since $(f_n)^{\otimes N}\wto f^{\otimes N}$ weakly in $L^2(\Omega^N,{\rm d}\rho^{\otimes N})$ and $e^{-c_N/T}\in L^\ii(\Omega^N,{\rm d}\rho^{\otimes N})$ from the stability of $\bc$. Thus
$\liminf_{n\to\ii} Z_{T,\rho}(\psi_n)\geq Z_{T,\rho}(\psi).$
By Lemma~\ref{lem:Lieb}, we obtain
$$\limsup_{n\to\ii}\left\{-T\log Z_{T,\rho}(\psi_n)+\int_\Omega \psi_n{\rm d}\rho\right\}\leq -T\log Z_{T,\rho}(\psi)+\int_\Omega \psi{\rm d}\rho$$
where $e^{\psi/T},\psi \in L^1(\Omega,{\rm d}\rho)$. Thus $\psi$ is a maximizer and this concludes the existence of an optimal potential. Computing the variations for $\psi+\eps\eta$ with $\eta\in L^\ii(\Omega,{\rm d}\rho)$ gives the equation that $\rho$ is the density of the Gibbs state $\bP_\psi$ in~\eqref{eq:Gibbs_psi}.
It is then a classical duality argument that $\bP_\psi$ minimizes the primal problem, hence is equal to $\bP^{(T)}$. This concludes the proof of Theorem~\ref{thm:existence_temp}.
\end{proof}

\bigskip

\noindent{\textbf{Acknowledgement.}} This project has received funding from the European Research Council (ERC) under the European Union's Horizon 2020 research and innovation programme (grant agreement MDFT No 725528 of M.L.), from the Agence Nationale de la Recherche (project MAGA ANR-16-CE40-0014 of L.N.), as well as from Ministero dell'Università e della Ricerca (PRIN project 202244A7YL of S.D.M. and MIUR Excellence Department Project 2023-2027 awarded to the DIMA of the University of Genova, CUP D33C23001110001), Air Force (AFOSR project FA8655-22-1-7034 of S.D.M.) and Istituto Nazionale di Alta Matematica (S.D.M. is a member of GNAMPA). S.D.M. wants to thank Lucia De Luca and Marcello Ponsiglione for suggesting ideas that led to the construction of Theorem \ref{thm:large_support_Coulomb}. The authors also thank the anonymous referee for useful comments.

\bigskip

\noindent{\textbf{Data statement.}} No datasets were generated or analyzed during the current study.


\end{document}